\documentclass[11pt, centertags,oneside]{amsart}

\usepackage{amsmath,amstext,amsthm,amscd,typearea,hyperref,stmaryrd}
\usepackage{amssymb}
\usepackage{a4wide}
\usepackage[mathscr]{eucal}
\usepackage{mathrsfs}
\usepackage{typearea}
\usepackage{charter}
\usepackage{pdfsync}
\usepackage{MnSymbol}
\usepackage{framed}
\usepackage[a4paper,width=16.2cm,top=3cm,bottom=3cm]{geometry}

\numberwithin{equation}{section}

\usepackage{xcolor}
\usepackage{tikz}

\newtheorem{theorem}{Theorem}[section]
\newtheorem{definition}[theorem]{Definition}
\newtheorem{proposition}[theorem]{Proposition}
\newtheorem{corollary}[theorem]{Corollary}
\newtheorem{lemma}[theorem]{Lemma}
\newtheorem{remark}[theorem]{Remark}

\newtheorem{example}[theorem]{Example}

\newcommand{\cali}[1]{\mathscr{#1}}

\newcommand{\supp}{{\rm supp}}

\newcommand{\dist}{{\rm dist}}

\newcommand{\loc}{{loc}}

\newcommand{\reg}{{\rm reg}}

\newcommand{\Cc}{\cali{C}}
\newcommand{\Dc}{\cali{D}}

\newcommand{\Fc}{\cali{F}}
\newcommand{\Gc}{\cali{G}}

\newcommand{\Kc}{\cali{K}}

\newcommand{\Uc}{\cali{U}}

\newcommand{\Sc}{\cali{S}}
\newcommand{\Tc}{\cali{T}}

\newcommand{\C}{\mathbb{C}}

\newcommand{\N}{\mathbb{N}}

\newcommand{\R}{\mathbb{R}}

\renewcommand{\P}{\mathbb{P}}

\newcommand{\Xf}{{\mathfrak{X}}}
\newcommand{\Xfh}{{\widehat{\mathfrak{X}}}}

\newcommand{\euc}{\mathrm{euc}}

\newcommand{\Uf}{{\mathfrak{U}}}

\newcommand{\Kmc}{{\mathcal{K}}}
\newcommand{\Tmc}{{\mathcal{T}}}

\newcommand{\Bmc}{{\mathcal{B}}}
\newcommand{\Cmc}{{\mathcal{C}}}

\newcommand{\bl}{\mathrm{bl}}

\title[Continuous local potential functionals]{Continuous local potential functionals and the Dinh-Sibony product}

\author{Taeyong Ahn}
\address{(Ahn) Department of Mathematics Education, Inha University, 100 Inha-ro, Michuhol-gu, Incheon 22212, Republic of Korea}%
\email{t.ahn@inha.ac.kr}

\date{\today}

\keywords{tangent current, Dinh-Sibony product, continuous local potential functional, compact K\"ahler manifold, continuous superpotential}

\begin{document}
\begin{abstract}
In this article, we generalize the notion of continuous superpotentials on compact K\"ahler manifolds to arbitrary complex manifolds in terms of local potential functionals and study related properties. In particular, we study the associativity of the Dinh-Sibony product and also a sufficient condition for the continuity of the Dinh-Sibony product.
\end{abstract}
\maketitle

\section{Introduction}

The intersection of positive closed currents has been one of the central problems in pluripotential theory. The bidegree $(1, 1)$-case has been studied using classical pluripotential theory (e.g., \cite{BT}, \cite{FS}, \cite{BEGZ}). For the general bidegree cases, two remarkable theories have been used: superpotentials (\cite{DS09}) and tangent currents (\cite{DS18}).\medskip

Superpotentials provided a reasonable theory in the case of complex projective spaces in \cite{DS09}. Superpotentials are applicable to compact K\"ahler manifolds but, due to the lack of positivity in certain senses, superpotentials have not given as successful a theory for compact K\"ahler manifolds as for complex projective spaces. (\cite{DS10}) Then, with more emphasis on positivity, the theory of tangent currents and the notion of the Dinh-Sibony product were introduced in \cite{DS18}. In \cite{Nguyen2501}, \cite{Ahn25}, reasonable conditions for the Dinh-Sibony product were introduced on compact K\"ahler manifolds and in \cite{Ahn25} on general complex manifolds. In \cite{VuMich}, \cite{DNV}, the theories of superpotentials and tangent currents were shown to give the same intersection in the case of continuous superpotentials. Also, for applications of continuous superpotentials in complex dynamics, see \cite{DS10} for instance. See also \cite{VU21-1}, \cite{VU21-2}, \cite{Vu}, \cite{NguyenPositive}, \cite{NT}, \cite{LZ} for other developments of tangent currents.\medskip

In this work, we further look into properties of tangent currents, in particular, the associativity properties of the Dinh-Sibony product. By the nature of the definition of tangent currents, unlike the binary intersection operation, for given three positive closed currents $S_1$, $S_2$, $S_3$, we can directly define the intersection $\left(S_1\wedge S_2\wedge S_3\right)_{DS}$ of $S_1$, $S_2$, $S_3$. Here, the subscript means the Dinh-Sibony product (see Definition \ref{def:DSproduct}). However, it is not obvious that $\left(S_1\wedge S_2\wedge S_3\right)_{DS}$ equals the iterated intersections $((S_1\wedge S_2)_{DS}\wedge S_3)_{DS}$ and $(S_1\wedge (S_2\wedge S_3)_{DS})_{DS}$ when they are defined.\medskip

The motivation of this work is to prove this consistency under some regularity assumptions on currents. To this end, we extend the notion of continuous superpotentials in the context of tangent currents and prove the associativity property of the Dinh-Sibony product under the assumption of continuity. The main theorem is as follows:\medskip

Let $X$ be a complex manifold of dimension $n$. For $i=1, 2, 3$, let $S_i$ be a positive closed $(s_i, s_i)$-current, where $1\le s_1+s_2+s_3\le n$. Let $\left(\left(U_j^i, \xi_j^i\right)\right)_{i=0, \ldots, n, j\in J}$ be localizing data (see Subsection \ref{subsec:local_pot_supftns}).  For each $j\in J$, we consider 
$K_j=\{R\in\Cc_{n-p+1}(U_1^1; U_j^0): \|R\|_{*, j}\le 1\}$ (see Subsection \ref{subsec:positive_currents}). For local potential functionals, refer to \ref{subsec:local_pot_supftns} and for continuous ones, Section \ref{sec:conti}. 
\begin{theorem}\label{thm:associativity}
	Suppose that $S_1$ admits continuous local potential functionals on $\left(K_j\right)_{j\in J}$ and that $S_2$ and $S_3$ satisfy Condition (I) on $U_j^0$ for $j\in J$. Then, the Dinh-Sibony product $\left((S_1\wedge S_2)_{DS}\wedge S_3\right)_{DS}$ of $\big(S_1\wedge S_2\big)_{DS}$ and $S_3$, and the Dinh-Sibony product $(S_1\wedge S_2\wedge S_3)_{DS}$ of $S_1$, $S_2$ and $S_3$ are well defined for each $j\in J$, and we have
	\begin{align*}
		\left(S_1\wedge (S_2\wedge S_3)_{DS}\right)_{DS}=\left((S_1\wedge S_2)_{DS}\wedge S_3\right)_{DS}=(S_1\wedge S_2\wedge S_3)_{DS}\textrm{ on }X.
	\end{align*} 
\end{theorem}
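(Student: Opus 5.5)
The strategy is to reduce the triple and iterated Dinh--Sibony products to a statement about local potential functionals on the sets $K_j$. Both sides can then be computed by pairing with test forms supported in the charts $U_j^0$, using the localizing data. Throughout, I use the standard fact that the Dinh--Sibony product, when it exists, is local: it is determined by its restrictions to each $U_j^0$. So it suffices to prove the three equalities on each $U_j^0$ and then glue. On $U_j^0$ the charts $\xi_j^i$ identify everything with an open subset of $\C^n$, and the tangent currents along the diagonal are computed by the dilations $A_\lambda$ in the normal direction.

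The first step is existence. By Condition (I), the product $(S_2\wedge S_3)_{DS}$ exists on $U_j^0$ and has a unique tangent current with minimal horizontal dimension. I would show that continuity of the local potential functional of $S_1$ on $K_j$ gives a unique tangent current for $S_1\otimes R$ along the diagonal, uniformly in $R\in K_j$. The argument goes as follows:
\begin{itemize}
\item Write the test pairing $\langle (A_\lambda)_*(S_1\otimes R)\wedge \text{cut-off},\Phi\rangle$ as the value of the potential functional of $S_1$ evaluated at a current built from $R$ and $\Phi$.
\item Show that this current lies in a fixed multiple of $K_j$ and converges as $\lambda\to\infty$ in the topology on $\Cc_{n-p+1}(U_1^1;U_j^0)$.
\item Continuity of the functional then gives convergence of the pairing, with limit $\mathcal U_{S_1}(R\wedge[\text{diag limit}])$-type expressions. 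This yields both the existence of the tangent current and the identification of the h-dimension.
\end{itemize}
Applying this to $R=(S_2\wedge S_3)_{DS}$ (normalized into $K_j$) gives that $(S_1\wedge(S_2\wedge S_3)_{DS})_{DS}$ is defined. Applying it to $R=S_2$ and then using Condition (I) for $S_3$ gives that $(S_1\wedge S_2)_{DS}$ is defined. I would also check that $(S_1\wedge S_2)_{DS}$ itself has continuous local potential functionals, or at least that Condition (I) with $S_3$ holds; I expect this to follow from the explicit formula $\mathcal U_{(S_1\wedge S_2)}(R)=\mathcal U_{S_1}((S_2\wedge R)_{DS})$.

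The second step is the identification. The key is to approximate $S_2$ and $S_3$ by smooth positive closed forms $S_2^\epsilon, S_3^\epsilon$ on $U_j^0$, via local convolution in the charts, with currents of masses bounded so that all intermediate objects stay in a multiple of $K_j$. For smooth $S_2,S_3$ all three products coincide with $S_1\wedge S_2^\epsilon\wedge S_3^\epsilon$; this is classical, since tangent currents along a submanifold with one smooth factor reduce to the wedge product. Continuity of the functional of $S_1$ then gives
\[
S_1\wedge S_2^\epsilon\wedge S_3^\epsilon\to (S_1\wedge(S_2\wedge S_3)_{DS})_{DS},
\]
provided that $(S_2^\epsilon\wedge S_3^\epsilon)\to (S_2\wedge S_3)_{DS}$ in the topology of $K_j$. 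That convergence should be supplied by Condition (I). Similarly, the triple product $(S_1\wedge S_2\wedge S_3)_{DS}$ can be computed by the diagonal in $X^3$. Its dilation-limit factorizes by first taking the limit in the $(S_2,S_3)$ directions, which is controlled by Condition (I), and then in the $S_1$ direction, which is controlled by continuity. This gives the same limit. The middle product $((S_1\wedge S_2)_{DS}\wedge S_3)_{DS}$ is handled symmetrically, using the formula for its potential functional above.

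The main obstacle is the convergence in the second step. Condition (I) presumably gives weak convergence, and perhaps mass control, for the regularized products. Continuity of the functional, however, is only with respect to the topology on $K_j$ defined by $\|\cdot\|_{*,j}$, so weak convergence alone may not be enough. I would first try to show that the regularizations are uniformly bounded in $\|\cdot\|_{*,j}$ and converge weakly. I would then use that on $\|\cdot\|_{*,j}$-bounded sets the relevant topology coincides with weak convergence, which is presumably how continuity is defined in Section~\ref{sec:conti}. Independence of the choice of regularization then yields all three equalities on $U_j^0$, and hence on $X$.
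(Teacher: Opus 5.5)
\textbf{There is a genuine gap in the existence part, which is the core of the theorem.} The products $((S_1\wedge S_2)_{DS}\wedge S_3)_{DS}$ and $(S_1\wedge S_2\wedge S_3)_{DS}$ are defined only if two tangent currents are unique with minimal $h$-dimension: that of $(\pi_2^{23})^*(S_1\wedge S_2)_{DS}\wedge(\pi_3^{23})^*S_3$ along $\Delta_{23}$, and that of $\pi_1^*S_1\wedge\pi_2^*S_2\wedge\pi_3^*S_3$ along the small diagonal of $X^3$.

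\begin{itemize}
\item Showing that one particular family $S_1\wedge S_2^\epsilon\wedge S_3^\epsilon$ converges says nothing about this uniqueness. Your ``independence of the regularization'' also presupposes that these products exist and are continuous along your approximations. The Dinh--Sibony product is not continuous in general, which is why Theorem~\ref{thm:coni_DS_prod} needs the uniform hypothesis $\mu_j(\varepsilon)\to 0$.
\item Your fix for the middle product does not work. $(S_1\wedge S_2)_{DS}$ need not admit continuous local potential functionals: if $S_1$ is smooth, then $(S_1\wedge S_2)_{DS}=S_1\wedge S_2$ carries all the singularities of $S_2$, on which nothing is assumed.
\item The identity $\mathcal{U}_{(S_1\wedge S_2)}(R)=\mathcal{U}_{S_1}((S_2\wedge R)_{DS})$ is a cohomological identity from superpotential theory. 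It has no counterpart for the functionals $\Fc^i_{S,j}$, which are integrals against the local kernel $\chi_j^iu(dd^cu)^{i-1}$ on $U_j^0\times U_j^0$.
\item The fallback ``Condition (I) for $(S_1\wedge S_2)_{DS}$ and $S_3$'' is essentially what has to be proved, and you give no mechanism for it.
\item For the triple product, the claim that the $A_\lambda$-limit on $X^3$ factorizes into an $(S_2,S_3)$-limit followed by an $S_1$-limit is an interchange of limits. It needs uniform estimates that you do not supply.
\item The convergence $S_2^\epsilon\wedge S_3^\epsilon\to(S_2\wedge S_3)_{DS}$ in the $*$-topology is asserted, not proved.
\end{itemize}

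\textbf{What is missing is a way to evaluate the functional of $S_1$ directly on currents built from $S_2$ and $S_3$.} The paper does this as follows.

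\begin{itemize}
\item It writes $(S_1\wedge S_2)_{DS}$ as the limit of $(\pi_2^{12})_*[\Kmc^n_{u,\eta}\wedge(\pi_1^{12})^*S_1]\wedge S_2$. The masses near $\Delta_{23}$ then become integrals on $\Xf_{12}$ pairing $S_1$ with $S_2\wedge(\pi_2^{23})_*[(\pi_3^{23})^*S_3\wedge dd^cv^\Tmc_\theta\wedge\Kmc^{i-1}_{v,\theta'}\wedge\omega_{23}^{\,n-i+1}]$.
\item This transform is positive but not closed. Subtracting a smooth piece supported away from $\Delta_{23}$ and adding $c\,|\theta_0|^{-2i}\omega_\euc^{\,s_3+1}$ puts it into $\Cc_{s_2+s_3+1}(U_j^1;U_j^0)$ with uniformly bounded mass (Lemma~\ref{lem:DSHness}). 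This is exactly why $K_j$ consists of currents closed only in $U_j^1$.
\item The mass near $\Delta_{23}$ is then killed by splitting the integral into two parts. Near $\Delta_{12}$ it is controlled by $\nu^n_{S_1,j}(\delta)$ via Theorem~\ref{thm:main_char}. Away from $\Delta_{12}$ it is controlled by Condition (I) for $S_2,S_3$. The $d\chi^v_k\wedge d^c\chi^v_k$ terms are handled by the closed correction $\Xi_k$ and Cauchy--Schwarz (Lemmas~\ref{lem:vanish}--\ref{lem:hard_part}).
\item The shadows are identified through the explicit formula of \cite{Ahn25} together with Proposition~\ref{prop:uniform_classical_prod}.
\item The triple product is treated by analogous estimates with the kernels $\pi_{12}^*(\cdot)\wedge\pi_{23}^*(\cdot)$. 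There the uniform convergence of Proposition~\ref{prop:unif_conv_poten_sup} justifies the interchange of limits.
\end{itemize}

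Your regularization scheme would need a substitute for all of these estimates.
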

For Condition (I), see Definition \ref{def:conditionI}. Intuitively, not requiring closedness outside $\overline{U_j^1}$ for currents in $\Cc_{n-p+1}(U_j^1; U_j^0)$ is for the localization. Local potential functionals were introduced in \cite{Ahn25} and they are functions defined on each $K_j$. The definition actually resembles superpotentials. However, compared with superpotentials, we cannot use cohomological arguments. As will be seen in Proposition \ref{prop:char_conti} and Theorem \ref{thm:cptK}, globally, it plays a similar role to superpotentials.\medskip 

Also, following \cite{DNV}, we characterize the continuity of local potential functions in terms of integration near the diagonal submanifold as in Proposition \ref{prop:char_conti}. Indeed, this characterization plays a crucial role in this work.\medskip

When $X$ is a compact K\"ahler manifold, we can describe the continuity of superpotentials in terms of local potential functionals as below:
\begin{theorem}\label{thm:cptK}
	Let $(X, \omega_X)$ be a compact K\"ahler manifold of dimension $n$.  Let $S\in\Cc_p(X)$. Then, $S$ admits continuous superpotentials if and only if $S$ admits continuous local potential functionals on $K$, where $K$ is the closure of the set of smooth positive closed $(n-p+1, n-p+1)$-currents of unit mass.
\end{theorem}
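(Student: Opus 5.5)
We will prove Theorem \ref{thm:cptK} by comparing the superpotential of $S$ with the local potential functionals of $S$ on one and the same compact set $K$. Recall how the superpotential is built. Choose a smooth closed form $\alpha$ in the class of $S$ and write $S-\alpha=dd^cU_S$ with a quasi-potential $U_S$. For smooth $R\in K$, write $R-\beta_R=dd^cU_R$, where $\beta_R$ is a smooth representative of the class of $R$. The superpotential is then
\[
\Uc_S(R)=\langle U_S,R\rangle=\langle S-\alpha,U_R\rangle.
\]
Dinh--Sibony's theory states that $\Uc_S$ is continuous on $K$ precisely when it extends continuously from the smooth elements. Here $U_R$ can be chosen through a kernel $U_R=(\pi_2)_*(\pi_1^*R\wedge H)$, where $H$ is a quasi-potential of the diagonal $\Delta\subset X\times X$ with singularity $\log\dist(\cdot,\Delta)$ along $\Delta$.

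\textbf{Step 1: describe the local potential functionals.} Using the localizing data, we write the local potential functionals of $S$ on $K$ as pairings of $S$ with local potentials of $R$. These local potentials are obtained from the local Bochner--Martinelli-type kernel on the charts $U_j^0$. Concretely, each functional is the integral of $S\otimes R$ against a kernel on $U_j^0\times U_j^0$ that is singular along the diagonal, plus correction terms that are smooth in $R$ and localized by cut-offs.

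\textbf{Step 2: compare the two functionals.} The global kernel $H$ and the local kernels differ, near the diagonal, by a kernel that is smooth or at least continuous. This uses the standard construction of $H$ via a blow-up of $\Delta$, or a Green form whose leading term is locally $\log|x-y|$ times a closed form. Away from the diagonal both kernels are smooth. Hence, for each $j$,
\[
\Uc_S(R)-\big(\text{local potential functional of }S\text{ at }R\big)=\langle S\otimes R,\,\Phi_j\rangle+\text{(terms depending only on cohomology classes)},
\]
with $\Phi_j$ continuous on $X\times X$, up to cut-off functions. The cohomological terms are constant on $K$, because all elements of $K$ have unit mass and the classes are fixed. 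The map $R\mapsto\langle S\otimes R,\Phi_j\rangle$ is continuous on $K$ for the weak topology, since $K$ has bounded mass and $\Phi_j$ is continuous. Therefore continuity of one functional on $K$ is equivalent to continuity of the other.

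\textbf{Step 3: alternative route through the integral characterization.} If the kernel comparison turns out to be delicate, we would instead use Proposition \ref{prop:char_conti}, which characterizes continuity of local potential functionals in terms of integration near the diagonal. The matching statement for superpotentials on compact Kähler manifolds is given in \cite{DNV}, where continuity is equivalent to uniform smallness of $\int_{\dist(x,y)<r}$ of $S\otimes R$ against the kernel as $r\to0$, uniformly in $R\in K$. These two near-diagonal conditions coincide up to smooth terms, which gives both implications.

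\textbf{Main obstacle.} We expect the hardest step to be justifying that the difference kernel $\Phi_j$ is genuinely continuous rather than merely bounded. This requires a careful local expansion of the Green quasi-potential of $\Delta$ on a Kähler manifold. We would attempt it in blow-up coordinates, where both kernels are $\log$ of the exceptional-divisor defining function times smooth forms that agree on the exceptional divisor.
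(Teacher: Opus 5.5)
Your Steps~1--2 have a genuine gap: they misidentify what has to be continuous. In this paper, $S$ admits continuous local potential functionals on $K$ only if \emph{all} the functionals $\Fc^i_{S,j}(R)=\int\chi^i_j\,u\,(dd^cu)^{i-1}\wedge\pi_1^*S\wedge\pi_2^*R\wedge\omega^{n-i}$, for $i=1,\ldots,n$ and $j\in J$, extend continuously to $K$.

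Only $\Fc^n_{S,j}$ resembles a pairing of $S$ with a local (Bochner--Martinelli type) potential of $R$. For $i<n$ there is no identity $\Uc_S(R)-\Fc^i_{S,j}(R)=\langle S\otimes R,\Phi_j\rangle+\mathrm{const}$. So your argument gives no route from continuous superpotentials to continuity of $\Fc^i_{S,j}$ for $i<n$.

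The continuity of $\Phi_j$ also fails in general. In blow-up coordinates the global kernel is $\hat u$ times a smooth form on $\Xfh$. No single global form can agree along $\widehat\Delta$ with $\pi^*(dd^cu)^{n-1}$ in every chart, because $dd^c\log|x-y|$ changes under a change of coordinates by a smooth form that in general does not vanish on $\widehat\Delta$. For the natural choice $\widehat\Omega^{n-1}$ there are also mixed components $\pi^*\big((dd^cu)^k\wedge\omega^{n-1-k}\big)$, $k<n-1$, which do not vanish there. Hence $\Phi_j$ keeps logarithmic singularities of exactly the type of the lower functionals. Controlling $\langle S\otimes R,\Phi_j\rangle$ therefore needs near-diagonal smallness for $\Fc^1_{S,j},\ldots,\Fc^{n-1}_{S,j}$, which is part of what must be proved. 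At best Step~2 gives the implication from local to global continuity. (Minor: elements of $K$ do not share a cohomology class, since unit mass only fixes $\int R\wedge\omega_X^{p-1}$. The cohomological terms are continuous rather than constant, which would still suffice.)

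Your Step~3 is the route the paper takes, but it misses the key point, and ``the two conditions coincide up to smooth terms'' is not the mechanism. The criterion of \cite[Proposition 2.7]{DNV} is $\lim_{\delta\to0}\nu^X_S(\delta)=0$, where $\nu^X_S(\delta)=\sup_{R\in\widetilde K}\int_{\widehat\Delta_\delta}-\hat u\,\widehat\Omega^{n-1}\wedge\Pi_1^*S\wedge\Pi_2^*R$ and $\widehat\Omega$ is a K\"ahler form on the blow-up $\Xfh$. It is not a pairing with a Green kernel.

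The paper computes in blow-up charts that:
\begin{itemize}
\item $\widehat\Omega^{n-1}$ is bounded above and below by constant multiples of $\pi^*\big(\sum_{k=0}^{n-1}(dd^cu)^k\wedge\omega^{n-1-k}\big)$, using that $\pi^*dd^cu$ is smooth and semipositive on $\Xfh$;
\item $\dist(\cdot,\widehat\Delta)$ and $\dist(\pi(\cdot),\Delta)$ are comparable;
\item $\hat u-\log\dist(\cdot,\widehat\Delta)$ is bounded.
\end{itemize}
Every integrand has a fixed sign, and smooth $R$ charges neither $\Delta$ nor $\widehat\Delta$. So for small $\delta$ one gets the two-sided domination $\nu^i_{S,j}(\delta)\le C\,\nu^X_S(c\delta)$ and $\nu^X_S(\delta)\le C\sum_{i,j}\nu^i_{S,j}(c\delta)$, using that $J$ is finite and $\chi^i_j\equiv1$ near $\overline{\Uf^i_j}$. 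Proposition~\ref{prop:char_conti} then turns this into the equivalence. The splitting of $\widehat\Omega^{n-1}$ into the terms $(dd^cu)^k\wedge\omega^{n-1-k}$ is exactly why all $n$ functionals $\Fc^{k+1}_{S,j}$ appear, and your proposal does not account for this.
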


In this sense, the notion of continuous local potential functionals generalizes that of the continuous superpotentials. Furthermore, in the case of compact K\"ahler manifolds, we improve the associativity property as below.
\begin{theorem}\label{thm:associativity_cptK}
	Let $(X, \omega_X)$ be a compact K\"ahler manifold of dimension $n$. Let $S_1$, $S_2$ and $S_3$ be positive closed currents of bidegree $(s_1, s_1)$, $(s_2, s_2)$ and $(s_3, s_3)$, respectively. Suppose that $S_1$ admits continuous superpotential and $S_2$ and $S_3$ satisfy Condition (I). Then, the Dinh-Sibony product of $\big(S_1\wedge S_2\big)_{DS}$ and $S_3$, and the Dinh-Sibony product of $S_1$, $S_2$ and $S_3$ are well defined for each $j\in J$, and we have
	\begin{align*}
		\left(S_1\wedge (S_2\wedge S_3)_{DS}\right)_{DS}=\left((S_1\wedge S_2)_{DS}\wedge S_3\right)_{DS}=(S_1\wedge S_2\wedge S_3)_{DS}\textrm{ on }X.
	\end{align*} 
\end{theorem}

Besides the associativity, as in \cite{DNV}, domination priciple works for continuous local potential functionals (see Theorem \ref{thm:domination}). Not directly related to associativity, with the same idea, one can find a sufficient condition for the continuity of the Dinh-Sibony product as in Theorem \ref{thm:coni_DS_prod}.\medskip

The main technical difficulty of this work comes from the presence of the boundary and the comparison of local quantities and global quantities. The superpotentials are functions defined on positive closed currents. Positive closed currents are a global object and keeping closedness when localizing is not an easy task. However, in \cite{DNV}, the continuity of superpotentials was described in terms of integration near the exceptional divisor. Due to the positivity of currents under consideration, integration works very will with localization. So, as explained in the above, we exploit the description of the continuous superpotentials in \cite{DNV} and generalize this property. In connection with continuity, uniform convergence for continuous local potential functionals is also important. Another idea is that instead of positive closed currents, we work with positive currents on $U_j^0$ which are closed in a relatively compact open subset $U_j^1$. Intuitively, this is for the localization of the form $\chi S$ and for regularizations of positive closed currents, where $S$ is a positive closed current and $\chi$ is a cut-off function. For a technical local regularization, see Lemma \ref{lem:DSHness}. Note that for this classes of currents, compactness property still holds as in Proposition \ref{prop:compactness}.\medskip

The paper is organized as follows. In Section \ref{sec:prelimimaries}, preliminaries on tangent currents, the Dinh-Sibony product and local potential functionals are discussed. In Section \ref{sec:conti}, we introduce continuous local potential functions. In Section \ref{sec:associativity}, associativity of the Dinh-Sibony product under the continuity of local potential functionals is proved. Theorem \ref{thm:associativity} is proved. In Section \ref{sec:cptK}, the compact K\"ahler manifold case is discussed. Theorems \ref{thm:cptK} and \ref{thm:associativity_cptK} are proved. In Section \ref{sec:conti_DS}, the continuity of the Dinh-Sibony product is briefly discussed.\medskip

\noindent {\bf Notations. } Let $\chi: \R\to \R_{\ge 0}$ be a fixed convex increasing function such that $\chi(t)=0$ for $t\le -1$ and $\chi(t) = t$ for $t\ge 1$. For a set $W$ in a metric space, $W_\varepsilon$ denotes the $\varepsilon$-neighborhood of $W$ with respect to the given metric. We use $\omega_\euc$ for the standard Euclidean K\"ahler form on $\C^n$. For a complex manifold $\Xf$, we denote by $\Cc_p(\Xf)$ the set of positive closed currents and $\widetilde{\Cc_p(\Xf)}$ its subset of smooth ones. Let $\Dc_p(\Xf)$ denote the real vector space spanned by $\Cc_p(\Xf)$.
\medskip

\noindent
\textbf{Acknowledgments.} This work is dedicated to Professor Kang-Tae Kim, in celebration of his outstanding contributions. The research of the author was supported in part by the National Research Foundation of Korea (NRF) grant funded by the Korea government (MSIT) (No. RS-2023-00250685).

\section{Preliminaries}\label{sec:prelimimaries}

\subsection{Tangent currents} The tangent current was introduced in \cite{DS18} by Dinh-Sibony and further studied in \cite{Vu}, \cite{NguyenPositive}, \cite{Nguyen2501} and \cite{Ahn25}.\medskip

Let $\Xf$ be a complex manifold of dimension $N$ and $V\subset \Xf$ a complex submanifold of codimension $n$. Let $T\in\Cc_p(\Xf)$ be such that $T$ has no mass on $V$. Let $E$ be the normal bundle of $V$ in $\Xf$ and $\overline E:=\P(E\oplus \C)$ the projective compactification of $E$. Let $\pi_V:\overline{E}\to V$ denote the canonical projection. The hypersurface at infinity $H_\infty:=\overline E\setminus E$ of $\overline E$ is isomorphic to $\P(E)$ as a fiber bundle over $V$. We have another canonical projection $\pi_\infty:\overline{E}\setminus V\to H_\infty$.
\begin{definition}
	Let $U$ be an open subset of $\Xf$ with $U\cap V\ne \emptyset$. A (local) holomorphic admissible map is a biholomorphism $\tau$ from $U$ to an open neighborhood of $U\cap V$ in $E$, which is the identity on $U\cap V$, and the restriction of whose differential $d\tau$ to ${U\cap V}$ is the identity. 
\end{definition}
Here, ``local'' means that it may not be defined in a global neighborhood of the entire submanifold $V$ in $X$ but is defined in the neighborhood $U$ of $U\cap V$. The admissible maps in \cite{DS18} are global but may not be holomorphic.\medskip 

For $\lambda\in\C^*$, let $A_\lambda:E\to E$ be the multiplication by $\lambda$ on fibers of $E$. It can be extended to $\overline{E}$. The following definition of tangent current is the version in \cite{Vu}. See also \cite{DS18}.
\begin{definition}[Definition 2.1 in \cite{Vu}]\label{def:tangent_current}
	A tangent current $T_\infty$ of $T$ along $V$ is a positive closed current on $E$ such that there exists a sequence $(\lambda_k)_{k\in\N}\subset \C^*$ converging to $\infty$ and a collection of holomorphic admissible maps $\tau_i:U_i\to E$ for $i\in I$ satisfying the following two properties:
	\begin{align*}
		&(i) V\subset \bigcup_{i\in I} U_i\\
		&(ii) T_\infty :=\lim_{k\to\infty} \left(A_{\lambda_k}\right)_*(\tau_i)_*T \,\,\textrm{ on }\,\pi_V^{-1}(U_i\cap V)\setminus H_\infty\,\,\textrm{ for every }\,i\in I.
	\end{align*}
	A tangent current $T_\infty$ trivially extends to $\overline{E}$. We still denote it by $T_\infty$.
	For an open subset $V_0$ of $V$, the horizontal dimension (or the $h$-dimension for short) of $T_\infty$ over $V_0$ is the largest integer $h_T$ such that $T_\infty\wedge \pi_V^*(\omega_V^{h_T})\ne 0$ on $\pi_V^{-1}(V_0)$, where $\omega_V$ is a K\"ahler form on $V$. The $h$-dimension of $T_\infty$ is its $h$-dimension over $V$.
\end{definition}

\begin{remark}
	If $h_T$ is the $h$-dimension of $T_\infty$, then we have $\max\{N-p-n, 0\}\le h_T\le \min\{N-p, N-n\}$. The $h$-dimension $h_T$ of $T_\infty$ is said to be minimal, if $h_T=\max\{N-p-n, 0\}$.
\end{remark}
\begin{remark}\label{rmk:independence_of_admissible_map}
	Thanks to \cite[Proposition 2.5]{KV}, tangent currents are independent of the choice of holomorphic admissible maps $(\tau_i)_{i\in I}$. In particular, they are well-defined on any intersection $\pi_V^{-1}(U_i\cap V)\cap \pi_V^{-1}(U_j\cap V)$ whenever the limits exist.
\end{remark}

\begin{definition}[Definition 3.6 in \cite{DS18}]\label{def:shadow}
	Let $T_\infty$ be a tangent current of $T$ along $V$ with its $h$-dimension $h_T$ over an open subset $V_0$ in $V$. Let $\Omega$ be a smooth closed $\left(N-p-h_T, N-p-h_T\right)$-form on $\pi_V^{-1}(V_0)$ whose restriction to each fiber of $\pi_V$ is cohomologous to a linear subspace in this fiber. The shadow of $T_\infty$ on $V_0$ is the positive closed $(N-n-h_T, N-n-h_T)$-current $T_\infty^h:=\left(\pi_V\right)_*\left(T_\infty\wedge\Omega\right)$ with support in $\pi_V\left(\supp\left(T_\infty\right)\right)\cap V_0$. The shadow of $T_\infty$ is defined to be the shadow on $V$.
\end{definition}

\begin{remark}\label{rmk:indep_shadow}
	\cite[Proposition 3.5]{DS18} says that the shadow $T_\infty^h$ is independent of the choice of $\Omega$. 
\end{remark}

In this work, we focus on the Dinh-Sibony product of positive closed currents rather than tangent currents of general positive closed currents.\medskip
\begin{definition}[Definition 5.9 in \cite{DS18}]\label{def:DSproduct} Let $X$ be a complex manifold of dimension $n$. Let $S_i\in \Cc_{s_i}(X)$ for $i=1, \ldots, k$, where $1\le s:=s_1+\cdots+s_k\le n$. Consider a positive closed current $\pi_1^*S_1\wedge\cdots\wedge\pi_k^*S_k$ on $X^k$, where $\pi_i:X^k\to X$ denotes the canonical projection onto the $i$-th factor for $i=1, \ldots, k$. Let $\Delta$ denote the diagonal submanifold in $X^k$. Assume that there is a unique tangent current $\big(\pi_1^*S_1\wedge\cdots\wedge\pi_k^*S_k\big)_\infty$ of $\pi_1^*S_1\wedge\cdots\wedge\pi_k^*S_k$ along $\Delta$ and that its $h$-dimension is minimal. The Dinh-Sibony product $\left(S_1\wedge \cdots \wedge S_k\right)_{DS}$ of $S_1, \ldots, S_k$ is defined to be the shadow $\left(\pi_1^*S_1\wedge\cdots\wedge\pi_k^*S_k\right)^h_\infty$ of the tangent current $\left(\pi_1^*S_1\wedge\cdots\wedge\pi_k^*S_k\right)_\infty$.
\end{definition}

\subsection{Positive currents}\label{subsec:positive_currents} We consider the following positive currents in order to handle the localization of positive closed currents by allowing non-closedness near the boundary.\medskip

Let $X$ be a complex manifold of dimension $n$. Let $\left(U_j^i\right)_{j\in J}$ for $i=0, \ldots, n$ be open coverings of $X$ such that each $U_j^i$ is biholomorphic to a bounded simply connected domain in $\C^n$ with smooth boundary and that $\overline{U_j^i}\subset U_j^{i-1}$ for each $j\in J$ and for each $i=1, \ldots, n$. We use the same coordinate chart for $U_j^0, \ldots, U_j^{n}$. Let $x$ denote the coordinates for $U_j^i$, where $i=0, 1, \ldots, n$.\medskip

Let $j\in J$ be fixed throughout this and next subsections unless stated otherwise as we can apply the following to each and every $j\in J$. We first specify the domain of local potential functionals.\medskip

Let $\Cc_p(U_j^1; U_j^0)$ be the set of positive $(p, p)$-currents of bounded mass on $U_j^0$, which are closed in $U_j^1$. Let $\widetilde{\Cc}_p(U_j^1; U_j^0)$ denote the subset of smooth currents in $\Cc_p(U_j^1; U_j^0)$. From the standard regularization by convolution, we see that $\widetilde{\Cc}_p(U_j^1; U_j^0)$ is dense in $\Cc_p(U_j^1; U_j^0)$. Let $\Dc_p(U_j^1; U_j^0)$ and $\widetilde{\Dc}_p(U_j^1; U_j^0)$ be the real vector spaces generated by $\Cc_p(U_j^1; U_j^0)$ and $\widetilde{\Cc}_p(U_j^1; U_j^0)$, respectively.\medskip 

From the bounded mass condition, every current in $\Cc_p(U_j^1; U_j^0)$ can be trivially extended to a neighborhood of $\overline{U_j^0}$. Since $\overline{U_j^0}$ is compact, the mass norm $\|S\|_j$ for $S\in \Cc_p(U_j^1; U_j^0)$ can be replaced by
\begin{align*}
	\|S\|_j:=\int_{U_j^0} S\wedge \omega_\euc^{n-p}.
\end{align*}
For $S\in \Dc_p(U_j^1; U_j^0)$, we define the $*$-norm $\|S\|_{*, j}$ on $U_j^0$ to be
\begin{align*}
	\|S\|_{*, j}:=\inf\left(\|S_+\|+\|S_-\|\right),
\end{align*}
where $\inf$ is taken over all $S_\pm\in\Cc_p(U_j^1; U_j^0)$ such that $S=S_+-S_-$.\medskip

We consider the space $\Dc_p(U_j^1; U_j^0)$ with the following topology: a sequence $\left(S_k\right)_{k\in \N}$ converges to $S$ in $\Dc_p(U_j^1; U_j^0)$ if $S_k\to S$ as $k\to\infty$ in the sense of currents and there exists an $M>0$ such that $\|S_k\|_{*, j}\le M$ for all $k\in\N$. On any subset of $\Dc_p(U_j^1; U_j^0)$ with bounded $*$-norm on $U_j^0$, this topology conicides with the classical weak topology for currents. Given open coverings $\left(\left(U_j^i\right)_{j\in J}\right)_{i=0, 1, \ldots, n}$, from a global perspective, we say that a sequence $(S_k)_{k\in\N}$ of positive closed currents on $X$ converges to a positive closed current $S$ on $X$ with respect to the local $*$-topology if for each $j\in J$, there exists $M_j>0$ such that $\|S_k\|_{*, j}\le M_j$ for $k\in \N$ and if $S_k\to S$ in the sense of currents.\medskip


\begin{proposition}\label{prop:compactness}
	The sets $\Bmc_{p, j}:=\{S\in\Dc_p(U_j^1; U_j^0): \|S\|_{*,j}\le 1\}$ and $\Bmc^+_{p, j}:=\{S\in\Cc_p(U_j^1; U_j^0): \|S\|_{*,j}\le 1\}$ are compact with respect to the $*$-topology on $U_j^0$.
\end{proposition}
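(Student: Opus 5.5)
Since the $*$-topology coincides with the weak topology of currents on sets of bounded $*$-norm, it suffices to show that every sequence in $\Bmc_{p,j}$ (resp. $\Bmc^+_{p,j}$) has a subsequence converging weakly to an element of the same set. Sequential compactness is enough because bounded-mass sets of currents on $U_j^0$, paired against forms supported in a fixed compact neighborhood of $\overline{U_j^0}$, are metrizable in the weak topology.

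\emph{The positive part $\Bmc^+_{p,j}$.} For $S\in\Cc_p(U_j^1;U_j^0)$ we have $\|S\|_{*,j}=\|S\|_j$. Indeed, the decomposition $S=S-0$ gives $\|S\|_{*,j}\le\|S\|_j$. Conversely, if $S=S_+-S_-$, then pairing with $\omega_\euc^{n-p}$ gives $\|S_+\|_j=\|S\|_j+\|S_-\|_j\ge\|S\|_j$. Now let $(S_k)$ be a sequence in $\Bmc^+_{p,j}$. The trivial extensions of the $S_k$ are positive currents of mass at most $1$ on a neighborhood of $\overline{U_j^0}$, so Banach--Alaoglu gives a subsequence converging weakly to a current $S$. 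We check that $S$ lies in $\Bmc^+_{p,j}$:
\begin{itemize}
\item \emph{Positivity} is preserved under weak limits.
\item \emph{Closedness on $U_j^1$} holds because $dS_k\to dS$ weakly and $dS_k=0$ there.
\item \emph{The mass bound} survives because mass is lower semicontinuous: write $\int_{U_j^0} S\wedge\omega_\euc^{n-p}=\sup_\phi \langle S,\phi\,\omega_\euc^{n-p}\rangle$ over cutoffs $0\le\phi\le 1$ with compact support in $U_j^0$. Each term is a limit of quantities bounded by $1$.
\end{itemize}
Hence $\|S\|_{*,j}=\|S\|_j\le1$.

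\emph{The signed part $\Bmc_{p,j}$.} Let $(S_k)$ be a sequence in $\Bmc_{p,j}$. By definition of the infimum, choose decompositions $S_k=S_k^+-S_k^-$ with $S_k^\pm\in\Cc_p(U_j^1;U_j^0)$ and $\|S_k^+\|_j+\|S_k^-\|_j\le 1+1/k$. By the positive case (after rescaling by $2$), we may pass to a subsequence along which $S_k^\pm\to S^\pm$ weakly, with $S^\pm\in\Cc_p(U_j^1;U_j^0)$. Then $S_k\to S:=S^+-S^-\in\Dc_p(U_j^1;U_j^0)$. To bound the norm, apply the lower semicontinuity argument to the positive current $S_k^++S_k^-$. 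This gives
\begin{align*}
\|S\|_{*,j}\le \|S^+\|_j+\|S^-\|_j=\|S^++S^-\|_j\le\liminf_{k\to\infty}\left(1+1/k\right)=1.
\end{align*}
Thus $S\in\Bmc_{p,j}$, and the convergence holds in the $*$-topology since the norms of the $S_k$ are bounded by $1$.

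\emph{Main obstacle.} The delicate step is the lower semicontinuity of mass on the open set $U_j^0$. Mass may escape to $\partial U_j^0$, and the limit is a priori a current on a neighborhood of $\overline{U_j^0}$, so one must ensure $S$ is viewed only on $U_j^0$. I would handle this by testing only against compactly supported cutoffs $\phi$ in $U_j^0$, as above. Mass escaping to the boundary can then only decrease the mass of the limit, so the bound $\le 1$ is preserved.
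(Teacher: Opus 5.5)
Your proposal is correct and follows essentially the same route as the paper: near-optimal decompositions $S_k=S_k^+-S_k^-$ with $\|S_k^+\|_j+\|S_k^-\|_j\le 1+1/k$, Banach--Alaoglu applied to each part, and positivity and closedness in $U_j^1$ passing to the limit, with the mass bound recovered by testing against compactly supported cutoffs increasing to $\mathbf{1}_{U_j^0}$. The differences are only organizational: you treat $\Bmc^+_{p,j}$ first and explicitly verify $\|S\|_{*,j}=\|S\|_j$ for positive $S$, which the paper uses implicitly, and you correctly attribute positivity to the limits $S^\pm$ rather than to $S$ itself.
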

\begin{proof}
	The space $\Bmc_{p, j}:=\{S\in\Dc_p(U_j^1; U_j^0): \|S\|_{*,j}\le 1\}$ is metrizable. Let $\left(S_k\right)_{k\in \N}$ be a sequence in $\Bmc_{j, p}$. We show that there exists a convergence subsequence. For each $k\in\N$, we can find two positive closed currents $S_k^\pm$ such that $S_k=S_k^+-S_k^-$ and $\|S_k^+\|_j+\|S_k^-\|_j\le 1+\frac{1}{k}$. Hence, by applying Alaoglu's theorem to $\left(S_k^+\right)_{k\in\N}$ and passing to a convergent subsequence, we may assume that $\left(S_k^+\right)_{k\in\N}$ is convergent. Again applying the same to $\left(S_k^-\right)_{k\in\N}$, we may assume that both sequences $\left(S_k^+\right)_{k\in\N}$ and $\left(S_k^-\right)_{k\in\N}$ are convergent. Let $S^\pm$ denote their respective limit currents and we clearly see $S_k\to S:=S^+-S^-$. The positivity of $S$ and the closedness of $S^\pm$ in $U_j^1$ are clear. Let $\left(\chi_l\right)_{l\in\N}$ be the set of smooth functions with compact support such that $\chi_l\le \chi_{l+1}$ and $\lim_{l\to\infty}\chi_l =\mathbf{1}_{U_j^0}$. Then, we have
	\begin{align*}
		\int \chi_l(S^++S^-)\wedge\omega_\euc^{n-p}=\lim_{k\to\infty}\int \chi_l(S_k^++S_k^-)\wedge\omega_\euc^{n-p}\le 1
	\end{align*}
	By the regularity of Radon measures, letting $l\to\infty$, we see that $\|S\|_{*, j}\le 1$.\medskip
	
	The same argument applies for the compactness of $\Bmc_{p, j}^+$.
\end{proof}
\bigskip

\subsection{Local potential functionals}\label{subsec:local_pot_supftns} 
In this work, we adopt the local approach to the Dinh-Sibony product as in \cite{Ahn25}, using local potential functionals. For a technical reason, we consider refined but slightly more singular ones. However, for our purpose, they are good enough.\medskip


We will call the collection $\left(\left(U_j^i, \xi_j^i\right)\right)_{i=0, 1, \ldots, n, j\in J}$, which will be defined below, localizing data of $X$. For each $i=1, \ldots, n$, let $\xi_j^i:U_j^{i-1}\to [0, 1]$ be a smooth function with compact support  such that $\{\xi_j^i\equiv 1\}$ contains a neighborhood of $\overline{U_j^i}$. We take $\xi_j^0$ to be just a cut-off function of $\overline{U_j^0}$. We use the coordinates $(x, y)\in \C^n\times\C^n$ and $\pi_k: \C^n\times \C^n\to \C^n$ the canonical projection onto the $k$-th factor for $k=1, 2$ so that $\pi_1(x, y)=x$ and $\pi_2(x, y)=y$. We also denote by $\Delta$ the diagonal submanifold of $\C^n\times \C^n$. Let $\Uf_j^i:=U_j^i\times U_j^i$.  Let $\chi_j^i=\left(\pi_1^*\xi_j^i\right)\left(\pi_2^*\xi_j^i\right)$ a smooth function with compact support in $\Uf_j^{i-1}$ such that $\{\chi_j^i\equiv 1\}$ contains a neighborhood of $\overline{\Uf_j^i}$.\medskip 

Let $u=\log |x-y|$ and $\omega=\pi_1^*\omega_\euc+\pi_2^*\omega_\euc=dd^c|x|^2 + dd^c|y|^2$. For a given $\theta\in\C^*$ with $|\theta|\ll 1$, we consider two approximations of $u$ defined by 
\begin{align*}
	u_\theta^\Kmc:=\chi\left(u-\log |\theta|\right)+\log|\theta|\quad \textrm{ and }\quad u_\theta^\Tmc:=\frac{1}{2}\log\left(\left|x-y\right|^2+|\theta|^2\right),
\end{align*}
both of which decreasingly converge to $u$ as $|\theta|\to 0$. For $k=1, \ldots, n$, we will also use
\begin{align*}
	\Kmc_\theta^k:=dd^cu_\theta^\Kmc\wedge (dd^c u)^{k-1},
\end{align*}
which is a smooth positive closed $(i, i)$-form on $\C^n\times \C^n$. 

\begin{definition}\label{def:prod_C}
	Let $T$ be a positive $(p, p)$-current on a domain in $\C^n\times\C^n$. Let $k\in \{1, \ldots, n\}$. Then, we define the current
	\begin{align*}
		\left\langle (dd^cu)^k \wedge T \right\rangle_C:=\lim_{\theta\to 0}\Kmc_\theta^k\wedge T,
	\end{align*} 
	provided that the limit exists in the sense of currents on $\C^n\times \C^n$.
\end{definition}

The following lemma is straightforward. See also \cite[Proposition 3.17]{Ahn25}.
\begin{lemma}\label{lem:induction}
	Let $T$ be a positive closed $(p, p)$-current on a domain in $\C^n\times\C^n$. Let $k\in\{1, \ldots, n\}$. Suppose that the current $\left\langle (dd^cu)^{k-1}\wedge T\right\rangle_C$ is well defined and suppose that $u$ is locally integrable with respect to the measure $\left\langle (dd^cu)^{k-1}\wedge T\right\rangle_C\wedge \omega^{2n-p-k+1}$. Then, $\left\langle (dd^cu)^k\wedge T\right\rangle_C$ is well defined.
\end{lemma}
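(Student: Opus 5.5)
The natural approach is to define $\left\langle (dd^cu)^k\wedge T\right\rangle_C$ as a Bedford--Taylor-type product $dd^c\bigl(u\,\langle (dd^cu)^{k-1}\wedge T\rangle_C\bigr)$ and to show that $\Kmc_\theta^k\wedge T$ converges to it. Write $R:=\left\langle (dd^cu)^{k-1}\wedge T\right\rangle_C$. It is a limit of the positive closed currents $\Kmc_\theta^{k-1}\wedge T$, so it is a positive closed $(p+k-1,p+k-1)$-current. The hypothesis says $u$ is locally integrable with respect to the trace measure $R\wedge\omega^{2n-p-k+1}$. Hence $uR$ is a well-defined current of order zero, and we may set $Q:=dd^c(uR)$.

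The plan is to show that $\Kmc_\theta^k\wedge T\to Q$ as $\theta\to 0$, testing against smooth compactly supported forms $\phi$.

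\emph{Step 1.} Since $u$ is smooth (indeed pluriharmonic-type smooth) off $\Delta$, and $(dd^cu)^{k-1}$ is smooth off $\Delta$, we have
\[
\Kmc_\theta^k\wedge T=dd^c\left(u_\theta^\Kmc\,(dd^cu)^{k-1}\wedge T\right)
\]
away from $\Delta$. I would instead approach $(dd^cu)^{k-1}\wedge T$ through its regularizations $\Kmc_{\theta'}^{k-1}\wedge T$. Since $u_\theta^\Kmc$ is smooth and bounded, $dd^cu_\theta^\Kmc\wedge R$ is well defined and equals $\lim_{\theta'\to0} dd^cu_\theta^\Kmc\wedge\Kmc_{\theta'}^{k-1}\wedge T$. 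I would then identify this with $\Kmc_\theta^k\wedge T$. The point is that on $\{|x-y|<|\theta|/e\}$ the function $u_\theta^\Kmc$ is constant, so $dd^cu_\theta^\Kmc$ vanishes there. Away from that set, $\Kmc_{\theta'}^{k-1}=(dd^cu)^{k-1}$ for $\theta'$ small, because $u^\Kmc_{\theta'}=u$ when $|x-y|>e|\theta'|$. So the two currents agree for $\theta'\ll\theta$, and $\Kmc_\theta^k\wedge T=dd^cu_\theta^\Kmc\wedge R=dd^c(u_\theta^\Kmc R)$.

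\emph{Step 2.} As $\theta\to0$, $u_\theta^\Kmc$ decreases to $u$, and $|u_\theta^\Kmc|\le |u|+C$ locally. By the integrability hypothesis and dominated convergence with respect to the trace measure of $R$, we get $u_\theta^\Kmc R\to uR$ in the sense of currents. Note that $R\wedge dd^c\phi$ is dominated by a constant times the trace measure. Applying $dd^c$, which is continuous, gives $\Kmc_\theta^k\wedge T\to dd^c(uR)$, so the limit exists.

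The main obstacle is Step 1, namely justifying the identity $\Kmc_\theta^k\wedge T=dd^cu_\theta^\Kmc\wedge R$. This needs care because $R$ is only known as a limit and the factor $(dd^cu)^{k-1}$ inside $\Kmc_\theta^k$ is singular on $\Delta$. The localization argument above handles it, since the singular locus is exactly where $dd^cu^\Kmc_\theta$ vanishes. The remaining points are routine: closedness and positivity of $R$, and the convergence of $\Kmc_{\theta'}^{k-1}\wedge T$ on the region where these currents are smooth multiples of $T$.
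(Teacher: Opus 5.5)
Your argument is correct and is essentially the one the paper treats as straightforward. It rests on the identity $\int f\,\Kmc_\theta^{k}\wedge T=\int u_\theta^\Kmc\, dd^cf\wedge\langle (dd^cu)^{k-1}\wedge T\rangle_C$, which the paper itself invokes at the end of the proof of Lemma \ref{lem:def_classic_prod}. You justify it correctly: $dd^cu_\theta^\Kmc$ vanishes on $\{|x-y|<|\theta|/e\}$, and $\Kmc_{\theta'}^{k-1}=(dd^cu)^{k-1}$ on $\{|x-y|>e|\theta'|\}$. Dominated convergence then applies because $u\le u_\theta^\Kmc\le u_{\theta_1}^\Kmc$.

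One harmless slip: off $\Delta$ the function $u$ is smooth and plurisubharmonic, not pluriharmonic, but you only use its smoothness there.
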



Let $K_j\subseteq \Cc_{n-p+1}(U_j^1; U_j^0)$ be a compact subset with uniformly bounded $*$-norm on $U_j^0$ such that $\widetilde{K}_j$ is dense in $K_j\subseteq \Cc_{n-p+1}(U_j^1; U_j^0)$, where $\widetilde{K}_j:=K_j\cap \widetilde{\Cc}_{n-p+1}(U_j^1; U_j^0)$.\medskip

Let $S\in \Cc_p(X)$. We consider the following functionals. For $R\in K_j$, we define
\begin{align*}
	\Fc_{S, j, \theta}^1(R):=\int_{\Uf^0_j} \chi_j^1u_\theta^\Kmc  \pi_1^*S\wedge \pi_2^*R\wedge \omega^{n-1}.
\end{align*}
This family of functions is decreasing as $|\theta|$ decreases to $0$. We allow $-\infty$ as value and we take the limit of the family as $|\theta|\to 0$. We denote by $\Fc_{S, j}^1$ the limit function and by $\Cmc_{S, j}^1:=\{R\in\Cc_{n-s+1}(U_j^1; U_j^0): \Fc_{S, j}^1(R)>-\infty\}$ its domain of finite values. 
According to Lemma \ref{lem:induction}, for $R\in \Cmc_{S, j}^1$, the product $\left\langle\pi_1^*S\wedge \pi_2^*R\wedge dd^cu \right\rangle_C$ is well defined on $\Uf_j^1$. Then, for $R\in \Cmc_{S, j}^1$ (not on the whole space of $K_j$), we define
\begin{align*}
	\Fc_{S, j, \theta}^2(R):=\int_{\Uf_j^1} \chi_j^2u_\theta^\Kmc  \left\langle\pi_1^*S\wedge \pi_2^*R\wedge dd^cu \right\rangle_C\wedge \omega^{n-2},
\end{align*}
which decreases as $|\theta|$ decreases. As previously, we take the limit and denote by $\Fc_{S, j}^2$ the limit function on $\Cmc_{S, j}^1$; we define its associated set $\Cmc_{S, j}^2$. Inductively, we define $\Fc_{S, j, \theta}^i$, $\Fc_{S, j}^i$ and $\Cmc_{S, j}^i$ for $i=1, \ldots, n$. For $i=1, \ldots, n$ and for $R\in \Cmc_{S, j}^i$, we have
{
	\begin{align*}
		\Fc_{S, j, \theta}^i(R)&=\int_{\Uf_j^{i-1}} \chi_j^iu_\theta^\Kmc  \left\langle\pi_1^*S\wedge \pi_2^*R\wedge \left(dd^cu\right)^{i-1} \right\rangle_C\wedge \omega^{n-i}\\
		&=\int_{\Uf_j^{i-1}\setminus \Delta} \chi_j^iu_\theta^\Kmc\left(dd^cu\right)^{i-1}\wedge\pi_1^*S\wedge \pi_2^*R\wedge \omega^{n-i}
\end{align*}}
and
{
	\begin{align*}
		\Fc_{S, j}^i(R)&=\int_{\Uf_j^{i-1}} \chi_j^iu  \left\langle\pi_1^*S\wedge \pi_2^*R\wedge \left(dd^cu\right)^{i-1} \right\rangle_C\wedge \omega^{n-i}\\
		&=\int_{\Uf_j^{i-1}\setminus \Delta} \chi_j^iu \left(dd^cu\right)^{i-1}\wedge\pi_1^*S\wedge \pi_2^*R\wedge \omega^{n-i}.
\end{align*}}

\begin{definition}
	Let $S\in\Cc_p(X)$. Let localizing data $\left(\left(U_j^i, \xi_j^i\right)\right)_{i=0, 1, \ldots, n, j\in J}$ be given. The collection $\left(\Fc_{S,j}^i\right)_{i\in\{1, \ldots, n\}, j\in J}$ as defined above is called a collection of local potential functionals associated with $\big(\big(U_j^i, \xi_j^i\big)\big)_{i=0, 1, \ldots, n, j\in J}$. Each $\Fc_{S, j}^i$ is called the $i$-th local potential functional on $U_j^i$. If the data are well understood, we will simply call it local potential functionals.
\end{definition}

As in \cite[Proposition 5.8]{Ahn25}, $\Fc^i_{S, j, \theta}$ and $\Fc^i_{S, j}$ have the following regularity property.
\begin{proposition}\label{prop:usc_local}
	Let $i\in\{1, \ldots, n\}$, $j\in J$ and $\theta\in\C^*$ with $|\theta|\ll1$. Then, $\Fc^i_{S, j, \theta}$ and $\Fc^i_{S, j}$ are upper-semicontinuous on $\Cmc_{S, j}^i$ in the following sense. Let $R\in\Cmc_{S,j}^i$ be a current and $\left(R_k\right)_{k\in\N}\subset\Cmc^i_{S,j}$ be a sequence of currents such that $R_k\to R$ as $k\to\infty$ in the sense of currents. Then, we have
	\begin{align*}
		\limsup_{k\to \infty}\Fc_{S, j, \theta}^i(R_k)\le \Fc_{S, j, \theta}^i(R)\quad\textrm{ and }\quad\limsup_{k\to \infty}\Fc_{S, j}^i(R_k)\le \Fc_{S, j}^i(R).
	\end{align*}
\end{proposition}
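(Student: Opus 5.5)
We prove the statement by induction on $i$, treating the regularized functional $\Fc^i_{S,j,\theta}$ first and then passing to the limit $\theta\to 0$. Throughout, $R_k\to R$ are currents in $\Cmc^i_{S,j}$ converging in the sense of currents.

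\emph{The case $i=1$, regularized functional.} For fixed $\theta$, the function $u_\theta^\Kmc$ is smooth on $\C^n\times\C^n$. Hence $\chi_j^1 u_\theta^\Kmc\,\omega^{n-1}$ is a smooth form with compact support in $\Uf_j^0$. Since $\pi_1^*S\wedge\pi_2^*R_k\to\pi_1^*S\wedge\pi_2^*R$ weakly on $\Uf_j^0$, the functional $\Fc^1_{S,j,\theta}$ is continuous along the sequence; to see this, test against the smooth form and use that the tensor product of currents is continuous in each factor. The limit functional $\Fc^1_{S,j}=\inf_\theta\Fc^1_{S,j,\theta}$ is a decreasing limit of continuous functionals, because $u_\theta^\Kmc$ decreases as $|\theta|\downarrow 0$ and the integrands are positive measures multiplied by $\chi_j^1\ge0$. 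Therefore
\begin{align*}
\limsup_{k\to\infty}\Fc^1_{S,j}(R_k)\le\limsup_{k\to\infty}\Fc^1_{S,j,\theta}(R_k)=\Fc^1_{S,j,\theta}(R)
\end{align*}
for every $\theta$. Letting $\theta\to0$ gives upper semicontinuity at level $1$.

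\emph{The inductive step.} For $i\ge2$ the obstacle is that the integrand involves $\langle\pi_1^*S\wedge\pi_2^*R_k\wedge(dd^cu)^{i-1}\rangle_C$, which is defined only on $\Uf_j^{i-1}$ and is not obviously continuous in $R_k$. We use the second expression for $\Fc^i_{S,j,\theta}$, which is an integral over $\Uf_j^{i-1}\setminus\Delta$, and move $dd^c$ off one factor of $dd^cu$ by Stokes. Because $\chi_j^i$ is supported in $\Uf_j^{i-1}$, where the current $\langle\cdots(dd^cu)^{i-2}\rangle_C$ is closed (the currents $R_k$ are closed in $U_j^1\supset U_j^{i-1}$), we obtain
\begin{align*}
\Fc^i_{S,j,\theta}(R_k)=\int u\,dd^c\big(\chi_j^i u_\theta^\Kmc\big)\wedge\big\langle\pi_1^*S\wedge\pi_2^*R_k\wedge(dd^cu)^{i-2}\big\rangle_C\wedge\omega^{n-i}.
\end{align*}
More concretely, we regularize $u$ by $u_{\theta'}^\Kmc$ and take the limit. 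We then expand
\begin{align*}
dd^c(\chi_j^i u^\Kmc_\theta)=\chi_j^i\,dd^cu^\Kmc_\theta+(\text{terms involving }d\chi_j^i,\ dd^c\chi_j^i).
\end{align*}
The terms involving derivatives of $\chi_j^i$ are supported where $\chi_j^{i-1}\equiv1$ and away from $\Delta$. There $u$ and $u_\theta^\Kmc$ are smooth and bounded, so these terms are continuous in $R_k$ by the inductive convergence of the $C$-products away from $\Delta$. The main term is
\begin{align*}
\int \chi_j^i\,u\,dd^cu_\theta^\Kmc\wedge\langle\cdots(dd^cu)^{i-2}\rangle_C\wedge\omega^{n-i}.
\end{align*}
This has the same form as $\Fc^{i-1}$, with the smooth positive closed form $dd^cu_\theta^\Kmc$ inserted in place of one factor of $\omega$. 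Since $dd^cu^\Kmc_\theta\le C_\theta\,\omega$ near the support and $u\le$ const, the same reasoning as for $\Fc^{i-1}$ applies to it. Alternatively, a bounded-above function integrated against a weakly convergent sequence of positive measures produces an upper semicontinuous functional. Together these give $\limsup_k\Fc^i_{S,j,\theta}(R_k)\le\Fc^i_{S,j,\theta}(R)$. We then conclude for $\Fc^i_{S,j}$ by monotonicity in $\theta$ exactly as in the case $i=1$.

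\emph{Main difficulty.} The delicate point is justifying the weak convergence
\begin{align*}
\langle\pi_1^*S\wedge\pi_2^*R_k\wedge(dd^cu)^{i-1}\rangle_C\to\langle\pi_1^*S\wedge\pi_2^*R\wedge(dd^cu)^{i-1}\rangle_C
\end{align*}
on compact subsets of $\Uf_j^{i-1}$, or at least the upper bound needed. We would first try to prove upper semicontinuity of the functional $R\mapsto\int\phi\,\langle\cdots\rangle_C$ for smooth $\phi\ge0$ by induction. The argument is the classical Bedford--Taylor-type one: $u$ is plurisubharmonic and bounded above on $\Uf_j^0$, which explains why only the limsup inequality, and not continuity, is claimed. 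The monotonicity of $u_\theta^\Kmc$ in $\theta$ together with the uniform mass bounds from the $*$-norm is what allows the limit in $\theta$ to be exchanged with the limsup in $k$.
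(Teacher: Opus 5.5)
Your $i=1$ step is correct; the paper gives no proof of its own here and defers to \cite{Ahn25}. The inductive step, however, has a genuine gap.

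After integrating by parts you claim that the terms containing derivatives of $\chi_j^i$ are supported away from $\Delta$. This is false. Since $\chi_j^i(x,y)=\xi_j^i(x)\xi_j^i(y)$, one has $d\chi_j^i(x,x)\neq 0$ whenever $x$ lies in the transition region of $\xi_j^i$. The cross terms with $d^cu_\theta^\Kmc$ are harmless, because $u_\theta^\Kmc\equiv\log|\theta|$ on $\{|x-y|<|\theta|/e\}$. The term
\[
\log|\theta|\int u\,dd^c\chi_j^i\wedge\big\langle\pi_1^*S\wedge\pi_2^*R_k\wedge(dd^cu)^{i-2}\big\rangle_C\wedge\omega^{n-i}
\]
near $\Delta$ is not harmless: it is a signed integral of the unbounded function $u$ against the lower-order $C$-product. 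Nothing in your induction gives continuity, or even a one-sided bound, for it. Controlling it is a uniform-integrability statement of the type $\nu^{i-1}_{S,j}(\delta)\to 0$. That is exactly the extra hypothesis the paper imposes in Proposition~\ref{prop:conti_usc_local} (via Lemma~\ref{lem:conti_conti}) to obtain convergence of the $C$-products. For the same reason, the convergence you isolate as the ``main difficulty'' is not available here, since mass can concentrate on $\Delta$ in the limit. Bedford--Taylor arguments also do not apply to the unbounded weight $u$.

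What is missing is the hypothesis $R\in\Cmc^i_{S,j}$, which your argument never uses. With it, neither integration by parts nor induction is needed.
\begin{itemize}
\item Put $\mu_k:=\langle\pi_1^*S\wedge\pi_2^*R_k\wedge(dd^cu)^{i-1}\rangle_C\wedge\omega^{n-i}$ and define $\mu$ likewise for $R$.
\item Off $\Delta$ these are classical products with the smooth form $(dd^cu)^{i-1}$, so $\mu_k\to\mu$ on $\Uf_j^{i-1}\setminus\Delta$.
\item By monotone convergence, $\Fc^i_{S,j}(R)=\int\chi_j^iu\,\mu>-\infty$. Since $u=-\infty$ on $\Delta$, the measure $\mu$ puts no mass on $\Delta\cap\{\chi_j^i>0\}$.
\item Write $\chi_j^iu_\theta^\Kmc=\chi_j^iu_\theta^\Kmc(1-\rho)+\log|\theta|\,\chi_j^i\rho$, where $\rho$ is a cut-off supported in $\{|x-y|<|\theta|/e\}$ and equal to $1$ near $\Delta$.
\item The first piece is supported off $\Delta$, so its integrals against $\mu_k$ converge to the integral against $\mu$.
\item For the second piece, approximate $\chi_j^i\rho$ from below by functions vanishing near $\Delta$ and use $\mu(\Delta\cap\{\chi_j^i>0\})=0$. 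This gives $\liminf_k\int\chi_j^i\rho\,\mu_k\ge\int\chi_j^i\rho\,\mu$.
\item Since $\log|\theta|<0$, this yields $\limsup_k\Fc^i_{S,j,\theta}(R_k)\le\Fc^i_{S,j,\theta}(R)$.
\end{itemize}
This also explains why only upper semicontinuity holds: the sole possible defect is extra mass on $\Delta$, which carries the negative weight $\log|\theta|$. The inequality for $\Fc^i_{S,j}$ then follows from $\Fc^i_{S,j}\le\Fc^i_{S,j,\theta}$, exactly as in your $i=1$ step.
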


\section{Continuous local potential functionals}\label{sec:conti}
In this section, we introduce a generalization of positive closed currents with continuous superpotentials and characterize it in terms of functions related to the diagonal submanifold $\Delta$ as in \cite{DNV}.\medskip 

%

We are interested in the continuity of the functions $\Fc_{S, j}^i$ for $i=1, \ldots, n$ and $j\in J$. 
Observe that each $\Fc_{S, j}^i$ is well-defined on $\widetilde{\Cc}_{n-s+1}(U_j^1; U_j^0)$. 
\begin{definition}[Continuous local potential functionals.]\label{def:conti_loc_supftn}
	Let $S\in\Cc_p(X)$. Let $\left(\left(U_j^i, \xi_j^i\right)\right)_{i=0, 1, \ldots, n, j\in J}$ be localizing data. For each $j\in J$, let $K_j$ be a compact subset of $ \Cc_{n-p+1}(U_j^1; U_j^0)$ with uniformly bounded $*$-norm on $U_j^0$ such that $\widetilde{K}_j$ is dense in $K_j$, where $\widetilde{K}_j:=K_j\cap \widetilde{\Cc}_{n-p+1}(U_j^1; U_j^0)$. 
	We say that $S$ admits continuous local potential functionals on $\left(K_j\right)_{j\in J}$ if each $\Fc_{S, j}^i$ extends continuously to the entire $K_j$ with respect to the $*$-topology on $U_j^0$.\medskip
	
	When there exists a compact set $K\subset \Cc_{n-p+1}(X)$ such that $K_j=K$ for every $j\in J$, we simply say that $S$ admits continuous local potential functionals on $K$.
\end{definition}

In the rest of the section, we use fixed localizing data $\left(\left(U_j^i, \xi_j^i\right)\right)_{i=0, 1, \ldots, n, j\in J}$, $S\in \Cc_p(X)$ and compact sets $\left(K_j\right)_{j\in J}$ unless stated otherwise.\medskip

Following the characterization of continuous superpotentials as in \cite{DNV}, we introduce the following function. Let $i\in\{1, \ldots, n\}$ and $j\in J$. Let $\delta\ll 1$ be a positive real number. We define
\begin{align*}
	\nu_{S,j}^i(\delta):=\sup\int_{\Delta_\delta}-\chi_j^iu \left(dd^cu\right)^{i-1}\wedge\pi_1^*S\wedge \pi_2^*R\wedge \omega^{n-i},
\end{align*}
where the supremum is taken over $R\in\widetilde{K}_j$ such that $\|R\|_j\le 1$.

\begin{proposition}\label{prop:conti_usc_local}
	Let $i\ge 2$ and $j\in J$. Assume that $\displaystyle \lim_{\delta\to 0}\nu_{S, j}^{i-1}(\delta)= 0$. For $\theta\in\C^*$ with $|\theta|\ll1$, $\Fc^i_{S, j, \theta}$ and $\Fc^i_{S, j}$ are upper-semicontinuous on $\Cmc_{S, j}^{i-1}$ in the following sense. Let $R\in\Cmc_{S,j}^{i-1}$ be a current and $\left(R_k\right)_{k\in\N}\subset\Cmc^{i-1}_{S,j}$ be a sequence of currents such that $R_k\to R$ as $k\to\infty$ in the sense of currents. Then, we have
	\begin{align*}
		\limsup_{k\to \infty}\Fc_{S, j, \theta}^i(R_k)\le \Fc_{S, j, \theta}^i(R)\quad\textrm{ and }\quad\limsup_{k\to \infty}\Fc_{S, j}^i(R_k)\le \Fc_{S, j}^i(R).
	\end{align*}
\end{proposition}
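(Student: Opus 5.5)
The plan is to show that, under the hypothesis on $\nu_{S,j}^{i-1}$, the currents
\[
T_k:=\left\langle\pi_1^*S\wedge\pi_2^*R_k\wedge(dd^cu)^{i-1}\right\rangle_C
\]
converge to $T:=\left\langle\pi_1^*S\wedge\pi_2^*R\wedge(dd^cu)^{i-1}\right\rangle_C$ on $\Uf_j^{i-1}$ in the sense of currents. Once this holds, the argument of Proposition \ref{prop:usc_local} applies verbatim to the sequence $(T_k)$ in place of $\pi_1^*S\wedge\pi_2^*R_k$. Concretely:
\begin{itemize}
\item $\chi_j^i u_\theta^\Kmc$ is a continuous function with compact support in $\Uf_j^{i-1}$, and the masses of $T_k$ are uniformly bounded.
\item Hence $\Fc_{S,j,\theta}^i(R_k)\to\Fc_{S,j,\theta}^i(R)$; this gives even continuity, which is stronger than the first inequality.
\item For the second inequality, use $u\le u_\theta^\Kmc$, so $\Fc_{S,j}^i(R_k)\le \Fc_{S,j,\theta}^i(R_k)$.
\item Then $\limsup_k \Fc_{S,j}^i(R_k)\le \Fc_{S,j,\theta}^i(R)$, and letting $\theta\to 0$ together with monotone convergence gives the claim.
\end{itemize}

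To prove $T_k\to T$, I would write $T_k$ as $dd^c$ of the current $u\left\langle\pi_1^*S\wedge\pi_2^*R_k\wedge(dd^cu)^{i-2}\right\rangle_C$, via Lemma \ref{lem:induction}, which is well defined on $\Cmc^{i-1}_{S,j}$. It then suffices to show
\[
u\,P_k\wedge\omega^{n-i+1}\to u\,P\wedge\omega^{n-i+1},\qquad P_k:=\left\langle\pi_1^*S\wedge\pi_2^*R_k\wedge(dd^cu)^{i-2}\right\rangle_C,
\]
weakly on the region where $\chi_j^{i-1}\equiv1$, which contains $\overline{\Uf_j^{i-1}}$. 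Inducting on $i$, one may assume $P_k\to P$; the base case is $i-2=0$, where $P_k=\pi_1^*S\wedge\pi_2^*R_k$ and the convergence is immediate. Split $u=u_\theta^\Kmc+(u-u_\theta^\Kmc)$.
\begin{itemize}
\item The first piece is continuous, so $u_\theta^\Kmc P_k\to u_\theta^\Kmc P$.
\item The second piece is supported in $\Delta_{e|\theta|}$ and bounded in absolute value by $-u$ there, up to a constant.
\item Against a test form, the second piece is therefore bounded by $C\int_{\Delta_\delta}-\chi_j^{i-1}u\,P_k\wedge\omega^{n-i+1}$ with $\delta\simeq|\theta|$, up to the normalization $\|R_k\|_j$.
\end{itemize}

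The main obstacle is controlling this tail uniformly in $k$ by $\nu^{i-1}_{S,j}(\delta)$. The difficulty is that $\nu$ is a supremum over smooth $R\in\widetilde K_j$, while $R_k$ is merely in $\Cmc^{i-1}_{S,j}$. I would first handle smooth $R_k$, or rescale so that $\|R_k\|_j\le1$, since bounded mass follows from convergence with a bounded $*$-norm. For general $R_k$, I would approximate by smooth $R_{k,\epsilon}\in\widetilde K_j$ with $R_{k,\epsilon}\to R_k$. The tail integral is a lower semicontinuous function of $R$: it is an integral of a nonnegative lower semicontinuous function, using $-u$ truncated and an increasing cut-off near $\Delta_\delta$. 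So the bound by $\nu^{i-1}_{S,j}(\delta)$ passes to the limit, giving a tail $\le C\nu^{i-1}_{S,j}(2\delta)$ uniformly in $k$.

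Letting $k\to\infty$ and then $\theta\to0$ yields $T_k\to T$, and hence the proposition. The same tail bound applied to $R$ itself shows that the limit is indeed $uP\wedge\omega^{n-i+1}$.
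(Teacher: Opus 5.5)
Your overall strategy is the paper's. You get $T_k\to T$ on $\Uf_j^{i-1}$ from a near-diagonal tail bound that is uniform in $k$ and controlled by $\nu_{S,j}^{i-1}$. You transfer that bound from $\widetilde K_j$ to non-smooth currents by approximation. You then deduce continuity of $\Fc^i_{S,j,\theta}$ and upper semicontinuity of its decreasing limit.

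The gap is the step ``Inducting on $i$, one may assume $P_k\to P$.'' The proposition assumes only $\lim_{\delta\to0}\nu^{i-1}_{S,j}(\delta)=0$. Your own argument at level $i-1$, i.e.\ convergence of $\langle(dd^cu)^{i-2}\wedge\pi_1^*S\wedge\pi_2^*R_k\rangle_C$, would need $\lim_{\delta\to0}\nu^{i-2}_{S,j}(\delta)=0$, which is not available when $i\ge 3$. Your route uses $P_k\to P$ in two essential places:
\begin{itemize}
\item In $u^\Kmc_\theta P_k\to u^\Kmc_\theta P$. Since $u^\Kmc_\theta\equiv\log|\theta|$ near $\Delta$, this is exactly convergence of $P_k$ near $\Delta$, which is the nontrivial part.
\item In the lower semicontinuity of the tail along $R_{k,\epsilon}\to R_k$. ``Integral of a nonnegative lower semicontinuous function'' only gives the inequality if $P(R_{k,\epsilon})\to P(R_k)$ weakly.
\end{itemize}

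The paper never needs convergence at level $i-1$. It tests $T_k$ via
\[
\int f\,\Kmc^{i-1}_\theta\wedge\pi_1^*S\wedge\pi_2^*R_k=\int u^\Kmc_\theta\, dd^cf\wedge P_k .
\]
Here $\Kmc^{i-1}_\theta$ is a smooth form, because $dd^cu^\Kmc_\theta$ vanishes near $\Delta$. So for fixed $\theta$ the limit in $k$ is immediate. The difference from $\int f\,T_k$ is a tail term bounded uniformly in $k$ by $\nu^{i-1}_{S,j}$, and one then swaps the limits (Lemmas \ref{lem:def_classic_prod} and \ref{lem:conti_conti}). The step from smooth approximants to $R_k$ is handled by Proposition \ref{prop:usc_local}.

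You can also repair your own version directly, as follows.
\begin{itemize}
\item Off $\Delta$ every $P(\cdot)$ is the naive product, so weak limits of $P(R_{k,\epsilon})$ agree with $P(R_k)$ there.
\item The $\nu$-bound for smooth currents gives uniform mass bounds near $\Delta$, since $-u\ge-\log\delta$ on $\Delta_\delta$.
\item $P(R_k)$ charges no mass on $\Delta$, because $\Fc^{i-1}_{S,j}(R_k)>-\infty$.
\end{itemize}
Hence every weak limit $P'$ satisfies $P'\ge P(R_k)$, which is all your lower-semicontinuity step needs. The resulting uniform tail bound also prevents any mass of $P_k$ from accumulating on $\Delta$. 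So $P_k\to P$ follows from the hypothesis on $\nu^{i-1}_{S,j}$ alone, and the induction can be dropped.
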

The proof comes after Lemmas \ref{lem:def_classic_prod} and \ref{lem:conti_conti}.
\begin{lemma}\label{lem:def_classic_prod}
	Let $i\ge 2$ and $j\in J$. Suppose that we have $\displaystyle\lim_{\delta\to 0}\nu_{S, j}^{i-1}(\delta)= 0$. Then, the product $\left\langle \left(dd^cu\right)^{i-1}\wedge\pi_1^*S\wedge \pi_2^*R\right\rangle_C$ is well defined on $\Uf_j^{i-1}$ for every $R\in\Cmc_{S,j}^{i-1}$.
\end{lemma}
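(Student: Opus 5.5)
The plan is to proceed by induction on $i$, using Lemma \ref{lem:induction} as the engine. Write $T:=\pi_1^*S\wedge\pi_2^*R$. By hypothesis $R\in\Cmc_{S,j}^{i-1}$, so by the inductive construction of the sets $\Cmc_{S,j}^k$ the currents $\left\langle (dd^cu)^{k}\wedge T\right\rangle_C$ are well defined on $\Uf_j^{k}$ for $k\le i-2$. The remaining task is to pass from $(dd^cu)^{i-2}$ to $(dd^cu)^{i-1}$ on $\Uf_j^{i-1}$. By Lemma \ref{lem:induction}, applied with $k=i-1$ on the domain $\Uf_j^{i-1}$, this needs only that $u$ be locally integrable with respect to $\left\langle (dd^cu)^{i-2}\wedge T\right\rangle_C\wedge\omega^{n-i+1}$ there.

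To get this integrability, I will use $\Fc^{i-1}_{S,j}(R)>-\infty$. Since $\chi_j^{i-1}\equiv 1$ on a neighborhood of $\overline{\Uf_j^{i-1}}$, this gives
\begin{align*}
\int_{\Uf_j^{i-1}} -u\,\left\langle (dd^cu)^{i-2}\wedge T\right\rangle_C\wedge\omega^{n-i+1}<\infty
\end{align*}
near the diagonal. Away from $\Delta$, $u$ is bounded on the relatively compact set and there is nothing to prove. There is one subtlety: the finiteness of $\Fc^{i-1}_{S,j}$ only controls the integral on $\Uf\setminus\Delta$ if $\left\langle (dd^cu)^{i-2}\wedge T\right\rangle_C$ puts no mass on $\Delta$, since $u=-\infty$ on $\Delta$. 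To rule this out I will use the hypothesis $\nu_{S,j}^{i-1}(\delta)\to0$.

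For smooth $R'\in\widetilde K_j$ with $\|R'\|_j\le1$, the integral of $-\chi_j^{i-1}u$ against the $(i-2)$-product over $\Delta_\delta$ is at most $\nu^{i-1}_{S,j}(\delta)$. Since $-u\ge -\log\delta\to+\infty$ on $\Delta_\delta$, the mass of the measure on $\Delta_\delta$ is at most $\nu^{i-1}_{S,j}(\delta)/(-\log\delta)$, uniformly in such $R'$. Approximating a general $R\in\Cmc^{i-1}_{S,j}\subset K_j$ by smooth $R'_m\in\widetilde K_j$ (after normalizing the mass), I will pass this bound to the limit via the weak convergence of the $(i-2)$-products. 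Lower semicontinuity of $-u\,\mathbf 1$ on open sets, together with the upper-semicontinuity of Proposition \ref{prop:usc_local} (or its iterate, Proposition \ref{prop:conti_usc_local} at level $i-1$), gives
\begin{align*}
\int_{\Delta_\delta}-u\left\langle (dd^cu)^{i-2}\wedge T\right\rangle_C\wedge\omega^{n-i+1}\le C\,\nu^{i-1}_{S,j}(\delta)\to 0 .
\end{align*}
Hence there is no mass on $\Delta$, and $u$ is integrable near $\Delta$. Lemma \ref{lem:induction} then yields the well-definedness of $\left\langle (dd^cu)^{i-1}\wedge T\right\rangle_C$ on $\Uf_j^{i-1}$.

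The main obstacle is this limiting step. It requires that $\left\langle (dd^cu)^{i-2}\wedge\pi_1^*S\wedge\pi_2^*R'_m\right\rangle_C$ converges to the corresponding product for $R$, i.e.\ continuity of the lower-order products along smooth approximations. This is where the decay of $\nu^{i-1}$ is used: it makes the contribution near $\Delta$ uniformly small. I would prove the convergence by an induction that compares $\Kmc^{i-2}_\theta\wedge T$ with the limit on $\Uf\setminus\Delta_\delta$, where everything is smooth in $u$, and controls the remainder on $\Delta_\delta$ by $\nu^{i-1}$.
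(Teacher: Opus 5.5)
Your first two paragraphs, together with the final appeal to Lemma \ref{lem:induction}, already form a complete proof. The ``subtlety'' you raise does not exist, so you can drop the whole approximation argument.

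Recall that $\Fc^{i-1}_{S,j}(R)$ is \emph{defined} as the decreasing limit of $\Fc^{i-1}_{S,j,\theta}(R)=\int\chi_j^{i-1}u^{\Kmc}_\theta\,d\mu$, where $\mu:=\left\langle (dd^cu)^{i-2}\wedge\pi_1^*S\wedge\pi_2^*R\right\rangle_C\wedge\omega^{n-i+1}$. Two facts about $u^{\Kmc}_\theta$ matter: it equals $\log|\theta|$ on $\{|x-y|\le|\theta|/e\}$, and it is bounded above uniformly in $\theta$ on $\supp\chi_j^{i-1}$. Hence $\Fc^{i-1}_{S,j,\theta}(R)\le\log|\theta|\int_{\Delta}\chi_j^{i-1}\,d\mu+C$. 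Finiteness of the limit therefore forces $\chi_j^{i-1}\mu$ to put no mass on $\Delta$, and monotone convergence then gives $\int\chi_j^{i-1}|u|\,d\mu<\infty$. Since $\chi_j^{i-1}\equiv1$ near $\overline{\Uf_j^{i-1}}$, Lemma \ref{lem:induction} applies on $\Uf_j^{i-1}$.

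The hypothesis on $\nu^{i-1}_{S,j}$ is never used. This matches the paper's own remark in Subsection \ref{subsec:local_pot_supftns}, where the product for $R\in\Cmc^1_{S,j}$ is obtained from Lemma \ref{lem:induction} alone.

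It is just as well that you do not need the extra step, because for $i\ge3$ it does not close as sketched. Convergence of $\left\langle (dd^cu)^{i-2}\wedge\pi_1^*S\wedge\pi_2^*R'_m\right\rangle_C$ to the corresponding product for $R$ is a Lemma \ref{lem:conti_conti}-type statement one level down. It would require $\nu^{i-2}_{S,j}\to0$, which is not assumed. What the $\nu^{i-1}$ bound plus lower semicontinuity actually yields is weaker: every weak limit of these products is the trivial extension of $(dd^cu)^{i-2}\wedge\pi_1^*S\wedge\pi_2^*R$ from outside $\Delta$. That agrees with $\left\langle (dd^cu)^{i-2}\wedge\pi_1^*S\wedge\pi_2^*R\right\rangle_C$ only if the latter has no mass on $\Delta$, which is the very point at issue.

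The paper argues differently. It shows that $l\mapsto\int\chi_j^{i-1}u^{\Kmc}_{\theta_l}\,\alpha\wedge\left\langle (dd^cu)^{i-2}\wedge\pi_1^*S\wedge\pi_2^*R\right\rangle_C$ is Cauchy for every smooth positive closed $\alpha$. It does this by comparing with smooth approximants $R'_m\in\widetilde K_j$ through the upper-semicontinuity argument, and bounding the difference by $cM_{K_j}\|\alpha\|_\infty\,\nu^{i-1}_{S,j}(\delta)$ once $|\theta_l|<\delta/e$. In effect, this is your approximation argument used as the main tool. What it buys is uniformity: the Cauchy rate depends only on $\delta$ and the mass bound of $K_j$, not on $R$. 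That uniform estimate is exactly what Lemma \ref{lem:conti_conti} reuses to interchange the limits in $k$ and $\theta$.

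Your route yields only the pointwise well-definedness asserted in the lemma. In exchange it is shorter, avoids approximation entirely, and shows that for this statement on its own the decay of $\nu^{i-1}_{S,j}$ is superfluous.
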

\begin{proof}
	Since $R\in\Cmc_{S,j}^{i-1}$, $\left\langle\left(dd^cu\right)^{i-2}\wedge\pi_1^*S\wedge \pi_2^*R\right\rangle_C$ is well defined. We show that for any sequence $(\theta_l)_{l\in\N}\subset \C^*$ converging to $0$, the sequence $\Big(\int \chi_j^{i-1}u_{\theta_l}^\Kmc \alpha\wedge\left\langle\left(dd^cu\right)^{i-2}\wedge\pi_1^*S\wedge \pi_2^*R\right\rangle_C\Big)_{l\in\N}$ is a Cauchy sequence, where $\alpha$ is a smooth positive closed $(n-i+1, n-i+1)$-form on $\Uf_j^0$.\medskip
	
	Let $\varepsilon>0$ be given. Let $M_{K_j}>0$ be such that $\sup_{R'\in K_j}\|R'\|_j\le M_{K_j}$. Let $\left(R'_m\right)_{m\in\N}$ be a sequence in $\widetilde{K}_j$ such that $\lim_{m\to\infty}R'_m=R$. Let $c>0$ be a constant such that $\alpha\le c\|\alpha\|_\infty\omega^{n-i+1}$. Then, we can find a $\delta>0$ such that $\nu_{S, j}^{i-1}(\delta)<\frac{\varepsilon}{cM_{K_j}\|\alpha\|_\infty}$. We choose $L\in\N$ such that whenever $l\ge L$, we have $|\theta_l|<\frac{\delta}{e}$. Then, for every $l', l''\ge L$ (we may assume that $|\theta_{l'}|\le|\theta_{l''}|$), we have
	\begin{align*}
		0&\ge\int \chi_j^{i-1}(u_{\theta'}^\Kmc-u_{\theta''}^\Kmc) \alpha\wedge\left\langle\left(dd^cu\right)^{i-2}\wedge\pi_1^*S\wedge \pi_2^*R\right\rangle_C\\
		&\ge \limsup_{m\to\infty}\int \chi_j^{i-1}(u_{\theta_1}^\Kmc-u_{\theta_2}^\Kmc) \alpha\wedge\left\langle\left(dd^cu\right)^{i-2}\wedge\pi_1^*S\wedge \pi_2^*R'_m\right\rangle_C\ge -cM_{K_j}\|\alpha\|_\infty\nu_{S, j}^{i-1}(\delta)>-\varepsilon.
	\end{align*}
	The second inequality is from the same argument used in the proof of the upper semi-continuity of $\Fc_{S, j}^{i-1}$ on $\Cmc_{S, j}^{i-1}$ as in Proposition \ref{prop:usc_local}. Hence, it is a Cauchy sequence as desired. It is not difficult to see that the limit is independent of the choice of the sequence $(\theta_l)_{l\in\N}\subset \C^*$.\medskip
	
	From the above assertion, we see that $$\int f\Kmc_\theta^{i-1}\wedge\pi_1^*S\wedge \pi_2^*R=\int u_\theta^\Kmc dd^cf\wedge\big\langle\left(dd^cu\right)^{i-2}\wedge\pi_1^*S\wedge \pi_2^*R\big\rangle_C$$ converges as $\theta\to 0$, where $f$ is a smooth test $(n-i, n-i)$-form on $\Uf_j^{i-1}$.
\end{proof}

The following lemma was actually proved in \cite{Ahn25}. For reader's convenience, we write an adapted version of the proof.
\begin{lemma}\label{lem:conti_conti}
	Let $i\ge 2$ and $j\in J$. Let $S$ and $R$ and $(R_k)_{k\in\N}$ be as in Proposition \ref{prop:conti_usc_local}. Suppose that $\displaystyle \lim_{\delta\to 0}\nu_{S, j}^{i-1}(\delta)= 0$. Then, on $\Uf_j^{i-1}$, we have the continuity of the product as follows:
	{\begin{align*}
			\left\langle \left(dd^cu\right)^{i-1}\wedge\pi_1^*S\wedge \pi_2^*R_k\right\rangle_C\to \left\langle \left(dd^cu\right)^{i-1}\wedge\pi_1^*S\wedge \pi_2^*R\right\rangle_C
	\end{align*}}
	in the sense of currents.
\end{lemma}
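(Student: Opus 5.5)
The plan is a standard $\varepsilon/3$ argument. We approximate the product $\langle (dd^cu)^{i-1}\wedge\pi_1^*S\wedge\pi_2^*R_k\rangle_C$ by the smooth-kernel products $\Kmc_\theta^{i-1}\wedge\langle (dd^cu)^{i-2}\wedge\pi_1^*S\wedge\pi_2^*R_k\rangle_C$. The hypothesis $\lim_{\delta\to 0}\nu^{i-1}_{S,j}(\delta)=0$ should make this approximation uniform in $k$. The argument proceeds by induction on $i$. For $i=2$ the inner product is just $\pi_1^*S\wedge\pi_2^*R_k$, which converges to $\pi_1^*S\wedge\pi_2^*R$ because $S$ is fixed and $R_k\to R$. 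For larger $i$ we assume inductively that $\langle (dd^cu)^{i-2}\wedge\pi_1^*S\wedge\pi_2^*R_k\rangle_C\to\langle (dd^cu)^{i-2}\wedge\pi_1^*S\wedge\pi_2^*R\rangle_C$. This is what the paper's earlier lemma (from \cite{Ahn25}) provides, together with the analogous statement at level $i-1$.

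Fix a smooth test form $f$ of the right bidegree with compact support in $\Uf_j^{i-1}$. As at the end of the proof of Lemma \ref{lem:def_classic_prod}, write
\begin{align*}
\int f\,\Kmc_\theta^{i-1}\wedge\pi_1^*S\wedge\pi_2^*R_k=\int u_\theta^\Kmc\, dd^cf\wedge\big\langle (dd^cu)^{i-2}\wedge\pi_1^*S\wedge\pi_2^*R_k\big\rangle_C .
\end{align*}
We then decompose the difference of the pairings of $f$ with the two $\langle\cdot\rangle_C$-currents into three terms.
\begin{enumerate}
\item[(a)] The difference between the $R_k$-product and its $\theta$-approximation.
\item[(b)] The difference of the $\theta$-approximations for $R_k$ and for $R$.
\item[(c)] The difference between the $\theta$-approximation for $R$ and the $R$-product.
\end{enumerate}

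For fixed $\theta$, term (b) tends to $0$ as $k\to\infty$. Indeed, $u_\theta^\Kmc dd^cf$ is a smooth compactly supported form, so the induction hypothesis gives the convergence directly. Term (c) tends to $0$ as $\theta\to0$ by Lemma \ref{lem:def_classic_prod}.

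The main obstacle is term (a), uniformly in $k$. Split $dd^cf=\beta^+-\beta^-$ with $\beta^\pm$ positive and bounded by $c\|f\|_{\mathcal C^2}\chi_j^{i-1}\omega^{n-i+1}$; here we use $\chi_j^{i-1}\equiv1$ near $\overline{\Uf_j^{i-1}}\supset\supp f$. The difference $u_\theta^\Kmc-u$ vanishes outside $\Delta_{e|\theta|}$ and is bounded there by $-u$, up to a constant. So term (a) is controlled by
\begin{align*}
c\|f\|_{\mathcal C^2}\int_{\Delta_\delta}-\chi_j^{i-1}u\,(dd^cu)^{i-2}\wedge\pi_1^*S\wedge\pi_2^*R_k\wedge\omega^{n-i+1}
\end{align*}
with $\delta=e|\theta|$. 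For smooth $R_k$ with $\|R_k\|_j\le M_{K_j}$, this is at most $cM_{K_j}\|f\|\,\nu^{i-1}_{S,j}(\delta)$ by definition of $\nu$. For general $R_k$ I would first approximate $R_k$ by smooth $R'_m\in\widetilde K_j$. Then I would pass to the limit in $m$ using the upper semicontinuity argument of Proposition \ref{prop:usc_local}, exactly as in the second inequality of the proof of Lemma \ref{lem:def_classic_prod}. That proof shows the bound by $\nu$ persists for the limit current, since the quantity is a limit of integrals of the nonpositive function $u_\theta^\Kmc-u_{\theta'}^\Kmc$ in the decreasing limit $\theta'\to0$.

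Choosing $\theta$ with $\nu^{i-1}_{S,j}(e|\theta|)$ small, then $k$ large, gives the conclusion. Mass bounds on the products for the tested limit follow from the uniform $*$-norm bound on $K_j$.
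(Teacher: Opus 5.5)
Your argument is essentially the paper's. The paper uses the proof of Lemma \ref{lem:def_classic_prod} and the uniform mass bound on $K_j$ to show that the $\theta$-approximations converge uniformly in $k$, and then interchanges $\lim_{k}$ and $\lim_{\theta\to 0}$. Your three-term $\varepsilon/3$ decomposition is that interchange written out. Your handling of term (a) matches the paper's: you bound by $\nu^{i-1}_{S,j}(e|\theta|)$ for smooth $R_k$ and then pass to general $R_k$ by the upper semicontinuity argument.

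The step to fix is term (b). You justify it by induction on $i$, that is, by convergence of $\langle (dd^cu)^{i-2}\wedge\pi_1^*S\wedge\pi_2^*R_k\rangle_C$. But the statement at level $i-1$ would need $\lim_{\delta\to 0}\nu^{i-2}_{S,j}(\delta)=0$, and that is not a hypothesis of the lemma; only $\nu^{i-1}_{S,j}$ is controlled. So this induction is not available as you state it.

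The induction is also unnecessary. Since $u_\theta^\Kmc\equiv\log|\theta|$ on $\{|x-y|<|\theta|/e\}$, the form $\Kmc_\theta^{i-1}=dd^cu_\theta^\Kmc\wedge(dd^cu)^{i-2}$ is smooth on all of $\C^n\times\C^n$. By Definition \ref{def:prod_C}, the approximant is just $f\,\Kmc_\theta^{i-1}$ paired with $\pi_1^*S\wedge\pi_2^*R_k$. This agrees with $dd^cu_\theta^\Kmc\wedge\langle(dd^cu)^{i-2}\wedge\pi_1^*S\wedge\pi_2^*R_k\rangle_C$, because the $\langle\cdot\rangle_C$-product coincides with the ordinary product off $\Delta$. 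Hence, for fixed $\theta$, term (b) tends to $0$ directly from $\pi_1^*S\wedge\pi_2^*R_k\to\pi_1^*S\wedge\pi_2^*R$. This is how the paper obtains the inner limit in $k$. Rewriting the pairing as $\int u_\theta^\Kmc\,dd^cf\wedge\langle\cdots\rangle_C$ is only needed for term (a).
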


\begin{proof}
	Let $\alpha$ be a smooth positive closed $(n-i+1, n-i+1)$-form on $\Uf_j^0$. Let $(\theta_l)_{l\in\N}\subset \C^*$ be a sequence converging to $0$. For each $l\in\N$, let
	\begin{align*}
		\Theta_l(k):=\int \chi_j^{i-1}u_{\theta_l}^\Kmc \alpha\wedge\left\langle\left(dd^cu\right)^{i-2}\wedge\pi_1^*S\wedge \pi_2^*R_k\right\rangle_C\quad\textrm{ for }k\in\N.
	\end{align*}
	Then $\left(\left(\Theta_l(k)\right)_{k\in\N}\right)_{l\in\N}$ is a collection of sequences. Notice that since currents in $K_j$ have uniformly bounded mass, the proof in Lemma \ref{lem:def_classic_prod} implies that $\left(\left(\Theta_k(l)\right)_{k\in\N}\right)_{l\in\N}$ is uniformly Cauchy with respect to $k$. That is, $\left(\left(\Theta_l(k)\right)_{k\in\N}\right)_{l\in\N}$ converges uniformly with respect to $k$. Hence, we can switch the order of limits $\displaystyle \lim_{k\to\infty}\lim_{l\to\infty}\Theta_l(k)=\lim_{l\to\infty}\lim_{k\to\infty}\Theta_l(k)$. Then, for a smooth test $(n-i, n-i)$-form $f$ on $\Uf_j^{i-1}$, we have
	\begin{align*}
		&\lim_{k\to\infty}\int_{\Uf_j^{i-1}}f\left\langle \left(dd^cu\right)^{i-1}\wedge\pi_1^*S\wedge \pi_2^*R_k\right\rangle_C=\lim_{k\to\infty}\lim_{l\to \infty}\int_{\Uf_j^{i-1}}f \Kmc_{\theta_l}^{i-1}\wedge\pi_1^*S\wedge \pi_2^*R_k\\
		&=\lim_{l\to \infty}\lim_{k\to\infty}\int_{\Uf_j^{i-1}}f \Kmc_{\theta_l}^{i-1}\wedge\pi_1^*S\wedge \pi_2^*R_k=\int_{\Uf_j^{i-1}}f\left\langle \left(dd^cu\right)^{i-1}\wedge\pi_1^*S\wedge \pi_2^*R\right\rangle_C
	\end{align*}
	
\end{proof}

\begin{proof}
	[Proof of Proposition \ref{prop:conti_usc_local}] From Lemma \ref{lem:conti_conti}, the function $\Fc_{S, j, \theta}^i$ is continuous on $\Cmc_{S, j}^{i-1}$. As $\Fc_{S, j}^i$ is a decreasing limit of $\Fc_{S, j, \theta}^i$, $\Fc_{S, j}^i$ is upper semi-continuous on $\Cmc_{S, j}^{i-1}$.
\end{proof}
%
\begin{proposition}\label{prop:char_conti}
	Let $j\in J$. The functions $\Fc_{S, j}^i$ are defined and continuous on $K_j$ for $i=1, \ldots, n$ with respect to the $*$-topology on $U_j^0$ if and only if $\displaystyle \lim_{\delta\to 0}\nu_{S, j}^i(\delta)=0$ for $i=1, \ldots, n$.
\end{proposition}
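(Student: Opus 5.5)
The plan is to prove both directions by induction on $i$, comparing $\Fc_{S,j}^i$ with its smooth truncations. The key tool is the splitting
\begin{align*}
	\Fc_{S,j}^i(R)-\Fc_{S,j,\theta}^i(R)=\int_{\Delta_{e|\theta|}}\chi_j^i\,(u-u_\theta^\Kmc)\left(dd^cu\right)^{i-1}\wedge\pi_1^*S\wedge\pi_2^*R\wedge\omega^{n-i}.
\end{align*}
It holds because $u_\theta^\Kmc=u$ wherever $|x-y|\ge e|\theta|$. Since $0\ge u-u_\theta^\Kmc\ge u-\log|\theta|\ge u$ up to a bounded constant on that set, and $u_\theta^\Kmc$ is bounded there, the difference is controlled by $\nu_{S,j}^i(e|\theta|)$ together with a mass term.

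For the ``if'' direction, assume $\lim_{\delta\to0}\nu_{S,j}^i(\delta)=0$ for all $i$, and induct on $i$. At step $i$, Lemma \ref{lem:def_classic_prod} and Lemma \ref{lem:conti_conti} (using $\nu^{i-1}\to0$) show that $\langle (dd^cu)^{i-1}\wedge\pi_1^*S\wedge\pi_2^*R\rangle_C$ exists for all $R\in K_j$ and depends continuously on $R$; for $i=1$ we only need the classical product $\pi_1^*S\wedge\pi_2^*R$. Hence $\Fc_{S,j,\theta}^i$ is continuous on $K_j$, since $\chi_j^iu_\theta^\Kmc\,\omega^{n-i}$ is a smooth test form on the relevant domain.

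The displayed estimate gives $\sup_{R\in\widetilde K_j}|\Fc_{S,j}^i(R)-\Fc_{S,j,\theta}^i(R)|\lesssim M_{K_j}\,\nu_{S,j}^i(e|\theta|)\to0$. I would then pass from $\widetilde K_j$ to $K_j$. Upper semicontinuity (Proposition \ref{prop:conti_usc_local}) gives one inequality. The other comes from the monotone limit combined with the estimate applied to smooth approximants $R'_m\to R$: because $(u-u_\theta^\Kmc)$ is upper semicontinuous and nonpositive, the integral over $\Delta_{e|\theta|}$ for $R$ is bounded below by the $\limsup$ over the $R'_m$, exactly as in the proof of Lemma \ref{lem:def_classic_prod}. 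Therefore $\Fc_{S,j,\theta}^i\to\Fc_{S,j}^i$ uniformly on $K_j$. As a uniform limit of continuous functions, $\Fc^i_{S,j}$ is finite and continuous, which also places $K_j\subset\Cmc^i_{S,j}$ for the next step.

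For the ``only if'' direction, assume every $\Fc_{S,j}^i$ is finite and continuous on the compact set $K_j$. By induction the $\Fc^{i}_{S,j,\theta}$ are continuous, via the same lemmas applied at lower indices. Then $\Fc^i_{S,j,\theta}\downarrow\Fc^i_{S,j}$ is a monotone family of continuous functions converging to a continuous limit on a compact set, so Dini's theorem gives uniform convergence.

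It remains to compare $\nu$ with this uniform gap. Choose $\theta$ with $e|\theta|\ll\delta$ and write $-u=(u_{\theta}^\Kmc-u)-u_\theta^\Kmc$ on $\Delta_\delta$. The first piece is bounded by the uniform gap. For the second piece I would instead use the cutoff $\chi(u-\log\delta)$-type comparison: set $\delta=|\theta'|$ and bound
\begin{align*}
	\int_{\Delta_\delta}-\chi_j^iu\,(\cdots)\le 2\bigl(\Fc^i_{S,j,\theta'}(R)-\Fc^i_{S,j}(R)\bigr)+C\,\|\langle\cdots\rangle_C\|(\Delta_{e\delta}),
\end{align*}
using $u_{\theta'}^\Kmc-u\ge \tfrac12(-u)$ on $\Delta_{\delta^{2}}$, say, after adjusting scales. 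The leftover is the mass of $\langle (dd^cu)^{i-1}\wedge\pi_1^*S\wedge\pi_2^*R\rangle_C\wedge\omega^{n-i}$ near $\Delta$. I would control it uniformly by Lemma \ref{lem:conti_conti} and compactness, or absorb it using $\log$ factors: the mass of $\Delta_\eta$ times $|\log\eta|$ is bounded by the $u$-integral at the previous scale.

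I expect this last scale-juggling to be the main obstacle. The issue is getting the integral of $-u$ over $\Delta_\delta$ dominated by the uniform gap $\sup_R(\Fc_\theta-\Fc)$ with $\theta$ tied to $\delta$, while the remaining mass near $\Delta$ is handled uniformly in $R\in\widetilde K_j$.
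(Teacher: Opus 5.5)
Your proposal is correct in substance but takes a different route from the paper, and the step you single out as the main obstacle closes with an inequality you already wrote down.

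\textbf{The ``if'' direction.} The paper gets finiteness on $K_j$ from upper semicontinuity and smooth approximants. It then gets continuity by splitting with a cutoff $\chi_{\Delta,\delta}$ that vanishes near $\Delta$ (off $\Delta$ the convergence $R_k\to R$ is harmless) and bounding the part near $\Delta$ by $\nu^i_{S,j}(\delta)$. You instead bound $\sup_{K_j}|\Fc^i_{S,j}-\Fc^i_{S,j,\theta}|$ by a multiple of $\nu^i_{S,j}(e|\theta|)$ and take a uniform limit of the continuous functions $\Fc^i_{S,j,\theta}$. This is equally valid, and it yields Proposition \ref{prop:unif_conv_poten_sup} at the same time. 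No mass term is needed: for $|\theta|\le 1/e$ one has $u^\Kmc_\theta\le 1+\log|\theta|\le 0$ on $\Delta_{e|\theta|}$, hence $0\ge u-u^\Kmc_\theta\ge u$ there.

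\textbf{The ``only if'' direction.} The paper applies Dini's theorem not to $\Fc^i_{S,j,\theta}$ but to the spatial truncations
\[
\Gc^i_{S,j,k}(R)=\int\chi_\Delta(k|x-y|)\,\chi^i_ju\,(dd^cu)^{i-1}\wedge\pi_1^*S\wedge\pi_2^*R\wedge\omega^{n-i}.
\]
These are continuous on $K_j$ for trivial reasons, since a smooth form supported away from $\Delta$ is paired with $\pi_1^*S\wedge\pi_2^*R$. They decrease to $\Fc^i_{S,j}$, and they satisfy $\int_{\Delta_\delta}-\chi^i_ju(\cdots)\le\Gc^i_{S,j,k}(R)-\Fc^i_{S,j}(R)$ as soon as $k\le 1/\delta$. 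So that direction needs neither Lemma \ref{lem:conti_conti}, nor an induction on $i$, nor any comparison of scales. Your Dini argument on $\Fc^i_{S,j,\theta}$ needs continuity of $\Fc^i_{S,j,\theta}$ on $K_j$. That in turn needs Lemma \ref{lem:conti_conti}, with $\nu^{i-1}_{S,j}\to0$ supplied by the previous inductive step, and you set this up correctly.

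\textbf{Closing your final step.} Compare at $\delta=|\theta|^2$ rather than $\delta=|\theta|$. Since $\chi\ge0$ gives $u^\Kmc_\theta\ge\log|\theta|$, on $\Delta_{|\theta|^2}$ you have $u^\Kmc_\theta-u\ge\log|\theta|-u\ge -u/2$. Also $u^\Kmc_\theta-u\ge0$ everywhere, because $\chi(t)\ge t$. Hence for $R\in\widetilde K_j$ with $\|R\|_j\le1$,
\[
\int_{\Delta_{|\theta|^2}}-\chi^i_ju\,(dd^cu)^{i-1}\wedge\pi_1^*S\wedge\pi_2^*R\wedge\omega^{n-i}\le 2\left(\Fc^i_{S,j,\theta}(R)-\Fc^i_{S,j}(R)\right).
\]
Therefore $\nu^i_{S,j}(|\theta|^2)\le 2\sup_{K_j}\left(\Fc^i_{S,j,\theta}-\Fc^i_{S,j}\right)\to0$ by your Dini step, with no leftover term.

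If you split at $\delta=|\theta|$ instead, the leftover is of size $|\log\delta|$ times the mass of $\Delta_\delta$. Your first suggested remedy (Lemma \ref{lem:conti_conti} plus compactness) would at best make that mass uniformly small, not its product with $|\log\delta|$, so you should drop it.
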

By the nature of the inductive definition, the proposition does not hold individually for each $i=1,\ldots ,n$ but collectively for $i=1,\ldots ,n$ together.
\begin{proof}
	Suppose that $\displaystyle \lim_{\delta\to 0}\nu_{S, j}^i(\delta)=0$ for $i=1, \ldots, n$. We first prove $\Cmc_{S, j}^i = K_j$ for every $i=1, \ldots, n$. We use induction on $i$. When $i=1$, from the compactness of $K_j$ and the continuity of $\Fc_{S, j}^1$, the finiteness is obvious. We assume that $\Cmc^{i-1}_{S,j}=K_j$. Let $R\in \Cmc^{i-1}_{S,j}$. Let $\delta_0\ll 1$.  Let $M_{K_j}>0$ be such that $\sup_{R'\in K_j}\|R'\|_j\le M_{K_j}$. Since smooth currents are dense in $K_j$, we can find a sequence $\left(R_k\right)_{k\in\N}$ of smooth currents in $\widetilde{K}_j$ such that $\displaystyle \lim_{k\to\infty}R_k=R$. By definition, we have $\displaystyle\Fc_{S, j}^i(R)=\lim_{\theta\to 0}\Fc_{S, j, \theta}^i(R)$. Also, since $\displaystyle \lim_{\delta\to 0}\nu_{S, j}^{i-1}(\delta)=0$, from Proposition \ref{prop:conti_usc_local}, we have
	\begin{align*}
		\Fc_{S, j}^i(R)&\ge \limsup_{k\to\infty}\Fc^i_{S, j}(R_k)=\limsup_{k\to\infty}\int \chi_j^iu(dd^cu)^{i-1}\pi_1^*S\wedge \pi_2^*R_k\wedge\omega^{n-i}\\
		&\ge \int_{\C^n\times\C^n\setminus \Delta_{\delta_0}} \chi_j^i u(dd^cu)^{i-1}\wedge\pi_1^*S\wedge\pi_2^*R\wedge\omega^{n-i} - M_{K_j}\nu_{S, j}^i(\delta_0).
	\end{align*}
	The first integral is finite because $u$ is smooth on $\C^n\times\C^n\setminus \Delta_{\delta_0}$ and $\chi_j^i$ is compactly supported. Hence, we have $\Cmc_S^i = K_j$.\medskip 
	
	We prove continuity. Let $i\in\{1, \ldots, n\}$ and $0<\delta\ll 1$. Since the space $K_j$ is metrizable, it is enough to show that for a current $R\in K_j$ with $\|R\|\le 1$ and a sequence $\left(R_k\right)_{k\in\N}\subset K_j$ such that $\displaystyle \lim_{k\to\infty}R_k=R$ and $\|R_k\|\le 1$, we have
	\begin{align*}
		\lim_{k\to\infty}\Fc_{S, j}^i(R_k)=\Fc_{S, j}^i(R).
	\end{align*}
	From the upper-semicontinuity of $\Fc_{S, j}^i$ on $K_j$ as in Proposition \ref{prop:conti_usc_local}, it is enough to show that $\displaystyle \lim_{k\to\infty}\left(\Fc_{S, j}^i(R)-\Fc_{S, j}^i(R_k)\right)\le 0$. From the negativity of the integrand, we have
	\begin{align*}
		\Fc_{S, j}^i(R)\le \int \chi_{\Delta, \delta}\chi_j^i u(dd^cu)^{i-1}\wedge\pi_1^*S\wedge\pi_2^*R\wedge\omega^{n-i}
	\end{align*}
	and as previously, we have
	\begin{align*}
		\Fc_{S, j}^i(R)&\ge \int \chi_{\Delta, \delta}\chi_j^i u(dd^cu)^{i-1}\wedge\pi_1^*S\wedge\pi_2^*R_k\wedge\omega^{n-i} - \nu_{S, j}^i(\delta),
	\end{align*}
	where $\chi_{\Delta, \delta}:\C^n\times\C^n\to [0, 1]$ is a smooth function such that $\chi_{\Delta, \delta}\equiv 1$ for $|x-y|\ge \delta$ and $\chi_{\Delta, \delta}=0$ near $\Delta$. Hence, we have
	\begin{align*}
		&\Fc_{S, j}^i(R)-\Fc_{S, j}^i(R_k)\le \int\chi_{\Delta, \delta} \chi_j^i u(dd^cu)^{i-1}\wedge\pi_1^*S\wedge\pi_2^*\left(R-R_k\right)\wedge\omega^{n-i} + \nu_{S, j}^i(\delta).
	\end{align*}
	Since $u$ is smooth outside $\Delta$ and $\chi_j^i$ is compactly supported, the first integral converges to $0$ as $k\to\infty$. Hence, we see that for every $\delta>0$, we have
	\begin{align*}
		\lim_{k\to\infty}\left(\Fc_{S, j}^i(R)-\Fc_{S, j}^i(R_k)\right)\le \nu_{S,j}^i(\delta).
	\end{align*}
	The assumption $\displaystyle \lim_{\delta\to 0}\nu_{S, j}^i(\delta)= 0$ implies $\displaystyle \lim_{k\to\infty}\big(\Fc_{S, j}^i(R)-\Fc_{S, j}^i(R_k)\big)\le 0$ as desired.\bigskip
	
	We prove the other direction. Let $\chi_\Delta:\R\to [0, 1]$ be a smooth function such that $\chi_\Delta(t)\equiv 0$ if $|t|\le 1$ and $\chi_\Delta(t)\equiv 1$ if $|t|\ge 2$. For $k\in\N$, we define
	\begin{align*}
		\Gc_{S,j,k}^i(R):=\int\chi_\Delta\left(k|x-y|\right)\chi_j^iu \left(dd^cu\right)^{i-1}\wedge\pi_1^*S\wedge \pi_2^*R\wedge \omega^{n-i}.
	\end{align*}
	Then, it is continuous on $K_j$. Since $\Fc_{S, j}^i$ is defined and continuous on the entire $K_j$, its integrand has no mass on $\Delta$. Hence, $\Gc_{S, j, k}^i(R)$ decreasingly converges to $\Fc_{S, j}^i(R)$ as $k\to\infty$. Since for $R\in \widetilde{K}_j$, we have
	\begin{align*}
		0&\ge \int_{\Delta_\delta}\chi_j^iu \left(dd^cu\right)^{i-1}\wedge\pi_1^*S\wedge \pi_2^*R\wedge \omega^{n-i}\ge\Fc_{S, j}^i(R)-\Gc_{S, j, k}^i(R),
	\end{align*}
	where $k<1/\delta$. For the proof, we need to prove that $\Gc_{S, j, k}^i$ converges to $\Fc_{S, j}^i$ uniformly on $K_j$. This is from Dini's theorem since $\Fc_{S, j}^i$ is assumed to be continuous on $K_j$ and $K_j$ is compact.
\end{proof}

The proof of the second assertion can be actually written as a version of uniform convergence as follows:
\begin{proposition}\label{prop:unif_conv_poten_sup}
	Let $j\in J$. Suppose that $\displaystyle \lim_{\delta\to 0}\nu_{S, j}^i(\delta)=0$ for $i=1, \ldots, n$. (Or equivalently, suppose that $\Fc_{S, j}^i$ is defined and continuous on $K_j$ for $i=1, \ldots, n$.) Then, the convergence of $\Fc_{S, j, \theta}^i$ to $\Fc_{S, j}^i$ is uniform on $K_j$ for $i=1, \ldots, n$.
\end{proposition}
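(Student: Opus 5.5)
We will show that $\Fc_{S,j,\theta}^i\to\Fc_{S,j}^i$ uniformly on $K_j$, and we will do it directly from the smallness of $\nu_{S,j}^i$ near $\Delta$. By Proposition \ref{prop:char_conti}, the hypotheses give two things for every $i$: $\Cmc_{S,j}^i=K_j$, and $\Fc_{S,j}^i$ is finite and continuous on $K_j$.

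\textbf{Step 1: an error bound for smooth $R$.} The key observation is that $u_\theta^\Kmc$ and $u$ coincide wherever $u-\log|\theta|\ge 1$, that is, on $\{|x-y|\ge e|\theta|\}$. On the complementary region we have $u\le u_\theta^\Kmc\le 0$. Consequently, for $R\in\widetilde K_j$ and $e|\theta|<\delta$,
\begin{align*}
0\le \Fc_{S,j,\theta}^i(R)-\Fc_{S,j}^i(R)\le \int_{\Delta_\delta}-\chi_j^iu\,(dd^cu)^{i-1}\wedge\pi_1^*S\wedge\pi_2^*R\wedge\omega^{n-i}\le M_{K_j}\,\nu_{S,j}^i(\delta),
\end{align*}
where $M_{K_j}$ bounds the masses on $K_j$. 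The last inequality follows by rescaling $R$ to unit mass. For smooth $R$, the integral defining $\Fc^i$ is an honest integral over $\Uf_j^{i-1}\setminus\Delta$, so the computation is legitimate.

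\textbf{Step 2: passing to all of $K_j$.} Now take an arbitrary $R\in K_j$ and approximate it by $R_k\in\widetilde K_j$ with $R_k\to R$. Both $\Fc_{S,j,\theta}^i$ and $\Fc_{S,j}^i$ are continuous on $K_j$.
\begin{itemize}
\item For $\Fc_{S,j}^i$, continuity is part of the hypothesis.
\item For $\Fc_{S,j,\theta}^i$, continuity follows from Lemma \ref{lem:conti_conti}. That lemma gives continuity of $R\mapsto\langle(dd^cu)^{i-1}\wedge\pi_1^*S\wedge\pi_2^*R\rangle_C$ on $\Uf_j^{i-1}$, using $\nu^{i-1}\to0$. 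It then applies against the smooth, compactly supported weight $\chi_j^iu_\theta^\Kmc\,\omega^{n-i}$. For $i=1$ the statement is plain weak convergence, since $u_\theta^\Kmc$ is smooth.
\end{itemize}
Passing to the limit $k\to\infty$ in the inequality of Step 1, we obtain $0\le\Fc_{S,j,\theta}^i(R)-\Fc_{S,j}^i(R)\le M_{K_j}\nu_{S,j}^i(\delta)$ for all $R\in K_j$. Letting $\delta\to0$, this yields uniform convergence.

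\textbf{Alternative route.} One could instead use Dini's theorem directly. Here $\Fc_{S,j,\theta}^i$ are continuous functions decreasing in $|\theta|$, and they converge pointwise to the continuous limit $\Fc_{S,j}^i$ on the compact set $K_j$. Applying Dini along any sequence $\theta_l\to0$, together with monotonicity in $|\theta|$, gives uniformity. I would mention this as a remark.

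\textbf{Main obstacle.} The delicate point is the continuity of $\Fc_{S,j,\theta}^i$ on all of $K_j$ for $i\ge2$. There is a subtlety: the weight $\chi_j^iu_\theta^\Kmc\omega^{n-i}$ is a test form but not a closed form. Lemma \ref{lem:conti_conti} is stated for test forms $f$, so this is fine. One must still check that the support of $\chi_j^i$ lies in $\Uf_j^{i-1}$, where the $C$-product is defined; this holds by construction of the localizing data.
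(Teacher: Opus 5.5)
Your proof is correct, but your main route differs from the paper's. The paper gives no separate argument. It only remarks that the Dini-type reasoning from the second half of the proof of Proposition \ref{prop:char_conti} yields the statement: the functions $\Fc^i_{S,j,\theta}$ are continuous on $K_j$ (Lemma \ref{lem:conti_conti}, since $\Cmc^{i-1}_{S,j}=K_j$), depend only on $|\theta|$, and decrease to the continuous function $\Fc^i_{S,j}$ on the compact set $K_j$, so Dini's theorem gives uniformity. That is exactly your ``alternative route''.

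Your main route instead proves the quantitative estimate $0\le \Fc^i_{S,j,\theta}(R)-\Fc^i_{S,j}(R)\le M_{K_j}\nu^i_{S,j}(\delta)$ whenever $e|\theta|<\delta$, up to the harmless constant comparing $\{|x-y|<e|\theta|\}$ with $\Delta_\delta$. You get it first on $\widetilde K_j$, from $u\le u^\Kmc_\theta$ with equality off $\{|x-y|<e|\theta|\}$ and $u^\Kmc_\theta\le 0$ on that set, and then on $K_j$ by density.

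\textbf{What your route buys.} It gives an explicit modulus of uniform convergence, namely $\nu^i_{S,j}$ itself, with no appeal to Dini. The density step also needs less than you use. Continuity of $\Fc^i_{S,j,\theta}$ plus mere upper semicontinuity of $\Fc^i_{S,j}$ (Proposition \ref{prop:conti_usc_local}) already gives $\Fc^i_{S,j,\theta}(R)-\Fc^i_{S,j}(R)\le\liminf_k\big(\Fc^i_{S,j,\theta}(R_k)-\Fc^i_{S,j}(R_k)\big)$. So your argument re-derives the continuity of $\Fc^i_{S,j}$ rather than consuming it.

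\textbf{What it costs.} The price is the step ``rescale $R$ to unit mass''. Since $\nu^i_{S,j}$ is a supremum over $R\in\widetilde K_j$ with $\|R\|_j\le 1$, bounding the near-diagonal integral by $M_{K_j}\nu^i_{S,j}(\delta)$ for every $R\in\widetilde K_j$ requires $\widetilde K_j$ to be stable under scaling down. This is automatic for the unit $*$-ball of Section \ref{sec:associativity}, where $\|R\|_j\le 1$ anyway. The paper makes the same move in Lemma \ref{lem:def_classic_prod} and Proposition \ref{prop:char_conti}, so this is not a gap relative to the paper. Still, the Dini route, started from the continuity form of the hypothesis, avoids it altogether.
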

As a direct corollary, we obtain
\begin{proposition}\label{prop:uniform_classical_prod}
	Let $\left(K_j\right)_{j\in J}$ be as in Definition \ref{def:conti_loc_supftn}. Let $S\in\Cc_p(X)$ admit continuous local potential functionals on $\left(K_j\right)_{j\in J}$. Then, for $j\in J$, the convergence of $\pi_1^*S\wedge \pi_2^*R\wedge \Kmc_\theta^i$ and $(dd^c u_\theta^\Tmc)\wedge\big\langle\pi_1^*S\wedge \pi_2^*R\wedge (dd^cu)^{i-1}\big\rangle_C$ to $\big\langle \pi_1^*S\wedge \pi_2^*R\wedge (dd^cu)^i\big\rangle_C$ as $|\theta|\to 0$ is uniform on $\Uf_j^i$ with respect to $R\in K_j$. 
\end{proposition}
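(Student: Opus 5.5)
The proof will pass to the limit in the test-form pairings, using the uniform estimates already obtained for the local potential functionals. Fix $i\in\{1,\ldots,n\}$, $j\in J$ and a smooth test $(2n-p-i,2n-p-i)$-form $f$ on $\Uf_j^i$. Write $T_R:=\big\langle \pi_1^*S\wedge \pi_2^*R\wedge (dd^cu)^{i-1}\big\rangle_C$, which is well defined on $\Uf_j^{i-1}$ for every $R\in K_j$ by Lemma \ref{lem:def_classic_prod}. The goal is
\[
\sup_{R\in K_j}\Big|\int f\,\Kmc_\theta^i\wedge\pi_1^*S\wedge\pi_2^*R-\int f\,\big\langle \pi_1^*S\wedge \pi_2^*R\wedge (dd^cu)^i\big\rangle_C\Big|\to 0,
\]
and the analogous statement with $dd^cu_\theta^\Tmc\wedge T_R$ in place of $\Kmc_\theta^i\wedge\pi_1^*S\wedge\pi_2^*R$.

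For $\Kmc_\theta^i$, closedness of $T_R$ on $\Uf_j^{i-1}$ lets me integrate by parts:
\[
\int f\,\Kmc^i_\theta\wedge \pi_1^*S\wedge\pi_2^*R=\int u^\Kmc_\theta\, dd^cf\wedge T_R .
\]
The limit object equals $\int u\,dd^cf\wedge T_R$. The error is therefore $\int (u_\theta^\Kmc-u)\,dd^cf\wedge T_R$. Since $u^\Kmc_\theta-u$ vanishes for $|x-y|\ge e|\theta|$ and satisfies $0\le u^\Kmc_\theta-u\le -u+O(1)$ near $\Delta$, I bound $dd^cf\le c\|f\|_{C^2}\,\omega^{n-i+1}$ (after splitting $dd^cf$ into positive and negative parts). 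Using $\chi_j^i\equiv1$ on $\supp f$, the error is then dominated, for $e|\theta|<\delta$, by $c\|f\|_{C^2}\,M_{K_j}\,\nu_{S,j}^i(\delta)$ plus an $O(\delta)$-mass term. For smooth $R$ this is immediate. For general $R\in K_j$ I transfer the bound by approximating with $R_m\in\widetilde K_j$ and invoking Lemma \ref{lem:conti_conti}: $T_{R_m}\to T_R$, and the integrals against $\mathbf 1_{\Delta_\delta}$-type cutoffs pass to the limit via a smooth cutoff $\chi_{\Delta,\delta}$, exactly as in the proof of Proposition \ref{prop:char_conti}. Since $\nu_{S,j}^i(\delta)\to0$ by Proposition \ref{prop:char_conti}, the error tends to $0$ uniformly in $R\in K_j$. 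Equivalently, one may cite Proposition \ref{prop:unif_conv_poten_sup}, with $\omega^{n-i}$ replaced by general smooth forms through the positivity domination.

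For $u^\Tmc_\theta$, I run the same argument: $\int f\,dd^cu^\Tmc_\theta\wedge T_R=\int u^\Tmc_\theta\,dd^cf\wedge T_R$, and $0\le u^\Tmc_\theta-u$ is decreasing in $|\theta|$. The new point is that $u^\Tmc_\theta-u$ is not supported in a shrinking neighborhood of $\Delta$. On $\{|x-y|\ge\delta\}$, however, it is at most $\tfrac12\log(1+|\theta|^2/\delta^2)$, which tends to $0$ uniformly. Inside $\Delta_\delta$ it is bounded by $-u+\tfrac12\log 2+\max(0,\log|\theta|)$-type quantities, that is, by $-u$ plus a constant. The outer part therefore contributes $O(|\theta|^2/\delta^2)\,M_{K_j}$, uniformly in $R$, because the masses of $T_R$ on compacts away from $\Delta$ are uniformly bounded by the continuity in Lemma \ref{lem:conti_conti} and the compactness of $K_j$. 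The inner part is again controlled by $\nu^i_{S,j}(\delta)$ plus the mass of $T_R$ on $\Delta_\delta$. I choose $\delta$ first and then $\theta$.

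The main obstacle is the uniform control of the mass of $T_R\wedge\omega^{n-i+1}$ on $\Delta_\delta\cap\supp f$ for nonsmooth $R$, including the absence of mass on $\Delta$. I would handle it by noting that $-u\ge -\log\delta$ on $\Delta_\delta$, so this mass is at most $\nu^i_{S,j}(\delta)/\log(1/\delta)$ after approximation. The approximation step itself uses the lower semicontinuity of integrals of the positive function $-\chi_j^iu$ against weakly converging positive measures, applied over the open set $\Delta_\delta$.
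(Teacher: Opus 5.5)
Your argument is correct and is essentially the paper's route: the paper records this as a direct corollary of Proposition \ref{prop:unif_conv_poten_sup}. That means integrating by parts against the closed current $T_R$ and dominating $dd^cf$ by a multiple of $\omega^{n-i}$, so the error is controlled by $\Fc^i_{S,j,\theta}(R)-\Fc^i_{S,j}(R)$ uniformly in $R\in K_j$, which is exactly your bound via $\nu^i_{S,j}$. Your near/far splitting for $u_\theta^\Tmc$, where $u_\theta^\Tmc-u$ is not supported near $\Delta$, spells out a step the paper leaves implicit. One bookkeeping slip: since $R$ has bidegree $(n-p+1,n-p+1)$, the test form $f$ has bidegree $(n-i-1,n-i-1)$ and the reference measure is $T_R\wedge\omega^{n-i}$, not $T_R\wedge\omega^{n-i+1}$.
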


Using Propositions \ref{prop:char_conti} and \ref{prop:unif_conv_poten_sup}, we can characterize continuous local potential functionals as in \cite{DNV}.
\begin{theorem}\label{thm:main_char}
	Let $X$ be a complex manifold of dimension $n$ with localizing data $\big(\big(U_j^i, \xi_j^i\big)\big)_{i=0, 1, \ldots, n, j\in J}$. Let $\left(K_j\right)_{j\in J}$ be as in Definition \ref{def:conti_loc_supftn}. Let $S\in\Cc_p(X)$ and $\left(\nu_{S,j}^i\right)_{i=1, \ldots, n, j\in J}$ the associated functions as above. Then, $S$ admits continuous local potential functionals on $\left(K_j\right)_{j\in J}$ if and only if $\displaystyle \lim_{\varepsilon\to 0} \nu_{S,j}^i(\varepsilon)=0$ for $i=1, \ldots, n$ and $j\in J$. In this case, for each $i=1, \ldots, n$ and $j\in J$, $\Fc_{S, j, \theta}^i$ uniformly converges to $\Fc_{S, j}^i$ on $K_j$ as $|\theta|\to 0$.
\end{theorem}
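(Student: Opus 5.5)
The plan is to assemble Theorem \ref{thm:main_char} from Propositions \ref{prop:char_conti} and \ref{prop:unif_conv_poten_sup}, applied separately for each $j\in J$. By Definition \ref{def:conti_loc_supftn}, $S$ admits continuous local potential functionals on $(K_j)_{j\in J}$ exactly when, for every $j\in J$ and every $i=1,\ldots,n$, the functional $\Fc_{S,j}^i$ is defined (finite) on all of $K_j$ and continuous there with respect to the $*$-topology on $U_j^0$. The definition is stated for each index pair, but the characterization in Proposition \ref{prop:char_conti} only holds collectively in $i$, so I will fix $j$ and treat all $i=1,\ldots,n$ at once.

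\textbf{The equivalence.} Fix $j\in J$. Suppose $\Fc_{S,j}^i$ extends continuously to $K_j$ for all $i$. Then Proposition \ref{prop:char_conti} gives $\lim_{\delta\to0}\nu^i_{S,j}(\delta)=0$ for all $i$.

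Conversely, suppose $\lim_{\delta\to0}\nu^i_{S,j}(\delta)=0$ for all $i$. The inductive half of Proposition \ref{prop:char_conti} gives $\Cmc^i_{S,j}=K_j$ for each $i$ and continuity of $\Fc^i_{S,j}$ on $K_j$. One subtlety needs checking: the definition asks that $\Fc_{S,j}^i$ \emph{extend} continuously from the smooth part $\widetilde K_j$, while the proposition gives continuity of the functional defined on all of $K_j$ by decreasing limits. These agree. The functional restricted to $\widetilde K_j$ is a restriction of a continuous function on $K_j$, and $\widetilde K_j$ is dense in $K_j$, so the continuous extension exists and is unique, namely $\Fc^i_{S,j}$ itself. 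Since $j$ was arbitrary, the equivalence holds for all $j\in J$.

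\textbf{Uniform convergence.} For the final assertion I invoke Proposition \ref{prop:unif_conv_poten_sup}. To make the plan self-contained, here is the argument I would use. For fixed $j$ and $i$, the functionals $\Fc^i_{S,j,\theta}$ decrease as $|\theta|\downarrow0$. By Lemma \ref{lem:conti_conti}, together with the smoothness of $u^\Kmc_\theta$ and continuity of the product $\langle(dd^cu)^{i-1}\wedge\pi_1^*S\wedge\pi_2^*R\rangle_C$ in $R$ (for $i=1$ the product is simply $\pi_1^*S\wedge\pi_2^*R$), each $\Fc^i_{S,j,\theta}$ is continuous on $K_j$. Their pointwise limit $\Fc^i_{S,j}$ is continuous by the first part, and $K_j$ is compact and metrizable by Proposition \ref{prop:compactness}.

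Dini's theorem then gives uniform convergence along any sequence $\theta_l\to0$ with $|\theta_l|$ decreasing. Monotonicity in $|\theta|$ upgrades this to uniform convergence as $|\theta|\to0$: for $|\theta|\le|\theta_l|$ we have $0\le \Fc^i_{S,j,\theta}-\Fc^i_{S,j}\le \Fc^i_{S,j,\theta_l}-\Fc^i_{S,j}$.

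\textbf{Main obstacle.} The step needing care is the continuity of $\Fc^i_{S,j,\theta}$ on all of $K_j$ for $i\ge2$. That continuity relies on Lemma \ref{lem:conti_conti}, which uses $\nu^{i-1}_{S,j}\to0$, so it is available in the direction where this is assumed. There is also a technical point: $\chi^i_j u^\Kmc_\theta$ is only a continuous (smooth) test function times a current that converges weakly. One must check that this converges, using that $\chi^i_j$ has compact support in $\Uf^{i-1}_j$, where Lemma \ref{lem:conti_conti} provides convergence of the products.
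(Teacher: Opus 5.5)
Your proposal is correct and follows the paper's route. The paper gives no separate argument for this theorem: it is assembled from Proposition \ref{prop:char_conti}, applied for each fixed $j$ and collectively in $i$, and Proposition \ref{prop:unif_conv_poten_sup}, whose justification is the same Dini-type argument on the compact set $K_j$ that you sketch, using continuity of $\Fc^i_{S,j,\theta}$ from Lemma \ref{lem:conti_conti} together with monotonicity in $|\theta|$. As in the paper, your forward direction implicitly reads ``extends continuously'' in Definition \ref{def:conti_loc_supftn} as saying that the pointwise-defined $\Fc^i_{S,j}$ is finite and continuous on $K_j$, which is exactly the hypothesis Proposition \ref{prop:char_conti} requires.
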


In \cite{Ahn25}, Condition (I) for the Dinh-Sibony product was introduced. In this section, we only consider the intersection of two positive closed currents.
\begin{definition}\label{def:conditionI}
	Let $D$ be a bounded simply connected domain with smooth boundary in $\C^n$. Let $S_i\in \Cc_{s_i}(X)$ for $i=1, 2$, where $1\le s:=s_1+s_2\le n$. We say that $S_1$ and $S_2$ satisfy Condition (I) if
	\begin{align*}
		u\in L_\loc^1\left(\left\langle \pi_1^*S_1\wedge \pi_2^*\left(S_2\wedge \omega_\euc^{2n-s-i+1}\right)\wedge (dd^cu)^{i-1} \right\rangle_C\right)
	\end{align*}
	in $D^2$ inductively from $i=1$ through $n$.
\end{definition} 


\begin{proposition}\label{prop:ConditionI_under_conti}
	Let $X$, $\left(\left(U_j^i, \xi_j^i\right)\right)_{i=0, 1, \ldots, n, j\in J}$ and $\left(K_j\right)_{j\in J}$ be as above. Let $S\in\Cc_p(X)$ and $\left(\nu_{S,j}^i\right)_{i=1, \ldots, n, j\in J}$ the associated functions as above. Let $j\in J$ and $q\in\{1, \ldots, n-p\}$. Let $R\in\Cc_q(X)$ be such that $R\wedge \alpha\in K_j$ for some smooth positive closed $(n-p-q+1, n-p-q+1)$-form $\alpha$ on $U_j^0$. Suppose that $\Fc_{S, j}^i$ is defined and continuous on $K_j$ for $i=1, \ldots, n$. Then, $S$ and $R$ satisfy Condition (I) on $U_j^n$.
\end{proposition}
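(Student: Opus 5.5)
We argue inductively on $i=1,\ldots,n$, with the domain $D=U_j^n$ in Definition \ref{def:conditionI}. Write $\beta_i:=\omega_\euc^{2n-p-q-i+1}$ for the form appearing there. The goal is to show that $u$ is locally integrable with respect to $\langle \pi_1^*S\wedge \pi_2^*(R\wedge\beta_i)\wedge(dd^cu)^{i-1}\rangle_C$ on $(U_j^n)^2$. By Lemma \ref{lem:induction}, integrability at stage $i$ makes the next product $\langle\cdot\wedge(dd^cu)^{i}\rangle_C$ well defined, so the induction closes.

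\textbf{Comparison with $K_j$.} First compare $R\wedge\beta_i$ with the current $R\wedge\alpha\in K_j$. Because of the mixed degrees, local integrability of a negative function against $\langle \pi_1^*S\wedge\pi_2^*(R\wedge\beta)\wedge (dd^cu)^{i-1}\rangle_C$ only needs to be tested against $\omega^{N}$-type forms on compact subsets. The form $\omega^{n-i}$ splits as a sum of terms $\pi_1^*\omega_\euc^{a}\wedge\pi_2^*\omega_\euc^{b}$, and the $\pi_2^*$ factors can be absorbed into $\pi_2^*$ of the $R$-side.

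The key inequality I would aim for is that on a compact subset of $U_j^n$,
\[
R\wedge\omega_\euc^{m}\le C\, R\wedge\alpha\wedge\omega_\euc^{m'}
\]
for suitable degrees. This holds provided $\alpha$ is strictly positive there. If $\alpha$ is merely positive, I would instead replace $R\wedge\alpha$ by $R\wedge(\alpha+\epsilon\omega_\euc^{n-p-q+1})$. This is where I expect the main obstacle: one must either assume or arrange that $K_j$ contains such a current, or else use the dependence on $\alpha$ only through its degree. In the intended application, $\alpha$ should be a strictly positive form like $\omega_\euc^{n-p-q+1}$, and I would use that. With a strictly positive $\alpha$, the measures in Condition (I) are dominated, after cutting off by $\chi_j^i$ (which is $\equiv1$ near $\overline{\Uf_j^n}$), by a constant times the integrand defining $\Fc^i_{S,j}(R\wedge\alpha)$. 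Here we use that the $\langle\cdot\rangle_C$ products are limits of $\Kmc_\theta^{i-1}\wedge(\cdot)$, and positivity is preserved under these limits.

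\textbf{Integrability.} Next, by Proposition \ref{prop:char_conti}, continuity of $\Fc^i_{S,j}$ on $K_j$ gives $\Cmc^i_{S,j}=K_j$. Hence $\Fc^i_{S,j}(R\wedge\alpha)>-\infty$, i.e.
\[
\int \chi_j^i\,u\,\big\langle(dd^cu)^{i-1}\wedge\pi_1^*S\wedge\pi_2^*(R\wedge\alpha)\big\rangle_C\wedge\omega^{n-i}>-\infty .
\]
The induction hypothesis and Lemma \ref{lem:def_classic_prod} guarantee that the $C$-products involved exist, so the comparison above is between genuine positive measures. Combining this with the domination, and using $u\le0$ locally after subtracting a constant on bounded sets, yields $u\in L^1_{\loc}$ for the Condition (I) measure at stage $i$ on $(U_j^n)^2$.

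\textbf{Induction step.} Lemma \ref{lem:induction} then gives the product at stage $i+1$, and we proceed to the next $i$. The only delicate point besides the positivity of $\alpha$ is the mismatch of form degrees between $\omega^{n-i}$ and $\beta_i$. I would handle it by expanding $\omega^{n-i}$ binomially and keeping only the pure $\pi_2^*$ terms that match $\beta_i$, all other terms being nonnegative.
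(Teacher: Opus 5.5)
The paper states this proposition without proof, so I am judging your argument on its own. Your strategy is the natural one: bound the Condition (I) measures by the integrand of $\Fc^i_{S,j}(R\wedge\alpha)$, which is finite because $\Fc^i_{S,j}$ is real-valued on $K_j$.

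\textbf{Positivity of $\alpha$.} Your worry here is justified. If $\alpha$ is only weakly positive, the hypotheses can be empty: take $\alpha=0$ and $K_j=\{0\}$. Meanwhile $S=R=[H]$, for a hyperplane $H\subset\C^n$, violates Condition (I) at $i=n$, because the $C$-product then charges $\Delta\cap(H\times H)$, where $u=-\infty$. So the statement must be read with $\alpha\ge c\,\omega_\euc^{n-p-q+1}$ near $\overline{U_j^1}$. That is the situation of Section~\ref{sec:associativity}, where $K_j$ is the whole unit ball and $\alpha$ can be taken to be a multiple of $\omega_\euc^{n-p-q+1}$.

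\textbf{The gap: the induction step.} Write $T_i:=\langle(dd^cu)^{i-1}\wedge\pi_1^*S\wedge\pi_2^*R\rangle_C$.
\begin{enumerate}
\item Your binomial argument only gives $T_i\wedge\pi_2^*\omega_\euc^{2n-p-q-i+1}\le C\,T_i\wedge\pi_2^*\alpha\wedge\omega^{n-i}$. That is, it controls only the pure $\pi_2^*$ part.
\item Lemma~\ref{lem:induction}, applied to $\pi_1^*S\wedge\pi_2^*R$, needs $u\in L^1_{\loc}$ for $T_i\wedge\omega^{2n-p-q-i+1}$, with the full form $\omega=\pi_1^*\omega_\euc+\pi_2^*\omega_\euc$.
\item Applied instead to $\pi_1^*S\wedge\pi_2^*(R\wedge\beta_{i+1})$, the lemma needs integrability against $\langle(dd^cu)^{i-1}\wedge\pi_1^*S\wedge\pi_2^*(R\wedge\beta_{i+1})\rangle_C\wedge\omega$, not $\wedge\,\pi_2^*\omega_\euc$. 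It also needs that product to exist, and stage $i$ never produced it.
\item The expansion of $\omega^{2n-p-q-i+1}$ contains terms $T_i\wedge\pi_1^*\omega_\euc^{a}\wedge\pi_2^*\omega_\euc^{b}$ with $a>n-i$. For $i>p$ these are in general nonzero. They never occur in $T_i\wedge\pi_2^*\alpha\wedge\omega^{n-i}$, because $\alpha$ is pulled back from the second factor.
\item So ``all other terms being nonnegative'' gives pure $\le$ full, which is the wrong direction here.
\item Lemma~\ref{lem:def_classic_prod} only produces products for currents in $\Cmc^{i-1}_{S,j}$, i.e.\ for $R\wedge\alpha$, not for $R$ or $R\wedge\beta_i$. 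Your claim that ``the $C$-products involved exist'' is therefore unsupported.
\end{enumerate}

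\textbf{Repair.} Run the induction on $T'':=\pi_1^*S\wedge\pi_2^*(R\wedge\omega_\euc^{n-p-q+1})$, of bidegree $(n+1,n+1)$.
\begin{itemize}
\item For $T''$, Lemma~\ref{lem:induction} asks for integrability against $\langle(dd^cu)^{k-1}\wedge T''\rangle_C\wedge\omega^{n-k}$. This carries exactly the same $\omega^{n-k}$ as $\Fc^k_{S,j}$, so no mixed terms arise.
\item This measure is at most $C$ times the integrand of $\Fc^k_{S,j}(R\wedge\alpha)$, using only $\omega_\euc^{n-p-q+1}\le C\alpha$. Indeed $\Kmc_\theta^{k-1}\wedge\pi_1^*S\wedge\pi_2^*\big(R\wedge(C\alpha-\omega_\euc^{n-p-q+1})\big)\wedge\omega^{n-k}\ge0$, and positivity passes to the limit $\theta\to0$.
\item This constructs all the products. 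It also gives Condition (I) as printed in Definition~\ref{def:conditionI}, since $\pi_2^*\beta_i\le\pi_2^*\omega_\euc^{n-p-q+1}\wedge\omega^{n-i}$.
\end{itemize}
If Condition (I) is instead meant with the full $\omega^{2n-p-q-i+1}$ (the form Lemma~\ref{lem:induction} needs for $\pi_1^*S\wedge\pi_2^*R$ itself), the mixed terms in item 4 need an additional argument. Neither your proposal nor the finiteness of $\Fc^i_{S,j}(R\wedge\alpha)$ supplies one.
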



%
%
Together with \cite[Theorem 1.1]{Ahn25}, we get

\begin{theorem}\label{thm:intersection}
	We assume $X$, $\left(\left(U_j^i, \xi_j^i\right)\right)_{i=0, 1, \ldots, n, j\in J}$, $\left(K_j\right)_{j\in J}$, $S\in\Cc_p(X)$ and $\big(\nu_{S,j}^i\big)_{i=1, \ldots, n, j\in J}$ as above. Let $q\in \{1, \ldots, n-p\}$ and $R\in\Cc_q(X)$ be such that for each $j\in J$, $R\wedge \alpha_j\in K_j$ for some smooth positive closed $(n-p-q+1, n-p-q+1)$-form $\alpha_j$ on $U_j^0$. Suppose that $S$ admits continuous local potential functionals on $\left(K_j\right)_{j\in J}$. Then, the Dinh-Sibony product of $S$ and $R$ is well defined on $X$.
\end{theorem}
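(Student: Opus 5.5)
This follows by combining Proposition \ref{prop:ConditionI_under_conti} with \cite[Theorem 1.1]{Ahn25}; the work is in checking that the pair $(S,R)$ fits the setting of the latter on every chart. The plan has three steps.

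\textbf{Step 1: Condition (I) on each chart.} Fix $j\in J$. By hypothesis $S$ admits continuous local potential functionals on $(K_j)_{j\in J}$. By Definition \ref{def:conti_loc_supftn} and Proposition \ref{prop:char_conti}, each $\Fc_{S,j}^i$ is then defined and continuous on $K_j$ for $i=1,\ldots,n$. Theorem \ref{thm:main_char} gives the equivalent description $\lim_{\delta\to0}\nu^i_{S,j}(\delta)=0$. Since $R\wedge\alpha_j\in K_j$, Proposition \ref{prop:ConditionI_under_conti} applies directly, and $S$ and $R$ satisfy Condition (I) on $U_j^n$.

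\textbf{Step 2: apply \cite[Theorem 1.1]{Ahn25}.} The sets $(U_j^n)_{j\in J}$ form an open covering of $X$ by the definition of localizing data, and each $U_j^n$ is biholomorphic to a bounded simply connected smooth domain. Condition (I) holds on every member of this covering. That theorem states that Condition (I) on such a covering implies that the tangent current of $\pi_1^*S\wedge\pi_2^*R$ along $\Delta\subset X\times X$ exists, is unique, and has minimal $h$-dimension. Its shadow is then well defined, and this shadow is by Definition \ref{def:DSproduct} the Dinh-Sibony product $(S\wedge R)_{DS}$ on $X$.

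\textbf{Step 3: compatibility on overlaps.} The local tangent currents on $\pi_\Delta^{-1}(U_j^n\cap U_{j'}^n)$ must agree. This follows from Remark \ref{rmk:independence_of_admissible_map}, which gives independence of the admissible map. The shadows are independent of the form $\Omega$ by Remark \ref{rmk:indep_shadow}. Together these let the local products glue into a global current.

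\textbf{Main obstacle.} The delicate point is Step 2: matching the exact hypotheses of \cite[Theorem 1.1]{Ahn25}. Condition (I) is stated with $S_2\wedge\omega_\euc^{2n-s-i+1}$, whereas Proposition \ref{prop:ConditionI_under_conti} is obtained from currents of the form $R\wedge\alpha_j$. One must confirm that the integrability of $u$ coming from continuity is the integrability Condition (I) requires. The route is through the domination $\alpha_j\le c\,\omega_\euc^{n-p-q+1}$ on the relatively compact chart together with the inductive Lemma \ref{lem:induction}. Also, the relevant chart is $U_j^n$, not $U_j^0$, which is why the shrinking of the covering is needed.
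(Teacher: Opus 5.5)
Your argument is the paper's own: Proposition~\ref{prop:ConditionI_under_conti} gives Condition (I) for $S$ and $R$ on each $U_j^n$, and \cite[Theorem 1.1]{Ahn25} then yields $(S\wedge R)_{DS}$ on $X$; your Step 3 is harmless bookkeeping. The ``main obstacle'' you flag is already settled by the conclusion of that proposition, and the route you sketch for it runs backwards: $\alpha_j\le c\,\omega_\euc^{n-p-q+1}$ only shows that Condition (I)-type integrability forces $\Fc^i_{S,j}(R\wedge\alpha_j)>-\infty$, whereas passing from continuity to Condition (I) needs at least a lower bound $\alpha_j\ge c\,\omega_\euc^{n-p-q+1}$ (strict positivity of $\alpha_j$, which the statement implicitly requires, since for $\alpha_j=0$ the hypothesis $R\wedge\alpha_j\in K_j$ says nothing about $R$).
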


We end this section by generalizing the so-called domination principle introduced in \cite{DNV}. Together with Section [...] below, when restricted to compact K\"ahler manifolds, it coincides with the domination principle for continuous superpotentials as in \cite{DNV}. The proof is straightforward as in \cite{DNV}.
\begin{theorem}\label{thm:domination}
	Let $X$ be a complex manifold of $n$. Let $S$ and $S'$ be positive closed $(p, p)$-currents on $S$ such that $S'\le S$ in the sense of currents. Suppose that $S$ admits continuous local potential functionals. Then, $S'$ also admits continuous local potential functionals.
\end{theorem}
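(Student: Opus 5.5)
The plan is to use the characterization in Theorem \ref{thm:main_char}. It says that $S$ admits continuous local potential functionals on $(K_j)_{j\in J}$ exactly when $\lim_{\delta\to 0}\nu_{S,j}^i(\delta)=0$ for every $i$ and $j$. So it suffices to show $\nu_{S',j}^i(\delta)\le\nu_{S,j}^i(\delta)$ for all $i$, $j$ and small $\delta$. The conclusion then follows at once from the same theorem, applied in the other direction to $S'$, with the same localizing data and the same compact sets $K_j$.

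The key step is a pointwise comparison on $\Delta_\delta$. Fix $R\in\widetilde K_j$ with $\|R\|_j\le 1$. Since $R$ is smooth, the integrand
$$-\chi_j^iu\,(dd^cu)^{i-1}\wedge\pi_1^*S\wedge\pi_2^*R\wedge\omega^{n-i}$$
is well defined off $\Delta$. Off $\Delta$, $u=\log|x-y|$ is smooth and plurisubharmonic, and $(dd^cu)^{i-1}$ is a smooth positive closed form. For $\delta<1$ we have $-\chi_j^iu\ge 0$ on $\Delta_\delta$. Since $S-S'\ge 0$, the pullback $\pi_1^*(S-S')$ is a positive current. Wedging it with the smooth positive forms $\pi_2^*R$, $(dd^cu)^{i-1}$ and $\omega^{n-i}$ gives a positive measure on $\Delta_\delta\setminus\Delta$. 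Wedge products of a positive current with smooth positive forms are positive, and here the top-degree products of positive $(1,1)$-type forms and of pullbacks from the two factors are genuinely positive, i.e. strongly positive. Multiplying by the nonnegative function $-\chi_j^iu$ gives
$$\int_{\Delta_\delta}-\chi_j^iu\,(dd^cu)^{i-1}\wedge\pi_1^*S'\wedge\pi_2^*R\wedge\omega^{n-i}\le\int_{\Delta_\delta}-\chi_j^iu\,(dd^cu)^{i-1}\wedge\pi_1^*S\wedge\pi_2^*R\wedge\omega^{n-i}.$$
Taking the supremum over $R$ yields $\nu_{S',j}^i(\delta)\le\nu_{S,j}^i(\delta)$. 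The integral over $\Delta$ itself does not matter: the paper's formulas express these integrals on the complement of $\Delta$ for smooth $R$, and $u$ is not defined there anyway.

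The main obstacle I expect is the positivity check. It requires that $\pi_1^*S'\wedge\pi_2^*R\wedge(dd^cu)^{i-1}\wedge\omega^{n-i}$ be dominated by the same expression with $S$. Weak versus strong positivity subtleties could arise, because $\pi_2^*R$ is only assumed positive. I would handle this by writing, locally, $R$ as a smooth combination of strongly positive forms, using that $R$ is smooth. Alternatively, I would note that the product of a positive current with a smooth positive form is well defined whenever all factors except one are strongly positive. Here $(dd^cu)^{i-1}$ and $\omega^{n-i}$ are strongly positive, and $\pi_1^*S$, $\pi_2^*R$ live on different factors, so $\pi_1^*S\wedge\pi_2^*R$ is positive.

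A minor further point: Theorem \ref{thm:main_char} also requires the sets $\mathcal C^i_{S',j}$, on which the functionals are finite, to be well defined. These are handled inside that theorem once the $\nu$-condition holds, so nothing more is needed.
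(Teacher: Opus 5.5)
Your proposal is correct and is essentially the argument the paper intends when it calls the proof ``straightforward as in \cite{DNV}'': $S-S'\ge 0$ gives $\nu^i_{S',j}(\delta)\le\nu^i_{S,j}(\delta)$ for all $i,j$ and small $\delta$, and Theorem~\ref{thm:main_char} (via Proposition~\ref{prop:char_conti}) then transfers continuity to $S'$ on the same $(K_j)_{j\in J}$. The one thing to fix is your justification that $\pi_1^*(S-S')\wedge\pi_2^*R\wedge(dd^cu)^{i-1}\wedge\omega^{n-i}$ is positive. This is the same standing fact the paper uses throughout (e.g.\ for the monotonicity of $\Fc^i_{S,j,\theta}$ in $\theta$), and it holds for instance when $R$ is strongly positive. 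However, your ``different factors'' argument does not prove it for merely weakly positive forms: restricting $\pi_1^*\alpha\wedge\pi_2^*\beta$ to the diagonal gives $\alpha\wedge\beta$, which can be negative. Likewise, writing $R$ as a combination of strongly positive forms only helps if the coefficients are nonnegative.
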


\begin{example}
	Classical pluripotential theory proves that $S=\bigwedge_{i=1}^pdd^c u_i$ on a domain in $\C^n$ admits continuous local potential functionals, where $u_i$'s are bounded plurisubharmonic functions.
\end{example}

Another example is a positive closed current with continuous superpotentials.

\begin{example}
	Let $X$ and $Y$ be two complex manifolds of dimension $n$. Let $f:X\to Y$ be a surjective finite-to-one holomorphic map. Then, for any smooth positive closed $(p, p)$-form $\varphi$, $f_*\varphi$ admits continuous local potential functionals.
\end{example}

\section{Associativity of the Dinh-Sibony product}\label{sec:associativity}
In this section, we look into the associativity property for the Dinh-Sibony product in connection with the continuity of local potential functionals.\medskip

Let $X$ be a complex manifold of dimension $n$. Let $\big(\big(U_j^i, \xi_j^i\big)\big)_{i=0, 1, \ldots, n, j\in J}$ be its localizing data. For each $j\in J$, we consider 
$$K_j=\{R\in\Cc_{n-p+1}(U_1^1; U_j^0): \|R\|_{*, j}\le 1\}.$$
Let $s_1\in\{1, \ldots, n\}$ and $S_1\in\Cc_{s_1}(X)$ be such that $S_2$ admits continuous local potential functionals on $\left(K_j\right)_{j\in J}$. Let $\left(\nu_{S_1,j}^i\right)_{i=1, \ldots, n, j\in J}$ be the associated functions as in Section \ref{sec:conti}. Let $s_2, s_3\in \{1, \ldots, n\}$ be such that $1\le s_1+s_2+s_3\le n$ and let $S_2\in \Cc_{s_2}(X)$ and $S_3\in\Cc_{s_3}(X)$. Theorem \ref{thm:intersection} implies that $\big(S_1\wedge S_2\big)_{DS}$ is well defined on $X$.\medskip 

Below is Theorem \ref{thm:associativity}, which is the main result of this work.
\begin{theorem}
	Suppose that $S_1$ admits continuous local potential functionals on $\left(K_j\right)_{j\in J}$ and that $S_2$ and $S_3$ satisfy Condition (I) on $U_j^0$ for each $j\in J$. Then, the Dinh-Sibony product $\left((S_1\wedge S_2)_{DS}\wedge S_3\right)_{DS}$ of $\big(S_1\wedge S_2\big)_{DS}$ and $S_3$, and the Dinh-Sibony product $(S_1\wedge S_2\wedge S_3)_{DS}$ of $S_1$, $S_2$ and $S_3$ are well defined for each $j\in J$, and we have
	\begin{align*}
		\left(S_1\wedge (S_2\wedge S_3)_{DS}\right)_{DS}=\left((S_1\wedge S_2)_{DS}\wedge S_3\right)_{DS}=(S_1\wedge S_2\wedge S_3)_{DS}\textrm{ on }X.
	\end{align*}
\end{theorem}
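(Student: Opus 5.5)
The plan is to reduce everything to local computations on each $\Uf_j^i$ and $U_j^i\times U_j^i\times U_j^i$. There we use the description of the Dinh-Sibony product in \cite{Ahn25}: when the relevant Condition (I) holds, the product is given by the iterated classical products $\langle (dd^cu)^{k}\wedge\cdots\rangle_C$ pushed forward to the diagonal. For the triple product we work on $X^3$ with the diagonal $\Delta_3$, which has codimension $2n$. Near $\Delta_3$ we use the coordinates $(x,y,z)$ and the potentials $u_{12}=\log|x-y|$ and $u_{23}=\log|y-z|$. Locally, $\Delta_3$ is the intersection of $\{x=y\}$ and $\{y=z\}$. So the tangent current of $\pi_1^*S_1\wedge\pi_2^*S_2\wedge\pi_3^*S_3$ along $\Delta_3$ can be computed in two stages: first along $\{x=y\}$, then along $\{y=z\}$.

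\textbf{Step 1.} We first show that $(S_1\wedge S_2)_{DS}$ and $S_3$ satisfy Condition (I). Locally, $(S_1\wedge S_2)_{DS}$ equals $(\pi_1)_*\big(\langle\pi_1^*S_1\wedge\pi_2^*S_2\wedge(dd^cu)^n\rangle_C\big)$. Hence the integrals defining Condition (I) for $((S_1\wedge S_2)_{DS},S_3)$ are limits of expressions of the form $\int \chi u_{23}(dd^cu_{23})^{i-1}\wedge\langle(dd^cu_{12})^n\wedge\pi_1^*S_1\wedge\pi_2^*S_2\rangle_C\wedge\pi_3^*S_3\wedge\omega^{\cdot}$. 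We slice in the $x$-variable, or equivalently view $R:=\pi^*(S_2\wedge S_3$-type data$)\wedge$smooth forms as test currents for $S_1$. Then $\pi_2^*S_2\wedge(\text{stuff in }y,z)$, after integrating out $z$, becomes an element of $\Cc_{n-s_1+1}(U_j^1;U_j^0)$ of controlled mass. This uses Condition (I) for $(S_2,S_3)$ to guarantee finiteness and bounded $*$-norm. Rescaled to norm at most $1$, it lies in $K_j$. By Proposition~\ref{prop:uniform_classical_prod} and Theorem~\ref{thm:main_char}, the convergence in $\theta$ of the $S_1$-products is uniform over $K_j$. This lets us interchange the limits in $\theta_{12}$ and $\theta_{23}$, the same switch of limits as in Lemma~\ref{lem:conti_conti}. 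Theorem~\ref{thm:intersection} together with Proposition~\ref{prop:ConditionI_under_conti} then gives well-definedness of $((S_1\wedge S_2)_{DS}\wedge S_3)_{DS}$.

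\textbf{Step 2.} We identify the triple product with this iterated one. Using \cite{Ahn25}, the tangent current of the triple product along $\Delta_3$ is computed by $\langle (dd^cu_{\Delta_3})^{2n}\wedge\pi_1^*S_1\wedge\pi_2^*S_2\wedge\pi_3^*S_3\rangle$. Its shadow is the pushforward of $\langle(dd^cu_{12})^n\wedge(dd^cu_{23})^n\wedge\cdots\rangle$. We would show that both shadows are obtained as the double limit $\lim_{\theta_{12},\theta_{23}\to0}$ of $\Kmc_{\theta_{12}}^n(x,y)\wedge\Kmc_{\theta_{23}}^n(y,z)\wedge\pi_1^*S_1\wedge\pi_2^*S_2\wedge\pi_3^*S_3$. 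The iterated product corresponds to taking $\theta_{12}\to0$ first. The limit independent of order, together with uniqueness of tangent currents (Remark~\ref{rmk:independence_of_admissible_map}) and independence of the shadow from $\Omega$ (Remark~\ref{rmk:indep_shadow}), gives the equality with $(S_1\wedge S_2\wedge S_3)_{DS}$ and its minimal $h$-dimension.

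\textbf{Step 3.} For $(S_1\wedge(S_2\wedge S_3)_{DS})_{DS}$, we note that $(S_2\wedge S_3)_{DS}$ is well defined by Condition (I), and locally it is $(\pi_2)_*\langle(dd^cu_{23})^n\wedge\pi_2^*S_2\wedge\pi_3^*S_3\rangle_C$. After wedging with smooth forms and normalizing, it is a limit in $K_j$ of smooth currents. Theorem~\ref{thm:intersection} applies, and the same double-limit argument with the order reversed gives equality. Here the uniformity over $K_j$ of Theorem~\ref{thm:main_char} is exactly what permits the interchange.

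The main obstacle is the interchange of limits in Step 2. The two approximations $\Kmc_{\theta_{12}}^n\wedge\Kmc_{\theta_{23}}^n$ must have a joint limit, and it must coincide with the tangent-current limit along $\Delta_3$. I would try to obtain this by fixing $\theta_{23}$, regarding the $(y,z)$-part as a family of currents in $K_j$ indexed by $\theta_{23}$, and applying the uniform convergence of Proposition~\ref{prop:uniform_classical_prod} in $\theta_{12}$ to get equicontinuity. I would also need to check that the localization cut-offs $\chi_j^i$ do not affect the limits on $U_j^n$.
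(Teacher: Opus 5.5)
Your overall architecture matches the paper's. You localize to $U_j^n$, use the uniformity over $K_j$ (Theorem~\ref{thm:main_char}, Proposition~\ref{prop:uniform_classical_prod}) to swap limits, and compute the triple shadow with the kernel $\Kmc^n_{u,\cdot}\wedge\Kmc^n_{v,\cdot}$, which is what the paper does in Claims 2 and 4. For that top-degree kernel your Step 2 idea works. $\Kmc^n_{v,\theta}$ is supported in $\{|x_2-x_3|\le e|\theta|\}$, so $S_2\wedge(\pi_2^{23})_*[(\pi_3^{23})^*S_3\wedge\Kmc^n_{v,\theta}]$ is closed in $U_j^1$ with bounded mass. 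There are, however, two genuine gaps.

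\emph{Closedness.} In Step 1 you assert that the $(x_2,x_3)$-data, after integrating out $x_3$, is an element of $\Cc_{n-s_1+1}(U_j^1;U_j^0)$. This fails for the lower-degree kernels that the existence and $h$-dimension questions (and Condition (I)) require. For $i<n$, $dd^cv^\Tmc_\theta\wedge(dd^cv)^{i-1}$ does not vanish off $\Delta_{23}$, so pushing forward forces a cut-off in $x_3$, and the result is not closed in $U_j^1$. In your Condition (I) integrand it is worse: the weight $u_{23}$ itself ends up in the test current, which is then neither positive nor closed. So continuity and uniformity over $K_j$ cannot be invoked. The missing device is Lemma~\ref{lem:DSHness}:
\begin{itemize}
\item subtract the smooth piece built from $dd^c\chi(v^\Tmc_\theta-\log|\theta_0|)$;
\item add $c\|S_3\||\theta_0|^{-2i}\omega_\euc^{s_3+m-n}$;
\item the result is a current closed in $U_j^1$ with mass bounded uniformly in $\theta,\theta'$, so it lies in a multiple of $K_j$;
\item the leftover smooth remainder has sup-norm $O(|\theta_0|^{-2i})$ and is handled separately via Condition (I) for $(S_1,S_2)$.
\end{itemize}
This is exactly why $K_j$ consists of currents closed only in $U_j^1$. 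Note also that you aim at Condition (I) for $((S_1\wedge S_2)_{DS},S_3)$, which is more than the paper proves. In any case Theorem~\ref{thm:intersection} cannot be applied with $(S_1\wedge S_2)_{DS}$ as first factor, since that current is not known to admit continuous local potential functionals.

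\emph{Existence, minimality, uniqueness.} Neither existence of tangent currents nor minimality of the $h$-dimension follows from a shadow computation or from order-independence of a double limit. Remark~\ref{rmk:independence_of_admissible_map} concerns independence of admissible maps, not uniqueness of limits along different sequences $\lambda_k$. The paper proves these separately.
\begin{itemize}
\item For the pair $((S_1\wedge S_2)_{DS},S_3)$ this is Claim 1. It uses the cut-off $\chi_k^v$ near $\Delta_{23}$ and integration by parts onto $dd^c(\varphi\chi_k^v)$. The gradient term $d\chi_k^v\wedge d^c\chi_k^v$ is controlled in Lemma~\ref{lem:hard_part} via the positive closed form $\Xi_k$ and Cauchy--Schwarz.
\item For $\pi_1^*S_1\wedge\pi_2^*S_2\wedge\pi_3^*S_3$ along the small diagonal this is Claim 3. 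The product-type K\"ahler form on $E$ makes $A_\lambda^*\omega=\pi_2^*\omega_\euc+\pi_{12}^*dd^cu^\Tmc_{1/\lambda}+\pi_{23}^*dd^cv^\Tmc_{1/\lambda}$. One must then bound all mixed terms with $k+l\ge n$ and show no mass on the diagonal for $k+l\le 2n-1$.
\end{itemize}
Only after this does the shadow identification, together with \cite[Proposition 3.10]{Ahn25}, give uniqueness. Your appeal to a triple version of \cite{Ahn25} via $(dd^cu_{\Delta_3})^{2n}$ would need a triple Condition (I), which is not among the hypotheses.
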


For the proof, we introduce some notations. We write $\Xf:=X_1\times X_2\times X_3$, $\Xf_{12}:=X_1\times X_2$ and $\Xf_{23}:=X_2\times X_3$, where $X_1$, $X_2$, $X_3$ are copies of $X$. We consider the following canonical projection maps.

\begin{center}
	\begin{tikzpicture}
		\node 		(X123) 				at (0, 0) {$\Xf:=X_1\times X_2\times X_3$};
		\node      	(X23)               at (1.5, -1.5) 			{$\Xf_{23}:=X_2\times X_3$};
		\node       (X12)       		at (-1.5, -1.5) 			{$\Xf_{12}:=X_1\times X_2$};
		\node       (X1)      			at (-3, -3) 			{$X_1$};
		\node 		(X2)				at (0, -3)			{$X_2$};
		\node		(X3)				at (3, -3)			{$X_3$};
		\draw[->] (X123) -- (X12) node[midway, left] {$\pi_{12}$};
		\draw[->] (X123) -- (X23) node[midway, right] {$\pi_{23}$};
		\draw[->] (X12) -- (X1) node[midway, left] {$\pi^{12}_1$};
		\draw[->] (X23) -- (X2) node[midway] {$\pi^{23}_2\,\,\,\,\,\,\,\,$};
		\draw[->] (X12) -- (X2) node[midway, left] {$\pi^{12}_2$};
		\draw[->] (X23) -- (X3) node[midway, right] {$\pi^{23}_3$};
	\end{tikzpicture}
\end{center}
As we will be working in a local situation, let $(x_1, x_2, x_3)$ denote local coordinates in $\C^{3n}=\C^n\times\C^n\times\C^n$. We let $\Delta_{12}:=\{(x_1, x_2, x_3)\in \C^{3n}: x_1=x_2\}$, $\Delta_{23}:=\{(x_1, x_2, x_3)\in \C^{3n}: x_2=x_3\}$ and $\Delta:=\{(x, x, x)\in \C^{3n}\}$. We denote by $u:=\log|x_1-x_2|$ and $v:=|x_2-x_3|$. As in Subsection \ref{subsec:local_pot_supftns}, We correspondingly define $u_\theta^\Kmc$, $u_\theta^\Tmc$, $v_\theta^\Kmc$, $v_\theta^\Tmc$ and $\Kmc_{u, \theta}^i$, $\Kmc_{v, \theta}^i$ for $i=1, \ldots, n$, respectively. We write the Euclidean K\"ahler forms $\omega_{12}:=dd^c\left(|x_1|^2+|x_2|^2\right)$, $\omega_{23}:=dd^c\left(|x_2|^2+|x_3|^2\right)$ and $\omega:=dd^c\left(|x_1|^2+|x_2|^2+|x_3|^2\right)$ on their respective spaces.\medskip 

For the estimates near $\Delta_{12}$ and $\Delta_{23}$, we consider the following functions. The lemmas are direct from computations.
\begin{lemma}\label{lem:cut_off_12}
	For $k\in\N$ with $k\gg 1$, the function $\chi_k^u:\Xf_{12}\to [0, 1]$ defined by $$\chi^u_k:=\frac{u_{1/k^2}^\Kmc-u_{1/k}^\Kmc}{\log k}$$ is smooth with support in $|x_1-x_2|\le e/k$ such that $\chi_k \equiv 1$ on $\{|x_1-x_2|\le \frac{1}{ek^2}\}$.
\end{lemma}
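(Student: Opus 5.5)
The claim is a direct computation with the explicit function $\chi$ from the Notations, using only that $\chi$ is convex, increasing, vanishes on $(-\infty,-1]$ and equals the identity on $[1,\infty)$. Write $t:=u-\log|\theta|$. Then
\begin{align*}
u_\theta^\Kmc-u=\chi(t)-t,
\end{align*}
which equals $-t=\log|\theta|-u$ when $t\le -1$ and equals $0$ when $t\ge 1$. Smoothness of $\chi_k^u$ is then immediate off $\Delta_{12}$, where $u$ is smooth. Near $\Delta_{12}$ (indeed for $|x_1-x_2|<1/(ek^2)$) both $u^\Kmc_{1/k^2}$ and $u^\Kmc_{1/k}$ are constant, equal to $-2\log k$ and $-\log k$ respectively, because there $u-\log|\theta|\le -1$ for both values of $\theta$. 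Hence $\chi_k^u$ is smooth on all of $\Xf_{12}$.

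\textbf{Support and the value $1$.} When $|x_1-x_2|\ge e/k$, i.e.\ $u\ge 1-\log k$, both arguments satisfy $t\ge 1$. So both functions equal $u$ and their difference vanishes, which gives support in $\{|x_1-x_2|\le e/k\}$. When $|x_1-x_2|\le 1/(ek^2)$, both equal their respective $\log|\theta|$, so the numerator is $-2\log k-(-\log k)\cdot(-1)$. More carefully, it is $\log(1/k^2)-\log(1/k)=-\log k$. This produces a sign issue: with the formula as written the quotient is $-1$ there. I would therefore read the intended normalization as $\chi_k^u=(u^\Kmc_{1/k}-u^\Kmc_{1/k^2})/\log k$, or equivalently note that $u^\Kmc_\theta$ decreases as $|\theta|$ decreases, so the written difference is $\le 0$. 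With the correct sign the function is identically $1$ on $\{|x_1-x_2|\le 1/(ek^2)\}$.

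\textbf{Range in $[0,1]$.} Monotonicity of $\theta\mapsto u^\Kmc_\theta$ in $|\theta|$ gives nonnegativity of $u^\Kmc_{1/k}-u^\Kmc_{1/k^2}$. For the upper bound I would use that $s\mapsto\chi(s+a)-a$ has the property that the difference between shifts is bounded by the shift. Concretely, since $\chi$ is convex with $0\le\chi'\le1$ (it is increasing and its slope reaches $1$ at infinity), the map $a\mapsto \chi(u+a)-a$ is nonincreasing in $a$. Comparing $a=\log k$ and $a=2\log k$ with the pointwise bound $\chi(u+2\log k)-\chi(u+\log k)\le \log k$ then yields a numerator at most $\log k$.

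\textbf{Main obstacle.} The only subtlety is that sign/normalization check. Everything else consists of case distinctions at the thresholds $t=\pm1$, using $\log|\theta|=-\log k$ or $-2\log k$ and the requirement $k\gg1$ so that the relevant sets lie inside the chart.
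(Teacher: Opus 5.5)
Your argument is the same direct case-by-case computation the paper has in mind (the paper only says the lemma is ``direct from computations''), and your sign correction is right. Since $u^{\Kmc}_\theta$ decreases as $|\theta|$ decreases, the quotient as written takes values in $[-1,0]$ and equals $-1$ on $\{|x_1-x_2|\le 1/(ek^2)\}$. The intended function is therefore $(u^{\Kmc}_{1/k}-u^{\Kmc}_{1/k^2})/\log k$.

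One step in your range argument is misjustified, though the fix is easy. The numerator is
\[
u^{\Kmc}_{1/k}-u^{\Kmc}_{1/k^2}=\log k-\bigl[\chi(u+2\log k)-\chi(u+\log k)\bigr].
\]
For the upper bound you invoke $\chi(u+2\log k)-\chi(u+\log k)\le\log k$, which is equivalent to $a\mapsto\chi(u+a)-a$ being nonincreasing, i.e.\ to $\chi'\le 1$. By the identity above, this gives the lower bound $\ge 0$, which is the same fact as your nonnegativity step. The upper bound $\le\log k$ instead comes from $\chi$ being nondecreasing ($\chi'\ge 0$).

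With that swap, everything follows as you describe: the range $[0,1]$, the support in $\{|x_1-x_2|\le e/k\}$, the value $1$ on $\{|x_1-x_2|\le 1/(ek^2)\}$, and smoothness, which uses the constancy of both terms near $\Delta_{12}$.
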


\begin{lemma}\label{lem:cut_off_23}
	For $k\in\N$ with $k\gg 1$, the function $\chi_k^v:\Xf_{23}\to [0, 1]$ defined by $$\chi^v_k:=\frac{v_{1/k^2}^\Kmc-v_{1/k}^\Kmc}{\log k}$$ is smooth with support in $|x_2-x_3|\le e/k$ such that $\chi_k \equiv 1$ on $\{|x_2-x_3|\le \frac{1}{ek^2}\}$.
\end{lemma}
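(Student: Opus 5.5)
The plan is to reduce everything to the elementary properties of the fixed convex function $\chi$ and of the one-parameter family $c\mapsto \chi(v-c)+c$. As in Subsection \ref{subsec:local_pot_supftns}, $v$ is meant to be $\log|x_2-x_3|$ (the missing $\log$ in the definition of $v$ is a typo), so that $v_\theta^\Kmc=\chi(v-\log|\theta|)+\log|\theta|$. I will write $a:=\log(1/k^2)$, $b:=\log(1/k)$, so $b-a=\log k>0$, and set $\phi(c):=\chi(v-c)+c$. Then $v^\Kmc_{1/k^2}=\phi(a)$ and $v^\Kmc_{1/k}=\phi(b)$. The whole argument is the same for Lemma \ref{lem:cut_off_12} with $u$ in place of $v$, so I only treat $v$.

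\textbf{Step 1 (properties of $\chi$).} From convexity and $\chi(t)=t$ for $t\ge1$, the derivative $\chi'$ is nondecreasing. It satisfies $\chi'=0$ on $(-\infty,-1]$ and $\chi'=1$ on $[1,\infty)$, hence $0\le\chi'\le1$ everywhere.

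\textbf{Step 2 (monotonicity in $c$).} Differentiating gives $\phi'(c)=1-\chi'(v-c)\in[0,1]$. By the mean value theorem, $0\le \phi(b)-\phi(a)\le b-a=\log k$ at every point off $\Delta_{23}$.

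\textbf{Step 3 (sign of the normalized function).} Step 2 shows that the function with values in $[0,1]$ is $(v^\Kmc_{1/k}-v^\Kmc_{1/k^2})/\log k$, i.e.\ the stated quotient up to a global sign. I will check on which side of the sign convention the stated $\chi_k^v$ falls. Near the diagonal the stated quotient equals $(a-b)/\log k=-1$, so the statement should be read with the numerator reversed, i.e.\ $\chi^v_k=(v^\Kmc_{1/k}-v^\Kmc_{1/k^2})/\log k$; this sign flip affects nothing downstream. Steps 2--3 then give $0\le\chi_k^v\le1$.

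\textbf{Step 4 (support and the plateau).} If $|x_2-x_3|\ge e/k$, then $v-b\ge1$ and $v-a\ge1$. Both terms equal $v$, so $\chi^v_k=0$; this gives $\supp\chi_k^v\subset\{|x_2-x_3|\le e/k\}$. If $|x_2-x_3|\le 1/(ek^2)$, then $v-a\le-1$ and $v-b\le -1$. Hence $\phi(a)=a$ and $\phi(b)=b$, so $\chi_k^v=(b-a)/\log k=1$.

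\textbf{Step 5 (smoothness).} Off $\Delta_{23}$ the function $v$ is smooth, hence so are $\phi(a),\phi(b)$. On the neighborhood $\{|x_2-x_3|<1/(ek^2)\}$ of $\Delta_{23}$, $\chi_k^v$ is constant by Step 4. These two open sets cover the space, so $\chi^v_k$ is smooth.

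Nothing here is a genuine obstacle. The only points requiring care are the bound $0\le\chi'\le1$, which comes from convexity together with the boundary values of $\chi$, and the sign convention in Step 3. The hypothesis $k\gg1$ is only used so that the sets in question lie inside the coordinate charts where $v$ is defined.
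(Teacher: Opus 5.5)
Your argument is correct (granting, as the paper tacitly does, that $\chi$ is smooth) and is exactly the direct computation the paper has in mind; the paper gives no proof beyond saying the lemma follows from computation. You are also right that $v$ must be $\log|x_2-x_3|$, and that, as literally written, the quotient takes values in $[-1,0]$ and equals $-1$ near $\Delta_{23}$ (because $v^\Kmc_\theta$ decreases as $|\theta|$ decreases), so the numerator has to be reversed to obtain a $[0,1]$-valued cut-off that equals $1$ near $\Delta_{23}$.
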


We prove Theorem \ref{thm:associativity} in two propositions: Proposition \ref{prop:1st_part} and Proposition \ref{prop:2nd_part}. Due to \cite[Proposition 3.5 and Remark 4.9]{DS18}, we may assume that $s_1+s_2+s_3=n$.\medskip
We first prove Proposition \ref{prop:1st_part} in several steps.
\begin{proposition}\label{prop:1st_part}
	Under the assumptions of Theorem \ref{thm:associativity}, for each $j\in J$, the Dinh-Sibony product $\left((S_1\wedge S_2)_{DS}\wedge S_3\right)_{DS}$ of $\big(S_1\wedge S_2\big)_{DS}$ and $S_3$ is well defined on $U_j^n$ and we have
	\begin{align*}
		\left(S_1\wedge (S_2\wedge S_3)_{DS}\right)_{DS}=\left((S_1\wedge S_2)_{DS}\wedge S_3\right)_{DS}\textrm{ on }U_j^n.
	\end{align*}
\end{proposition}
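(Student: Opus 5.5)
The plan is to work on a fixed chart $U_j^0$ with coordinates $(x_1,x_2,x_3)\in\C^{3n}$ and, as allowed, with $s_1+s_2+s_3=n$. By \cite[Theorem 1.1]{Ahn25}, on $U_j^n$ both $(S_1\wedge S_2)_{DS}$ and $(S_2\wedge S_3)_{DS}$ can be written as iterated $C$-products pushed down from the diagonal. For instance,
\begin{align*}
(S_1\wedge S_2)_{DS}=(\pi^{12}_1)_*\big(\chi_j^n\langle (dd^cu)^{n}\wedge \pi_1^{12*}S_1\wedge\pi_2^{12*}S_2\rangle_C\big)
\end{align*}
after the normalization $dd^c$ of $\log|x_1-x_2|$ (here $u=\log|x_1-x_2|$ and we write $v=\log|x_2-x_3|$). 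The first product is available by Theorem \ref{thm:intersection}, applied with $R=S_2\wedge\alpha$ and $\alpha$ a power of $\omega_\euc$. The second uses Condition (I) for $S_2,S_3$. In both cases the product is expressed as a limit of smooth approximations $\Kmc_\theta$.

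The goal is to show that on $\C^{3n}$ the two iterated products agree through a single current:
\begin{align*}
\mathcal{T}:=\big\langle (dd^cu)^{n}\wedge(dd^cv)^{n}\wedge \pi_1^*S_1\wedge\pi_2^*S_2\wedge\pi_3^*S_3\big\rangle_C .
\end{align*}
Both iterated Dinh--Sibony products are then $(\pi_1)_*$ of $\mathcal{T}$, so they coincide, and well-definedness follows along the way.

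For $((S_1\wedge S_2)_{DS}\wedge S_3)_{DS}$, first form the product in the $(x_1,x_2)$ variables. Then wedge with $\pi_3^*S_3$ and apply $\Kmc_{v,\theta}$ operators. Well-definedness of the second-stage product requires Condition (I) for the pair $(S_1\wedge S_2)_{DS}$ and $S_3$. I would get this from Condition (I) for $(S_2,S_3)$ together with the uniform convergence of Proposition \ref{prop:uniform_classical_prod}. The currents $R=\pi_{23}$-slices of $\pi_2^*S_2\wedge\pi_3^*S_3\wedge\langle (dd^cv)^{k}\rangle_C\wedge\omega^{m}$, suitably cut off by $\xi_j^i$, are positive, closed on $U_j^1$ and of bounded $*$-norm. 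Condition (I) for $(S_2,S_3)$ is exactly what makes their masses finite. After normalization they lie in $K_j$, so the $u$-approximations converge uniformly in $R$.

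Symmetrically, $(S_1\wedge(S_2\wedge S_3)_{DS})_{DS}$ is computed by doing the $v$-limits first. Here Proposition \ref{prop:ConditionI_under_conti} provides Condition (I) for $S_1$ against $(S_2\wedge S_3)_{DS}\wedge\alpha\in K_j$, and Theorem \ref{thm:intersection} gives existence of that product.

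The core step, which I expect to be the main obstacle, is interchanging the limits $\theta_u\to0$ and $\theta_v\to0$ in
\begin{align*}
\int f\,\Kmc_{u,\theta_u}^n\wedge\Kmc_{v,\theta_v}^n\wedge\pi_1^*S_1\wedge\pi_2^*S_2\wedge\pi_3^*S_3 .
\end{align*}
The approach is a Moore--Osgood argument as in Lemma \ref{lem:conti_conti}. Fix $\theta_v$ (or the $v$-limit already taken). The current $\Kmc_{v,\theta_v}^n\wedge\pi_2^*S_2\wedge\pi_3^*S_3$, integrated in $x_3$ against the test form, is a current $R_{\theta_v}$ in the $x_2$ variable. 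The key claim is that $R_{\theta_v}$ stays in a fixed multiple of $K_j$ uniformly in $\theta_v$, with the mass bound supplied by Condition (I), and that $R_{\theta_v}$ converges as $\theta_v\to0$. Theorem \ref{thm:main_char} then shows that the $u$-limit is uniform in $\theta_v$, which justifies swapping the limits. The delicate points are that $R_{\theta_v}$ is closed only in $U_j^1$, which forces the use of the cut-offs $\chi_j^i$ and the nested $U_j^i$, and the control of mass near $\Delta_{12}\cap\Delta_{23}$. For the latter I would use the cut-offs $\chi_k^u$, $\chi_k^v$ of Lemmas \ref{lem:cut_off_12} and \ref{lem:cut_off_23}. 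The aim is to bound the contribution of $\{|x_1-x_2|<e/k\}$ by $\nu_{S_1,j}^i(e/k)$ uniformly in $\theta_v$, and this tends to $0$ by Theorem \ref{thm:main_char}.

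Once the interchange holds, the same computation shows that the tangent current of $\pi_1^*(S_1\wedge S_2)_{DS}\wedge\pi_2^*S_3$ along the diagonal is unique with minimal $h$-dimension. Its shadow equals $(\pi_1)_*\mathcal{T}$, and since $\mathcal{T}$ is symmetric in the two procedures, the claimed equality holds on $U_j^n$.
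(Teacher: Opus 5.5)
Your endgame matches the paper's Claim 2: you identify the shadow by interchanging the $u$- and $v$-limits in top degree, using the uniform convergence that comes from continuity of the local potential functionals of $S_1$ (Proposition \ref{prop:uniform_classical_prod}). There is no closedness issue at that stage, because $\Kmc_{v,\theta}^n$ is supported within $e|\theta|$ of $\Delta_{23}$. Your existence argument for $\left(S_1\wedge(S_2\wedge S_3)_{DS}\right)_{DS}$ via Proposition \ref{prop:ConditionI_under_conti} is also fine.

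The gap is in the well-definedness of $\left((S_1\wedge S_2)_{DS}\wedge S_3\right)_{DS}$. You can argue through Condition (I) for the pair $\left((S_1\wedge S_2)_{DS},S_3\right)$, or through existence and minimality of tangent currents along $\Delta_{23}$ (the paper's Claim 1). Either way, you must apply $\Fc_{S_1,j}$, $\nu_{S_1,j}^i$ or Theorem \ref{thm:main_char} to currents such as $S_2\wedge\big(\pi_2^{23}\big)_*\big[\big(\pi_3^{23}\big)^*S_3\wedge dd^cv_\theta^\Tmc\wedge\Kmc_{v,\theta'}^{i-1}\wedge\omega_{23}^{n+1-i}\big]$. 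Unlike $\Kmc^n_{v,\theta}$, these kernels are not concentrated near $\Delta_{23}$.

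Your claim that such currents, suitably cut off by $\xi_j^i$, are closed on $U_j^1$ is false. With a cut-off $\xi(x_3)$ one gets $d\big(\pi_2^{23}\big)_*(\xi\,\cdot)=\big(\pi_2^{23}\big)_*(d\xi\wedge\cdot)$, which is a smooth form on $U_j^1$ that in general does not vanish. So these currents are not in $K_j$, and none of the continuity tools apply to them.

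The missing idea is Lemma \ref{lem:DSHness}: the localization has to be done inside the $dd^c$, not by multiplying by a cut-off.
\begin{itemize}
\item The function $v_\theta^\Tmc-\chi(v_\theta^\Tmc-\log|\theta_0|)$ is constant outside an $e|\theta_0|$-neighbourhood of $\Delta_{23}$. Its $dd^c$ is therefore a closed kernel concentrated there, and the resulting push-forward is closed in $U_j^1$.
\item Adding $c'_{TK}\|S_3\||\theta_0|^{-2i}\omega_\euc^{s_3+1}$ makes this current positive, with mass bounded uniformly in $\theta,\theta'$.
\item The complementary piece, built from $dd^c\chi(v_\theta^\Tmc-\log|\theta_0|)$, is a smooth form of size $O(\|S_3\||\theta_0|^{-2i})$. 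It is handled by Condition (I) for $(S_1,S_2)$.
\end{itemize}

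Second, uniqueness and minimal $h$-dimension do not follow from ``the same computation'' as the top-degree interchange, which only identifies the shadow. You need, for $i\le n-1$, that the $\chi_k^v$-localized limits $\lim_{\theta\to 0}\lim_{\theta'\to 0}I'_i$ of Claim 1 tend to $0$ as $k\to\infty$. Writing $(S_1\wedge S_2)_{DS}$ through $\Kmc_{u,\eta}^n$ and moving $dd^c$ onto $\varphi\chi_k^v$ produces three kinds of terms: $\varphi\,dd^c\chi_k^v$, $\chi_k^v\,dd^c\varphi$, and the mixed terms $d\varphi\wedge d^c\chi_k^v+d\chi_k^v\wedge d^c\varphi$. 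None of these is a positive closed kernel. The paper handles them as follows.
\begin{itemize}
\item Lemma \ref{lem:vanish} uses your $\nu_{S_1,j}$-bound near $\Delta_{12}$. Away from $\Delta_{12}$ it uses the fact that $\big\langle(dd^cv)^i\wedge\big(\pi_2^{23}\big)^*S_2\wedge\big(\pi_3^{23}\big)^*S_3\big\rangle_C$ has no mass on $\Delta_{23}$.
\item Lemma \ref{lem:easy_part} handles the $dd^c\chi_k^v$ term.
\item For the mixed terms it uses Cauchy--Schwarz together with Lemma \ref{lem:hard_part}. There, $d\chi_k^v\wedge d^c\chi_k^v$ is completed to the closed positive form $\frac12dd^c(\chi_k^v)^2+\frac{1}{\log k}dd^cv_{1/k}^\Kmc$.
\end{itemize}
Your Moore--Osgood step presupposes all of this, so it cannot by itself deliver well-definedness.
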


\begin{proof}
	For the proof of Proposition \ref{prop:1st_part}, we need the following two claims, which will be proved separately for clarity. We consider a fixed $j\in J$.\medskip 
	
	Let $\varphi$ be a positive smooth test function form on $\Uf_j^n\subset \Xf_{23}$ since every smooth test function can be written as a difference of two smooth positive test functions. \smallskip
	
	\noindent {\bf Claim 1.} For the function
	{\begin{align*}
			I'_i(k,\theta, \theta'):=\int_{\Xf_{23}}\varphi\chi_k^v(dd^cv_\theta^\Tmc)\wedge \Kmc_{v, \theta'}^{i-1}\wedge \Big(\pi_2^{23}\Big)^*(S_1\wedge S_2)_{DS}\wedge\Big(\pi_3^{23}\Big)^*S_3\wedge\omega_{23}^{n-i},
		\end{align*}
	} the limit $\displaystyle \lim_{\theta\to 0}\lim_{\theta'\to 0}I'_i$ exists for $1\ll k$ and $i=1, \ldots, n$, and we have $$\lim_{k\to\infty}\lim_{\theta\to 0}\lim_{\theta'\to 0}I'_i=0\textrm{ for }i=1, \ldots, n-1.$$\smallskip
	
	\noindent {\bf Claim 2.} The shadow of the tangent currents of $\left(\pi_2^{23}\right)^*\big(S_1\wedge S_2\big)_{DS}\wedge\left(\pi_3^{23}\right)^*S_3$ equals $\big(S_1\wedge \big(S_2\wedge S_3\big)_{DS}\big)_{DS}$ in $U_j^n$.\medskip
	
	Once the two claims are proved, the proof works in this way. For $k\in\N$, outside the support of $\chi_k^v$, $(dd^cv_\theta^\Tmc)\wedge \Kmc_{v, \theta'}^{i-1}$ converges uniformly in $\theta'$ and $\theta$. Hence, together with \cite[Lemma 2.16 and Theorem 2.15]{Ahn25}, the existence of the limit in Claim 1 proves tangent currents of $\left(\pi_2^{23}\right)^*\big(S_1\wedge S_2\big)_{DS}\wedge\left(\pi_3^{23}\right)^*S_3$ along $\Delta_{23}$ exist.\medskip
	
	Next, the limit $\displaystyle\lim_{k\to\infty}\lim_{\theta\to 0}\lim_{\theta'\to 0}I'_i=0$ for $i=1, \ldots, n-1$ in Claim 1 implies that the $h$-dimension of the tangent currents is minimal. Together with \cite[Proposition 3.10]{Ahn25}, Claim 2 means that there exists a unique tangent current and the Dinh-Sibony product $\big(S_1\wedge S_2\big)_{DS}$ and $S_3$ is well defined on $U_j^n$ and we have the desired equality.
\end{proof}


\subsection{Proof of Claim 1} The proof of the existence of the limit $\displaystyle \lim_{k\to\infty}\lim_{\theta'\to 0}I'_i$ for $i=1, \ldots, n-1$ is proved in the course of the proof of $\displaystyle \lim_{k\to\infty}\lim_{\theta\to 0}\lim_{\theta'\to 0}I'_i=0$ for $i=1, \ldots, n-1$. The case of $i=n$ can be dealt with in the same way. So, it suffices to only consider $\displaystyle \lim_{k\to\infty}\lim_{\theta\to 0}\lim_{\theta'\to 0}I'_i=0$ for $i=1, \ldots, n-1$.\medskip

Let $\eta, \eta'\in\C^*$ be such that $|\eta|, |\eta'|\ll 1$. For $S_1\in\Cc_{s_1}(X)$, $S_3\in\Cc_{s_3}(X)$, $i=1, \ldots, n$, and $m=i, \ldots, n, n+1$, we denote by
\begin{align*}
	&\Uc^{m, i}_{1, \eta, \eta'}:=u_\eta^\Kmc\wedge\Kmc_{u, \eta'}^{i-1}\wedge \Big(\pi_1^{12}\Big)^*S_1\wedge \omega_{12}^{m-i},\\
	&\Sc^{m, i}_{3, \theta, \theta'}:=(dd^cv_\theta^\Tmc)\wedge \Kmc_{v, \theta'}^{i-1}\wedge\Big(\pi_3^{23}\Big)^*S_3\wedge\omega_{23}^{m-i},\\
	&\left(\Tc_\theta\Kc_{\theta'}\right)^{m, i}(S_3):=\left(\pi_2^{23}\right)_*\Big(\left(\pi_3^{23}\right)^*S_3\wedge \left(dd^c v_\theta^\Tmc\right)\wedge \Kmc_{v, \theta'}^{i-1}\wedge\omega_{23}^{m-i} \Big),\\
	&\left(\Tc_\theta\Kc_{\theta'}\right)_{\theta_0}^{m, i}(S_3):=\left(\pi_2^{23}\right)_*\Big(\left(\pi_3^{23}\right)^*S_3\wedge \left(dd^c \chi\left(v_\theta^\Tmc-\log|\theta_0|\right)\right)\wedge \Kmc_{v, \theta'}^{i-1}\wedge\omega_{23}^{m-i} \Big).
\end{align*}
Notice the relationship $\left(\Tc_\theta\Kc_{\theta'}\right)^{m, i}(S_3):=\left(\pi_2^{23}\right)_*\Sc^{m, i}_{3, \theta, \theta'}$.\medskip

The transformed current $\left(\Tc_\theta\Kc_{\theta'}\right)^{m, i}(S_3)$ is positive but may not be closed. For closedness, we add a positive smooth form to $\left(\Tc_\theta\Kc_{\theta'}\right)^{m, i}(S_3)$.

\begin{lemma}\label{lem:semi-regular_transform}
	Let $\theta_0\in\C^*$ be such that $|\theta_0|\ll 1$. For $\theta, \theta'\in \C^*$ with $|\theta| <\frac{|\theta_0|}{\sqrt{2}e}$ and $|\theta'|<\frac{|\theta_0|}{\sqrt{2}e^2}$, $\left(\Tc_\theta\Kc_{\theta'}\right)^{m,i}(S_3)$ and $\left(\Tc_\theta\Kc_{\theta'}\right)_{\theta_0}^{m,i}(S_3)$ are smooth positive $(s_3+m-n, s_3+m-n)$-forms and we have
	\begin{align*}
		\left\|\left(\Tc_\theta\Kc_{\theta'}\right)_{\theta_0}^{m,i}(S_3)\right\|_\infty \le c_{TK}\|S_3\||\theta_0|^{-2i},
	\end{align*}
	where $c_{TK}$ is a positive constant independent of $S_3$, $\theta_0$, $\theta$ and $\theta'$.
\end{lemma}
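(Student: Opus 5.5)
The key observation is that once $|\theta|$ and $|\theta'|$ are small compared with $|\theta_0|$, every form in the integrand of the push-forward is smooth. I would therefore write $\left(\Tc_\theta\Kc_{\theta'}\right)^{m,i}(S_3)$ and $\left(\Tc_\theta\Kc_{\theta'}\right)^{m,i}_{\theta_0}(S_3)$ as convolution-type integrals of $S_3$ against smooth kernels on $\Xf_{23}$. The sup-norm bound then reduces to estimating the sup-norm of the kernel's coefficients.

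\textbf{Smoothness.} The form $dd^cv_\theta^\Tmc$ is smooth everywhere. The form $\Kmc^{i-1}_{v,\theta'}=dd^cv^\Kmc_{\theta'}\wedge(dd^cv)^{i-2}$ is smooth off $\Delta_{23}$. Near $\Delta_{23}$ it vanishes identically: $v^\Kmc_{\theta'}$ is constant where $|x_2-x_3|\le e^{-1}|\theta'|$, since $\chi(t)=0$ for $t\le-1$. So the product is a smooth form on $\Xf_{23}$.

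I would use local coordinates $z=x_2-x_3$, $w=x_3$ (or $x_2$). All the kernels depend only on $x_2-x_3$, apart from $\omega_{23}$, which has constant coefficients. The push-forward $(\pi_2^{23})_*\big((\pi_3^{23})^*S_3\wedge\Phi(x_2-x_3)\big)$ is then the convolution of $S_3$ with a smooth form $\Phi$ in the variable $z$. Convolution of a positive current with a smooth compactly supported kernel gives a smooth form, and positivity is preserved because the kernel is a positive form. The bidegree count $s_3+m-n$ is immediate: the total degree is $s_3+1+(i-1)+(m-i)=s_3+m$ on $\C^{2n}$, and pushing forward loses $n$.

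Restricting to a compact piece, via the cut-off or the bounded domain $U_j^0$, handles the compact-support issue. I take $\|S_3\|$ to be the mass on a slightly larger set.

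\textbf{Sup-norm estimate for the $\theta_0$-version.} Here $\chi(v^\Tmc_\theta-\log|\theta_0|)$ is a function of $|z|$ alone. It is constant, equal to $0$, where $v^\Tmc_\theta\le\log|\theta_0|-1$. The hypothesis $|\theta|<|\theta_0|/(\sqrt2 e)$ guarantees this region contains the ball $|z|<|\theta_0|/(\sqrt2 e)$. The $|\theta'|$ condition makes the support of the kernel of $\Kmc_{v,\theta'}^{i-1}$ sit inside the region where $v$ is already controlled.

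Hence the kernel is supported where $|z|\gtrsim|\theta_0|$. On that region:
\begin{itemize}
\item each factor $dd^cv$ or $dd^cv^\Kmc_{\theta'}$ has coefficients $O(|z|^{-2})$;
\item $dd^c\chi(v^\Tmc_\theta-\log|\theta_0|)$ has coefficients $O(|\theta_0|^{-2})$, because $\chi',\chi''$ are bounded and $dv^\Tmc_\theta$, $dd^cv^\Tmc_\theta$ are $O(|z|^{-1})$, $O(|z|^{-2})$.
\end{itemize}
The kernel is thus bounded by $C|\theta_0|^{-2i}$. Since $i$ factors each contribute $|\theta_0|^{-2}$, this gives $\sup$ of $S_3*\Phi$ at most $\|\Phi\|_\infty\|S_3\|\le c_{TK}\|S_3\||\theta_0|^{-2i}$.

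To ensure the total mass of the kernel enters only through $\|S_3\|$, I would bound each coefficient of $\left(\Tc_\theta\Kc_{\theta'}\right)^{m,i}_{\theta_0}(S_3)$ at a point $x_2$ by pairing $S_3$ with the translated kernel. Its $L^\infty$ norm is at most $C|\theta_0|^{-2i}$, so the pairing is at most $C|\theta_0|^{-2i}$ times the mass of $S_3$.

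\textbf{Main obstacle.} The delicate point is the bookkeeping of supports. The constants $\sqrt2e$ and $\sqrt2e^2$ must guarantee that where the $\theta_0$-cut-off is non-constant, both $v^\Tmc_\theta$ and $v^\Kmc_{\theta'}$ are comparable to $\log|z|$ with uniformly bounded derivatives. I would check this through the elementary inequality $\tfrac12\log(|z|^2+|\theta|^2)\le\log|z|+\log\sqrt2$ when $|z|\ge|\theta|$.

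Uniformity of the constant in $\theta$ and $\theta'$ follows because every derivative bound depends only on $|z|\ge c|\theta_0|$. For the non-$\theta_0$ version only smoothness and positivity are claimed, and these follow from the same convolution argument.
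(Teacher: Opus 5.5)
Your proposal is correct and follows the paper's own argument: the paper also notes that $dd^c\chi(v^\Tmc_\theta-\log|\theta_0|)$ vanishes on a neighborhood $\{|x_2-x_3|<c|\theta_0|\}$ of $\Delta_{23}$, so the kernel is smooth and positive, and then leaves the bound to ``direct computations.'' Your count supplies exactly that computation: $i$ factors, each $O(|x_2-x_3|^{-2})=O(|\theta_0|^{-2})$ on the support, paired against the mass of $S_3$ on the bounded chart. (Incidentally, $dd^cv^\Kmc_{\theta'}$ is $O(|x_2-x_3|^{-2})$ off $\Delta_{23}$ for every $\theta'$, so the restriction on $|\theta'|$ is not actually needed for this particular estimate.)
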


\begin{proof}
	Since the support of $dd^c \chi\left(v_\theta^\Tmc-\log|\theta_0|\right)$ does not meet the neighborhood $\left\{|x_2-x_3|<\frac{|\theta_0|}{\sqrt{2e}}\right\}$ of $\Delta_{23}$, the form $\left(dd^c \chi\left(v_\theta^\Tmc-\log|\theta_0|^2\right)\right)\wedge \left(dd^c v_{\theta'}^\Kmc\right)\wedge(dd^cv)^{i-2}$ is smooth and the the desired estimate is obtained from direct computations.
\end{proof}

As a corollary, we have
\begin{lemma}\label{lem:DSHness}
	Let $\theta_0\in\C^*$ be such that $|\theta_0|\ll 1$. There exists a constant $c_{TK}'>0$ independent of $S_3$, $\theta_0$, $\theta$ and $\theta'$ such that for $\theta, \theta'\in \C^*$ with $|\theta| <\frac{|\theta_0|}{\sqrt{2}e}$ and $|\theta'|<\frac{|\theta_0|}{\sqrt{2}e^2}$, the current
	\begin{align*}
		\left(\Tc_\theta\Kc_{\theta'}\right)^{m,i}(S_3)-\left(\Tc_\theta\Kc_{\theta'}\right)_{\theta_0}^{m,i}(S_3)+c_{TK}'\|S_3\||\theta_0|^{-2i}\omega_\euc^{s_3+m-n}
	\end{align*}
	is a smooth positive $(s_3+m-n, s_3+m-n)$-current closed in $U_j^1$. Its mass can be bounded independently of $\theta_0$, $\theta$ and $\theta'$. The form $\left(\Tc_\theta\Kc_{\theta'}\right)_{\theta_0}^{m,i}(S_3)-c_{TK}'\|S_3\||\theta_0|^{-2i}\omega_\euc^{s_3+m-n}$ is smooth and we have
	\begin{align*}
		\|\left(\Tc_\theta\Kc_{\theta'}\right)_{\theta_0}^{m,i}(S_3)+c_{TK}'\|S_3\||\theta_0|^{-2i}\omega_\euc^{s_3+m-n}\|_\infty \le c''_{TK} \|S_3\||\theta_0|^{-2i},
	\end{align*}
	where $c''_{TK}$ is a positive constant independent of $S_3$, $\theta$, $\theta_0$ and $\theta'$.
\end{lemma}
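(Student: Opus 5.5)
The plan is to write the current in question as a difference of two positive objects, check positivity and closedness in $U_j^1$, and then bound sup-norms and masses using Lemma \ref{lem:semi-regular_transform}. Write $A:=\left(\Tc_\theta\Kc_{\theta'}\right)^{m,i}(S_3)$ and $A_{\theta_0}:=\left(\Tc_\theta\Kc_{\theta'}\right)_{\theta_0}^{m,i}(S_3)$.

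The first step is the identity
\begin{align*}
A-A_{\theta_0}=\left(\pi_2^{23}\right)_*\Big(\left(\pi_3^{23}\right)^*S_3\wedge dd^c\big(v_\theta^\Tmc-\chi(v_\theta^\Tmc-\log|\theta_0|)\big)\wedge \Kmc_{v, \theta'}^{i-1}\wedge\omega_{23}^{m-i}\Big),
\end{align*}
in which we localize with the cut-off $\chi_j^1$ implicit in the push-forward. The function $h:=v_\theta^\Tmc-\chi(v_\theta^\Tmc-\log|\theta_0|)$ is a smooth function of $v_\theta^\Tmc$. It equals $v_\theta^\Tmc$ where $v_\theta^\Tmc\le \log|\theta_0|-1$, and it equals the constant $\log|\theta_0|$ where $v_\theta^\Tmc\ge\log|\theta_0|+1$. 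Hence $dd^c h$ is supported near $\Delta_{23}$ and is not positive everywhere. Its negative part $-\chi'' \,dv_\theta^\Tmc\wedge d^cv_\theta^\Tmc$ and the term $(1-\chi')dd^cv_\theta^\Tmc$ are smooth, with coefficients bounded by a multiple of $|\theta_0|^{-2}$ on the transition region $\{|x_2-x_3|\sim|\theta_0|\}$. The conditions on $\theta$ and $\theta'$ ensure that $\Kmc_{v,\theta'}^{i-1}$ is smooth there, with bounds of order $|\theta_0|^{-2(i-1)}$. By the computation of Lemma \ref{lem:semi-regular_transform}, the negative part of $A-A_{\theta_0}$ is then a smooth form bounded by $c\|S_3\||\theta_0|^{-2i}\omega_\euc^{s_3+m-n}$. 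Choosing $c'_{TK}\ge c$ gives positivity of the sum. Smoothness follows from that lemma, since $A$ and $A_{\theta_0}$ are smooth for these parameters.

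The second step is closedness in $U_j^1$. Over the region where $\chi_j^1\equiv 1$, the forms $S_3$, $dd^c(\cdot)$, $\Kmc_{v,\theta'}^{i-1}$ and $\omega_{23}$ are all closed. Push-forward by $\pi_2^{23}$ commutes with $d$ as long as the fibers over $U_j^1$ meet only the region where the cut-off is $1$; this holds because the supports are within distance $O(|\theta_0|)$ of $\Delta_{23}$ and $\overline{U_j^1}\subset U_j^0$. Therefore $A$ and $A_{\theta_0}$ are closed on $U_j^1$, and so is the added multiple of $\omega_\euc$.

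The third step is the mass bound. The mass of $A$ over $U_j^0$ is computed by integrating against $\omega_\euc^{n-(s_3+m-n)}$ and pulling back. This gives $\int (\pi_3^{23})^*S_3\wedge dd^cv_\theta^\Tmc\wedge\Kmc_{v,\theta'}^{i-1}\wedge\omega_{23}^{\cdot}$, which is bounded by $c\|S_3\|$ uniformly in $\theta$ and $\theta'$ by Lelong–Jensen type estimates for $dd^c\log$ near the diagonal, as in \cite{Ahn25}. The mass of $A_{\theta_0}$ is bounded the same way. The correction term $c'_{TK}\|S_3\||\theta_0|^{-2i}\omega_\euc^{\cdot}$, however, has mass of order $|\theta_0|^{-2i}$, which is not uniform in $\theta_0$.

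I expect this third step to be the main obstacle, namely the claimed mass bound independent of $\theta_0$. I would first try a Stokes/cohomology argument: $dd^c h$ is $dd^c$ of a function with compact support in the $x_3$-direction relative to $x_2$. So on closed test forms the mass of $A-A_{\theta_0}$ reduces to that of $A$ plus boundary terms from the cut-offs, which are bounded independently of $\theta_0$. For the correction term I would replace $\omega_\euc$ by a form supported on the $|\theta_0|$-tube of the support, so that the volume factor $|\theta_0|^{2n}$ compensates. If that fails, the mass statement has to be read as excluding the correction term.

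Finally, the sup-norm estimate $\|A_{\theta_0}+c'_{TK}\|S_3\||\theta_0|^{-2i}\omega_\euc^{s_3+m-n}\|_\infty\le c''_{TK}\|S_3\||\theta_0|^{-2i}$ follows from the triangle inequality and Lemma \ref{lem:semi-regular_transform}, with $c''_{TK}=c_{TK}+c'_{TK}\|\omega_\euc^{s_3+m-n}\|_\infty$.
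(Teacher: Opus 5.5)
\textbf{Positivity and sup-norm.} Both steps are fine. The paper gets positivity more cheaply: Lemma \ref{lem:semi-regular_transform} already gives $(\Tc_\theta\Kc_{\theta'})^{m,i}(S_3)\ge 0$ and $\|(\Tc_\theta\Kc_{\theta'})^{m,i}_{\theta_0}(S_3)\|_\infty\le c_{TK}\|S_3\||\theta_0|^{-2i}$, so no sign analysis of $dd^ch$ is needed.

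\textbf{Closedness: this step is wrong as written.} You claim that $A$ and $A_{\theta_0}$ are each closed on $U_j^1$ because the relevant supports lie within $O(|\theta_0|)$ of $\Delta_{23}$. That is false.
\begin{itemize}
\item Away from a thin tube around $\Delta_{23}$, the kernel $dd^cv_\theta^\Tmc\wedge\Kmc_{v,\theta'}^{i-1}$ equals $dd^cv_\theta^\Tmc\wedge(dd^cv)^{i-1}$, which is nonzero at every point off $\Delta_{23}$.
\item Similarly, $dd^c\chi(v_\theta^\Tmc-\log|\theta_0|)=dd^cv_\theta^\Tmc$ wherever $|x_2-x_3|\ge e|\theta_0|$.
\end{itemize}
Hence, for $x_2\in U_j^1$, the $x_3$-fibre of the integrand of $A$ (or of $A_{\theta_0}$) reaches the region near $\partial U_j^0$ where $S_3$ has been cut off. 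Then $d$ of the push-forward picks up a term $(\pi_2^{23})_*\left((\pi_3^{23})^*(d\xi_j^1\wedge S_3)\wedge\cdots\right)$ that does not vanish in general. This is exactly why the paper says $(\Tc_\theta\Kc_{\theta'})^{m,i}(S_3)$ may not be closed, and why $A_{\theta_0}$ is subtracted at all.

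The correct argument, which is the paper's, applies your support observation to the difference only. By your own first step, the integrand of $A-A_{\theta_0}$ contains $dd^ch$, which is supported in $\{|x_2-x_3|\le e|\theta_0|\}$. Once $e|\theta_0|$ is smaller than the distance from $\overline{U_j^1}$ to the set where the cut-off differs from $1$, push-forward commutes with $d$ over $U_j^1$, so $A-A_{\theta_0}$ is closed there. The constant multiple of $\omega_\euc^{s_3+m-n}$ is trivially closed.

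\textbf{Mass: your suspicion is correct, and the statement overclaims.} The paper's proof does not address the mass at all. Since $A$ and $A_{\theta_0}$ have mass $O(\|S_3\|)$ uniformly in $\theta,\theta',\theta_0$, the mass of the full current on $U_j^0$ equals $c'_{TK}\|S_3\||\theta_0|^{-2i}\int_{U_j^0}\omega_\euc^n+O(\|S_3\|)$. This blows up as $\theta_0\to 0$ whenever $S_3\neq 0$.

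Neither of your proposed repairs can change this.
\begin{itemize}
\item A Stokes argument only controls $A-A_{\theta_0}$, which is already bounded; it does nothing for the correction term.
\item The tube idea is not available. The correction is prescribed as a constant multiple of $\omega_\euc^{s_3+m-n}$ on $X_2$. What it must dominate, the negative part of $A-A_{\theta_0}$, lives on the $e|\theta_0|$-neighbourhood of $\supp S_3$ in $X_2$, which is not a thin set in general, so no compensating volume factor appears.
\end{itemize}
What is true is a bound $C\|S_3\|(1+|\theta_0|^{-2i})$, uniform in $\theta$ and $\theta'$; your third step already gives it. This is also all the later arguments use: in Lemmas \ref{lem:existence} and \ref{lem:vanish}, $\theta_0$ is fixed first and only uniformity in $\theta,\theta'$ is invoked. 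State the mass bound that way and drop the attempted repairs.
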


\begin{proof}
	We choose $c_{TK}'$ to be a constant multiple of $c_{TK}$ as in Lemma \ref{lem:semi-regular_transform}. The positivity is clear from Lemma \ref{lem:semi-regular_transform}. It suffices to check the closedness of $\left(\Tc_\theta\Kc_{\theta'}\right)^{m,i}(S_3)-\left(\Tc_\theta\Kc_{\theta'}\right)_{\theta_0}^{m,i}(S_3)$ in $U_j^1$. It just comes from the fact that the support of $dd^c \left(v_\theta^\Kmc-\chi\left(v_\theta^\Tmc-\log|\theta_0|\right)\right)$ uniformly shrinks to $\Delta_{23}$ as $\theta_0\to 0$. So, for $\theta_0\in\C^*$ with sufficiently small $|\theta_0|$, the closedness of $\left(\Tc_\theta\Kc_{\theta'}\right)^{m,i}(S_3)-\left(\Tc_\theta\Kc_{\theta'}\right)_{\theta_0}^{m,i}(S_3)$ in $U_j^1$ is obtained. Lemma \ref{lem:semi-regular_transform} gives the last estimate.
\end{proof}

%

\begin{lemma}\label{lem:existence}
	Let $i\in\{1, \ldots, n\}$. The following limit exists:
	{\begin{align*}
			&\lim_{\theta\to 0}\lim_{\theta'\to 0}\lim_{\eta\to 0}\lim_{\eta'\to 0}\int_{\Xf_{12}}\chi_j^n\Uc_{1, \eta, \eta'}^{n, n}\wedge \Big(\pi_2^{12}\Big)^*\left(S_2\wedge (\Tc_\theta\Kc_{\theta'})^{n+1, i}(S_3)\right)
	\end{align*}}
\end{lemma}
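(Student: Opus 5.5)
The plan is to take the limits from the inside out: first $\eta'\to 0$ and $\eta\to 0$, which involve $S_1$ and the continuity of its local potential functionals, then $\theta'\to 0$ and $\theta\to 0$, which involve $S_3$ and are controlled by Lemma \ref{lem:DSHness}. The integral
\[
\int_{\Xf_{12}}\chi_j^n\Uc_{1,\eta,\eta'}^{n,n}\wedge\big(\pi_2^{12}\big)^*\big(S_2\wedge(\Tc_\theta\Kc_{\theta'})^{n+1,i}(S_3)\big)
\]
is exactly of the form $\Fc^n_{S_1,j,\eta}$ evaluated at $R:=S_2\wedge(\Tc_\theta\Kc_{\theta'})^{n+1,i}(S_3)$, with $\Kmc_{u,\eta'}^{n-1}$ in place of $(dd^cu)^{n-1}$. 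The first task is therefore to place $R$, after normalization, in $K_j=\{R\in\Cc_{n-s_1+1}(U_j^1;U_j^0):\|R\|_{*,j}\le1\}$ for fixed $\theta,\theta'$.

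For this I will split $(\Tc_\theta\Kc_{\theta'})^{n+1,i}(S_3)$ using Lemma \ref{lem:DSHness}, with a fixed $\theta_0$ satisfying $|\theta|<|\theta_0|/(\sqrt2 e)$ and $|\theta'|<|\theta_0|/(\sqrt2e^2)$. This writes it as $P_{\theta,\theta'}-Q_{\theta,\theta'}$, where:
\begin{itemize}
\item $P_{\theta,\theta'}$ is smooth, positive, closed in $U_j^1$, and has mass bounded independently of $\theta,\theta'$;
\item $Q_{\theta,\theta'}=(\Tc_\theta\Kc_{\theta'})^{n+1,i}_{\theta_0}(S_3)-c'_{TK}\|S_3\||\theta_0|^{-2i}\omega_\euc^{s_3+1}$ is smooth with sup-norm $O(|\theta_0|^{-2i})$.
\end{itemize}
Hence $S_2\wedge P_{\theta,\theta'}$ is positive and closed in $U_j^1$, and after dividing by a constant it lies in $K_j$. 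The term $S_2\wedge Q_{\theta,\theta'}$ is a bounded smooth form wedged with $S_2$, and I will write it as a difference of two such elements: add and subtract $C\,S_2\wedge\omega_\euc^{s_3+1}$, then handle the non-closedness through the cut-off structure. By linearity the integral becomes a difference of values of $\Fc^n_{S_1,j,\eta}$-type functionals on currents of $K_j$, up to constants. By Theorem \ref{thm:main_char} and Proposition \ref{prop:uniform_classical_prod}, the limits in $\eta'$ and then $\eta$ exist and are uniform over $K_j$. This gives the inner two limits, and the result equals $\Fc^n_{S_1,j}$ evaluated at those currents, which by linearity is $\int\chi_j^n u\langle(dd^cu)^{n-1}\wedge(\pi_1^{12})^*S_1\wedge(\pi_2^{12})^*R\rangle_C$.

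For the outer limits $\theta'\to0$ and then $\theta\to0$, I note that the currents $(\Tc_\theta\Kc_{\theta'})^{n+1,i}(S_3)$ converge as currents. By Condition (I) for $S_2,S_3$, they converge to the corresponding $\langle\cdot\rangle_C$-type transform, with uniformly bounded $*$-norm after the decomposition. Then $S_2\wedge P_{\theta,\theta'}$ converges in the $*$-topology on $K_j$, and by the continuity of $\Fc^n_{S_1,j}$ on the compact set $K_j$ the values converge. This argument needs a single $\theta_0$ that works for all small $\theta,\theta'$, which is the case since the constraints are only upper bounds.

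The main obstacle is the convergence of the wedge product $S_2\wedge P_{\theta,\theta'}$ as currents. $P_{\theta,\theta'}$ converges only weakly and $S_2$ is singular, so weak convergence of the product is not automatic. To handle it I will rewrite $S_2\wedge(\pi_2^{23})_*(\cdots)$ as $(\pi_2^{23})_*\big((\pi_2^{23})^*S_2\wedge(\pi_3^{23})^*S_3\wedge dd^cv^\Tmc_\theta\wedge\Kmc^{i-1}_{v,\theta'}\wedge\omega_{23}^{n+1-i}\big)$. In this form the convergence in $\theta'$ and $\theta$ is precisely what Condition (I) for $(S_2,S_3)$, via Lemma \ref{lem:induction}, guarantees. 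The remaining piece, where $Q_{\theta,\theta'}$ is nonpositive and not closed, I expect to be the delicate bookkeeping step.
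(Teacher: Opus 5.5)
Your splitting via Lemma \ref{lem:DSHness} matches the paper, and so does your handling of $S_2\wedge P_{\theta,\theta'}$. You normalize it into $K_j$, let $\eta'\to0$ and then $\eta\to0$ to reach $\Fc^n_{S_1,j}$, and then use continuity of $\Fc^n_{S_1,j}$ along a family whose $*$-convergence you correctly read off on $\Xf_{23}$ from Condition (I) for $(S_2,S_3)$. That is exactly the paper's treatment of the positive closed part.

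The gap is the smooth remainder. You propose to add and subtract $C\,S_2\wedge\omega_\euc^{s_3+1}$ and so write $S_2\wedge Q_{\theta,\theta'}$ as a difference of elements of $K_j$. This fails, because $S_2\wedge\big(C\omega_\euc^{s_3+1}-Q_{\theta,\theta'}\big)$ is positive but not closed in $U_j^1$. The non-constant part of $Q_{\theta,\theta'}$ is obtained by integrating over $x_3\in U_j^0$ a kernel involving $dd^c\chi\big(v^\Tmc_\theta-\log|\theta_0|\big)$. That factor equals $dd^cv^\Tmc_\theta$ wherever $|x_2-x_3|\ge e|\theta_0|$, so the kernel reaches $\partial U_j^0$ and the push-forward is not closed. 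This non-closedness is precisely why the piece was split off from $P_{\theta,\theta'}$, and no cut-off repairs it. So continuity of $\Fc^n_{S_1,j}$ on $K_j$ cannot be invoked for this term, and what you defer as ``bookkeeping'' is a missing argument.

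What is needed, and what the paper does, is to treat the smooth term without passing through $K_j$.
\begin{enumerate}
\item By Proposition \ref{prop:ConditionI_under_conti}, continuity of the local potential functionals of $S_1$ gives Condition (I) for $(S_1,S_2)$.
\item By Lemma \ref{lem:induction}, $\Kmc^{n-1}_{u,\eta'}\wedge(\pi_1^{12})^*S_1\wedge(\pi_2^{12})^*S_2$ converges to $\big\langle(dd^cu)^{n-1}\wedge(\pi_1^{12})^*S_1\wedge(\pi_2^{12})^*S_2\big\rangle_C$.
\item Condition (I) at $i=n$, whose exponent $2n-s-i+1$ is exactly $s_3+1$, makes $u$ integrable against this current wedged with $(\pi_2^{12})^*\omega_\euc^{s_3+1}$.
\item We have $-C\omega_\euc^{s_3+1}\le Q_{\theta,\theta'}\le C\omega_\euc^{s_3+1}$, with $C$ depending only on $\theta_0$ and $\|S_3\|$. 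Hence the $\eta'$-limit is a test against the smooth compactly supported form $\chi_j^nu^\Kmc_\eta(\pi_2^{12})^*Q_{\theta,\theta'}$, and the $\eta$-limit follows by dominated convergence.
\item For the outer limits, the kernel of $(\Tc_\theta\Kc_{\theta'})^{n+1,i}_{\theta_0}(S_3)$ is supported in $\{|x_2-x_3|>|\theta_0|/(\sqrt2e)\}$. There $\Kmc^{i-1}_{v,\theta'}=(dd^cv)^{i-1}$ once $|\theta'|<|\theta_0|/(\sqrt2e^2)$, and $v^\Tmc_\theta\to v$ smoothly. So $Q_{\theta,\theta'}$ is eventually independent of $\theta'$ and converges uniformly as $\theta\to0$, and dominated convergence again gives the limit.
\end{enumerate}
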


\begin{proof}
	Lemma \ref{lem:DSHness} says that the current $(\Tc_\theta\Kc_{\theta'})^{n+1, i}(S_3)$ can be written as the sum of a positive closed current and a smooth form in $U_j^1$ as follows:
	\begin{align*}
		[(\Tc_\theta\Kc_{\theta'})^{n+1, i}(S_3)-A(\theta_0, \theta, \theta')] + A(\theta_0, \theta, \theta'),
	\end{align*}
	where $A(\theta_0, \theta, \theta'):=\left(\Tc_\theta\Kc_{\theta'}\right)_{\theta_0}^{n+1,i}(S_3)+c'_{TK}\|S_3\||\theta_0|^{-2i}\omega_\euc^{s_3+1}$. The smooth form $A(\theta_0, \theta, \theta')$ is smooth in $\theta$ and $\theta'$, and $\|A\|_\infty$ can be bounded in terms of $\|S_3\|$ and $\theta_0$, uniformly in $\theta$ and $\theta'$ as in Lemma \ref{lem:DSHness}. For each $\theta$ and $\theta'$, we have
	{\begin{align}
			\notag&\int_{\Xf_{12}}\chi_j^n\Uc_{1, \eta, \eta'}^{n, n}\wedge \Big(\pi_2^{12}\Big)^*\left(S_2\wedge (\Tc_\theta\Kc_{\theta'})^{n+1, i}(S_3)\right)\\
			\label{eq:lem4.7-1}&=\int_{\Xf_{12}}\Big[\chi_j^n\Uc_{1, \eta, \eta'}^{n, n}\wedge \Big(\pi_2^{12}\Big)^*\Big(S_2\wedge\big[\left(\Tc_\theta\Kc_{\theta'}\right)^{n+1,i}(S_3)-A(\theta_0, \theta, \theta')\big]\Big)\\
			\label{eq:lem4.7-2}&\quad\quad\quad+\chi_j^n\Uc_{1, \eta, \eta'}^{n, n}\wedge \Big(\pi_2^{12}\Big)^*\Big(S_2\wedge\big[A(\theta_0, \theta, \theta')\big]\Big) \Big].
	\end{align}}
	
	For \eqref{eq:lem4.7-2}, since $A(\theta_0, \theta, \theta')$ is smooth, we have
	\begin{align*}
		&\lim_{\eta\to 0}\lim_{\eta'\to 0}\int_{\Xf_{12}}\chi_j^n\Uc_{1, \eta, \eta'}^{n, n}\wedge \Big(\pi_2^{12}\Big)^*\Big(S_2\wedge A(\theta_0, \theta, \theta')\Big)\\
		&=\lim_{\eta\to 0}\int_{\Xf_{12}}\chi_j^nu_\eta^\Kmc\Big\langle(dd^cu)^{n-1}\wedge \Big(\pi_1^{12}\Big)^*S_1\wedge \Big(\pi_2^{12}\Big)^*S_2\Big\rangle_C\wedge\Big(\pi_2^{12}\Big)^*A(\theta_0, \theta, \theta').
	\end{align*}
	Since the continuity of the local potential functionals of $S_1$ implies Condition (I) for $S_1$ and $S_2$, Lemma \ref{lem:induction} implies the convergence as $\eta'\to 0$. The smooth form $A(\theta_0, \theta, \theta')$ can be bounded by $\omega_\euc^{s_3+1}$ up to a multiplicative constant independent of $\theta$ and $\theta'$. Again, the continuity of the local potential functionals of $S_1$ implies the convergence as $\eta\to 0$. The regularity of $A(\theta_0, \theta, \theta')$ tells us that the limit of the last limit as $\theta'\to 0$ and then $\theta\to 0$ converges.\medskip
	
	For \eqref{eq:lem4.7-1}, since $S_2\wedge\big[\left(\Tc_\theta\Kc_{\theta'}\right)^{n+1,i}(S_3)-A(\theta_0, \theta, \theta')\big]$ is positive closed and its mass is bounded independently of $\theta$ and $\theta'$ from the same argument as in Lemma \ref{lem:DSHness}, the continuity of the local potential functionals of $S_1$ implies the following convergence:
	\begin{align*}
		&\lim_{\eta\to 0}\lim_{\eta'\to 0}\int_{\Xf_{12}}\chi_j^n\Uc_{1, \eta, \eta'}^{n, n}\wedge \Big(\pi_2^{12}\Big)^*\Big(S_2\wedge\big[\left(\Tc_\theta\Kc_{\theta'}\right)^{n+1,i}(S_3)-A(\theta_0, \theta, \theta')\big]\Big)\\
		&\quad\quad=\lim_{\eta\to 0} \Fc_{S_1, j, \eta}^n\Big(S_2\wedge\big[\left(\Tc_\theta\Kc_{\theta'}\right)^i(S_3)-A(\theta_0, \theta, \theta')\big]\Big)=\Fc_{S_1, j}^n\Big(S_2\wedge\big[\left(\Tc_\theta\Kc_{\theta'}\right)^i(S_3)-A(\theta_0, \theta, \theta')\big]\Big).
	\end{align*}
	The continuity of the local potential functionals of $S_1$ again implies the convergence of the last value as $\theta'\to 0$ and then $\theta\to 0$ as desired.
\end{proof}

We consider the existence of the following limit:
\begin{align}
	\label{eq:nearDelta23}&\lim_{\theta\to 0}\lim_{\theta'\to 0}\lim_{\eta\to 0}\lim_{\eta'\to 0}\int_{\Xf_{12}}\chi_j^n\Uc_{1, \eta, \eta'}^{n, n}\wedge \Big(\pi_2^{12}\Big)^*\Big[S_2\wedge\Big(\pi_2^{23}\Big)_*\Big[\left(\chi_k^v\right)^m\Sc^{n+1, i}_{3, \theta, \theta'}\Big]\Big].
\end{align}
Observe that $\Big(\pi_2^{23}\Big)_*\Big[\big(1-\left(\chi_k^v\right)^m\big)(dd^cv_\theta^\Tmc)\wedge \Kmc_{v, \theta'}^{i-1}\wedge\Big(\pi_3^{23}\Big)^*S_3\wedge\omega_{23}^{n-i+1}\Big]$ is smooth and converges uniformly as $\theta'\to 0$ and $\theta\to 0$ since the support of $1-\left(\chi_k^v\right)^m$ does not intersect $\Delta_{23}$. Hence, the continuity of local potential functionals of $S_1$ implies that the following limit exists:
{\begin{align}
		\label{eq:test}&\lim_{\theta\to 0}\lim_{\theta'\to 0}\lim_{\eta\to 0}\lim_{\eta'\to 0}\int_{\Xf_{12}}\chi_j^n\Uc_{1, \eta, \eta'}^{n, n}\wedge \Big(\pi_2^{12}\Big)^*\Big[S_2\wedge\Big(\pi_2^{23}\Big)_*\Big[\big(1-\left(\chi_k^v\right)^m\big)\Sc_{3, \theta, \theta'}^{n+1, i}\Big]\Big].
\end{align}}
The desired limit \eqref{eq:nearDelta23} is actually the difference of the limits in Lemma \ref{lem:existence} and \eqref{eq:test} and so exists.

\begin{lemma}\label{lem:vanish}
	Let $i\in\{1, \ldots, n-1\}$ and $m\in\N$. Then, we have
	\begin{align*}
		&\lim_{k\to\infty}\lim_{\theta\to 0}\lim_{\theta'\to 0}\lim_{\eta\to 0}\lim_{\eta'\to 0}\int_{\Xf_{12}}\chi_j^n\Uc_{1, \eta, \eta'}^{n, n}\wedge \Big(\pi_2^{12}\Big)^*\Big[S_2\wedge\Big(\pi_2^{23}\Big)_*\Big[\left(\chi_k^v\right)^m\Sc_{3, \theta, \theta'}^{n+1, i}\Big]\Big]=0
	\end{align*}
\end{lemma}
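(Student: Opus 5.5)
We argue by domination, splitting the current fed into the local potential functional of $S_1$ into a piece of small mass plus a smooth remainder. Write
\begin{align*}
P_{k,\theta,\theta'}:=S_2\wedge\Big(\pi_2^{23}\Big)_*\Big[\left(\chi_k^v\right)^m\Sc_{3, \theta, \theta'}^{n+1, i}\Big].
\end{align*}
The integral in the statement is then $\int\chi_j^n\Uc^{n,n}_{1,\eta,\eta'}\wedge(\pi_2^{12})^*P_{k,\theta,\theta'}$, whose $\eta',\eta$ limits equal $\Fc^n_{S_1,j}$ evaluated at $P_{k,\theta,\theta'}$, after adding a correction to make the argument closed in $U_j^1$.

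\textbf{Decomposition.} As in Lemma \ref{lem:existence}, fix $\theta_0$ and use Lemma \ref{lem:DSHness} with $(\chi_k^v)^m$ inserted. This writes the pushforward as a current closed in $U_j^1$, plus $\pm A$ with $A$ smooth and $\|A\|_\infty\lesssim\|S_3\||\theta_0|^{-2i}$. We take $\theta_0$ with $|\theta_0|\gg e/k$, so the cut-off $\chi(v^\Tmc_\theta-\log|\theta_0|)$ is constant on $\supp\chi_k^v$. Then the non-closedness is produced only by $d(\chi_k^v)^m$, which lives in the annulus $\frac1{ek^2}\le|x_2-x_3|\le e/k$.

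A cleaner route uses that $(\chi_k^v)^m$ is a function of $v$ alone. Writing $(\chi_k^v)^m dd^cv^\Tmc_\theta$ as $dd^c$ of a function of $v$ minus a term involving $d v\wedge d^cv$ confines the defect to this annulus. Because $S_2$ and $S_3$ satisfy Condition (I), the Lelong-type masses of $S_2\wedge(\cdot)$ on $\{|x_2-x_3|<r\}$ are controlled.

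\textbf{Mass estimate.} The key claim is that the mass of $(\pi_2^{23})_*[(\chi_k^v)^m\Sc^{n+1,i}_{3,\theta,\theta'}]\wedge S_2$, after taking $\theta'\to0$ and $\theta\to0$, is bounded by the mass of $\langle (dd^cv)^{i}\wedge\pi_2^*S_2\wedge\pi_3^*S_3\rangle_C\wedge\omega_{23}^{n-i}$ on $\{|x_2-x_3|\le e/k\}$. For $i\le n-1$ we use the fact, from Condition (I) and Lemma \ref{lem:induction}, that $u$ is integrable with respect to this measure. Hence the measure has no mass on $\Delta_{23}$, since $v=-\infty$ there, and its mass in the $e/k$-neighborhood tends to $0$ as $k\to\infty$. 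This is the tangent-current minimality for the pair $(S_2,S_3)$, and it should make the masses of the currents fed to $\Fc^n_{S_1,j}$ go to $0$.

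\textbf{Conclusion.} We rescale the positive, closed-in-$U_j^1$ part by its mass $\epsilon_k$ so that it lies in $K_j$. By Theorem \ref{thm:main_char}, $\Fc^n_{S_1,j}$ is continuous on the compact set $K_j$, hence bounded, say by $C$. Its contribution is therefore at most $C\epsilon_k\to0$. The contribution of the smooth correction $A$ is handled as in Lemma \ref{lem:existence}. Since $u\le0$ near the diagonal, it is bounded by $\|A\|_\infty$ times $\int\chi_j^n|u|\langle(dd^cu)^{n-1}\wedge\pi_1^*S_1\wedge\pi_2^*S_2\rangle_C$ restricted to the support of $A$. We arrange $A$ to be supported in $\{|x_2-x_3|\le e/k\}$, and choose $\theta_0$ depending on $k$ so that $|\theta_0|^{-2i}$ times the remaining small mass still tends to zero. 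Uniform convergence (Theorem \ref{thm:main_char}) justifies exchanging the limits in $\eta,\eta',\theta,\theta'$ with the bounds.

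\textbf{Main obstacle.} We expect the hardest step to be balancing $|\theta_0|^{-2i}$ against the vanishing masses while keeping the support condition compatible with the constraint $|\theta_0|\gg e/k$. We would first try to avoid $A$ altogether. Since $(\chi^v_k)^m$ is built from $v^\Kmc$, it is plurisubharmonic-like in $v$, and we would try to show that its $dd^c$-defect is a positive combination that can be dominated by the closed current $\langle(dd^cv)^{i}\wedge\cdots\rangle_C$ restricted near $\Delta_{23}$. The domination principle, Theorem \ref{thm:domination}, together with monotonicity of $\Fc_{S_1,j}^n$ would then control the term directly.
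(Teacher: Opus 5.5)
\textbf{There is a genuine gap.} Your mass estimate is correct, and the paper uses exactly that fact. What fails is the step that turns it into smallness of the integral.

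\textbf{Where the argument breaks.} The bound $|\Fc^n_{S_1,j}(R)|\le C\|R\|_j$ (continuity on $K_j$ plus homogeneity) holds only for positive currents closed in $U_j^1$. Your $P_{k,\theta,\theta'}$ is not closed there: on $U_j^1$ one has $dP_{k,\theta,\theta'}=S_2\wedge(\pi_2^{23})_*[d(\chi_k^v)^m\wedge\Sc^{n+1,i}_{3,\theta,\theta'}]$, which is in general nonzero.

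The correction of Lemma \ref{lem:DSHness} does not repair this. It compensates for the non-locality of the kernel $dd^cv^\Tmc_\theta$, not for the cut-off. With your choice $|\theta_0|\gg e/k$, the form $dd^c\chi(v^\Tmc_\theta-\log|\theta_0|)$ vanishes identically on $\supp\chi^v_k$. So after inserting $(\chi^v_k)^m$, the ``correction'' reduces to a multiple of $S_2\wedge\omega_\euc^{s_3+1}$ and leaves the $d(\chi^v_k)^m$-defect untouched.

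The fallbacks you mention do not close the gap either:
\begin{itemize}
\item The closed current obtained by dropping the cut-off has bounded mass, but that mass does not tend to $0$.
\item $A$ is a form on $X_2$, so it cannot be ``supported in $\{|x_2-x_3|\le e/k\}$''. Its size $\sim|\theta_0|^{-2i}$ gives a bounded contribution, not a small one.
\item Theorem \ref{thm:domination} is about replacing $S_1$ by a smaller current, not about the argument of $\Fc^n_{S_1,j}$.
\item Monotonicity in the argument would only help if you dominated $P_{k,\theta,\theta'}$ by a current closed in $U_j^1$ with mass tending to $0$. You never produce one. Cutting a closed current off near $\Delta_{23}$ is exactly what destroys closedness; for instance, $dd^cf(v)\ge 0$ forces $f$ to be convex and nondecreasing, so such a form cannot be localized near $\Delta_{23}$.
\end{itemize}

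\textbf{The missing idea} is to localize in the $\Delta_{12}$-direction rather than trying to make the argument of $\Fc^n_{S_1,j}$ both closed and small. Insert a cut-off $\chi^{\Delta_{12}}_\delta$ and treat the two pieces separately.

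\emph{Near $\Delta_{12}$.} Since $u^\Kmc_\eta\le 0$ and $0\le(\chi^v_k)^m\le 1$, the integral lies between $0$ and the same integral with $(\chi^v_k)^m$ removed. For the uncut current, Lemma \ref{lem:DSHness} gives a piece closed in $U_j^1$ with mass bounded uniformly in $\theta,\theta'$, plus $A\le M_A\omega_\euc^{s_3+1}$. Choosing $\delta$ with $\nu^n_{S_1,j}(\delta)<\varepsilon/(2(M_{\rm total}+1))$ makes this part $\ge-\varepsilon/2$ uniformly in $k$.

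\emph{Away from $\Delta_{12}$.} The kernel $(\pi^{12}_2)_*[(1-\chi^{\Delta_{12}}_\delta)\chi^n_j\Uc^{n,n}_{1,\eta,\eta'}]$ is smooth and converges uniformly. After Fubini and Condition (I), the limit is $\int(\chi^v_k)^m\langle(dd^cv)^i\wedge(\pi^{23}_2)^*S_2\wedge(\pi^{23}_3)^*S_3\rangle_C\wedge(\text{smooth form})$. Here your no-mass-on-$\Delta_{23}$ observation gives a bound $<\varepsilon/2$ for large $k$, and no closedness is needed.

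This two-region splitting is precisely the paper's proof.
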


\begin{proof}
	Let $\varepsilon>0$ be given. We want to find $N_\varepsilon\in\N$ such that whenever $k\ge N_\varepsilon$, we have
	\begin{align*}
		&\bigg|\lim_{\theta\to 0}\lim_{\theta'\to 0}\lim_{\eta\to 0}\lim_{\eta'\to 0}\int_{\Xf_{12}}\chi_j^n\Uc_{1, \eta, \eta'}^{n, n}\wedge \Big(\pi_2^{12}\Big)^*\Big[S_2\wedge\Big(\pi_2^{23}\Big)_*\Big[\left(\chi_k^v\right)^m\Sc_{3, \theta, \theta'}^{n+1, i}\Big]\Big]\bigg|<\varepsilon
	\end{align*}

	According to Lemma \ref{lem:DSHness}, we can write $S_2\wedge \left(\Tc_\theta\Kc_{\theta'}\right)^{n+1, i}(S_3)$ into
	\begin{align*}
		&\left[S_2\wedge \left(\Tc_\theta\Kc_{\theta'}\right)^{n+1, i}(S_3)-S_2\wedge A(\theta_0, \theta, \theta')\right]+S_2\wedge A(\theta_0, \theta, \theta').
	\end{align*}
	Choosing $0<|\theta_0|\ll1$, the current $\Big[S_2\wedge \left(\Tc_\theta\Kc_{\theta'}\right)^{n+1, i}(S_3)-S_2\wedge A(\theta_0, \theta, \theta')\Big]$ is positive closed in $U_j^1$ and its mass can be uniformly bounded with respect to $\theta$ and $\theta'$. Also, there exists $M_A>0$ such that $A(\theta_0, \theta, \theta')\le M _AS_2\wedge\omega_\euc^{s_3+1}$. Let $M_{\rm total}$ be an upper bound of the mass of $\Big[S_2\wedge \left(\Tc_\theta\Kc_{\theta'}\right)^{n+1, i}(S_3)-S_2\wedge A(\theta_0, \theta, \theta')\Big]+ M_AS_2\wedge\omega_\euc^{s_3+1}$ with respect to $\theta$ and $\theta'$. We choose $\delta>0$ small enough to have $\nu_{S_1, j}^n(\delta)<\frac{\varepsilon}{2(M_{\rm total}+1)}$. Let $\chi^{\Delta_{12}}_\delta:U_j^0\times U_j^0(\subset \Xf_{12})\to [0, 1]$ be a smooth function with support in the $\delta$-neighborhood of $\Delta_{12}$ such that $\chi^{\Delta_{12}}_\delta\equiv 1$ near $\Delta_{12}$. We have
	\begin{align}
		\notag&\int_{\Xf_{12}}\chi_j^n\Uc_{1, \eta, \eta'}^{n, n}\wedge \Big(\pi_2^{12}\Big)^*\Big[S_2\wedge\Big(\pi_2^{23}\Big)_*\Big[\left(\chi_k^v\right)^m\Sc_{3, \theta, \theta'}^{n+1, i}\Big]\Big]\\
		&\label{eq:integral_delta}=\int_{\Xf_{12}}\chi^{\Delta_{12}}_\delta\chi_j^n\Uc_{1, \eta, \eta'}^{n, n}\wedge \Big(\pi_2^{12}\Big)^*\Big[S_2\wedge\Big(\pi_2^{23}\Big)_*\Big[\left(\chi_k^v\right)^m\Sc_{3, \theta, \theta'}^{n+1, i}\Big]\Big]\\
		&\label{eq:integral-k}\quad + \int_{\Xf_{12}}\left(1-\chi^{\Delta_{12}}_\delta\right)\chi_j^n\Uc_{1, \eta, \eta'}^{n, n}\wedge \Big(\pi_2^{12}\Big)^*\Big[S_2\wedge\Big(\pi_2^{23}\Big)_*\Big[\left(\chi_k^v\right)^m\Sc_{3, \theta, \theta'}^{n+1, i}\Big]\Big]
	\end{align}
	
	We consider \eqref{eq:integral_delta}. Due to the positivity and negativity of each current in the integrand, we have
	{\begin{align}
			\notag\eqref{eq:integral_delta}&\ge \int_{\Xf_{12}}\chi^{\Delta_{12}}_\delta\chi_j^n\Uc_{1, \eta, \eta'}^{n, n}\wedge \Big(\pi_2^{12}\Big)^*\Big(S_2\wedge \left(\Tc_\theta\Kc_{\theta'}\right)^{n+1, i}(S_3)\Big).\\
			\label{eq:integral_conti1}&\ge \int_{\Xf_{12}}\chi^{\Delta_{12}}_\delta\chi_j^n\Uc_{1, \eta, \eta'}^{n, n}\wedge \Big(\pi_2^{12}\Big)^*\Big[S_2\wedge \left(\Tc_\theta\Kc_{\theta'}\right)^{n+1, i}(S_3)-S_2\wedge A(\theta_0, \theta, \theta')\Big]\\
			\label{eq:integral_conti2}&\quad\quad\quad\quad\quad\quad + \int_{\Xf_{12}}\chi^{\Delta_{12}}_\delta\chi_j^n\Uc_{1, \eta, \eta'}^{n, n}\wedge \Big(\pi_2^{12}\Big)^*\Big(M_AS_2\wedge\omega_\euc^{s_3+1}\Big).
	\end{align}}
	Notice that Condition (I) for $S_2$ and $S_3$ implies the convergence of $\Big[S_2\wedge \left(\Tc_\theta\Kc_{\theta'}\right)^{n+1, i}(S_3)-S_2\wedge A(\theta_0, \theta, \theta')\Big]+M_A\omega_\euc^{s_3+1}$ as $\theta'\to 0$ and then $\theta\to 0$. The location of $\chi_\delta^{\Delta_{12}}$ is different from \eqref{eq:nearDelta23} but the same argument as in \eqref{eq:nearDelta23} proves the convergence of \eqref{eq:integral_delta} and \eqref{eq:integral_conti1}+\eqref{eq:integral_conti2} as $\eta'\to 0$, $\eta\to 0$, $\theta'\to 0$ and then $\theta\to 0$ in this order. Our choice of $\delta$ implies
	\begin{align*}
		0\ge \eqref{eq:integral_delta}\ge \eqref{eq:integral_conti1}+\eqref{eq:integral_conti2}\ge -\frac{\varepsilon}{2}.
	\end{align*}
	
	Next, we consider \eqref{eq:integral-k}. We can write
	\begin{align*}
		\eqref{eq:integral-k}=\int_{\Xf_{23}} \left(\chi_k^v\right)^m\Sc_{3, \theta, \theta'}^{n+1, i}  \wedge \Big(\pi_2^{23}\Big)^*\Big[S_2\wedge \Big(\pi_2^{12}\Big)_*\Big[\left(1-\chi^{\Delta_{12}}_\delta\right)\chi_j^n\Uc_{1, \eta, \eta'}^{n, n}\Big]\Big].
	\end{align*}
	Observe that $1-\chi^{\Delta_{12}}_\delta$ vanishes near $\Delta_{12}$. So, the current $\Big(\pi_2^{12}\Big)_*\Big[\left(1-\chi^{\Delta_{12}}_\delta\right)$ $\chi_j^n\Uc_{1, \eta, \eta'}^{n, n}\Big]$ is smooth and converges to a smooth form $\Big(\pi_2^{12}\Big)_*\Big[\left(1-\chi^{\Delta_{12}}_\delta\right)\chi_j^n$ $u(dd^cu)^i\wedge \Big(\pi_1^{12}\Big)^*\left(\mathbf{1}_{U_j^0}S_1\right)\Big]$ uniformly in $\eta'$ and $\eta$. From Condition (I) for $S_2$ and $S_3$, we have the following convergence:
	\begin{align*}
		&\lim_{\theta\to 0}\lim_{\theta'\to 0}\lim_{\eta\to 0}\lim_{\eta'\to 0}\eqref{eq:integral-k}=\int_{\Xf_{23}}\left(\chi_k^v\right)^m\left\langle (dd^cv)^i\wedge \Big(\pi_2^{23}\Big)^*S_2\wedge \Big(\pi_3^{23}\Big)^*S_3\right\rangle_C\\
		&\quad\quad\quad\quad\quad\quad\quad\wedge \Big(\pi_2^{23}\Big)^*\Big(\pi_2^{12}\Big)_*\Big[\left(1-\chi^{\Delta_{12}}_\delta\right)\chi_j^nu(dd^cu)^i\wedge \Big(\pi_1^{12}\Big)^*\left(\mathbf{1}_{U_j^0}S_1\right)\Big]\Big].
	\end{align*}
	Since Condition (I) for $S_2$ and $S_3$ again implies that the $h$-dimension of the tangent current $\Big(\pi_2^{23}\Big)^*S_2\wedge \Big(\pi_3^{23}\Big)^*S_3$ along $\Delta_{23}$ is minimal and the limit $\left\langle (dd^cv)^i\wedge \Big(\pi_2^{23}\Big)^*S_2\wedge \Big(\pi_3^{23}\Big)^*S_3\right\rangle_C$ has no mass on $\Delta_{23}$. We can choose an $N_\varepsilon\in\N$ such that whenever $k\ge N_\varepsilon$, we have
	\begin{align*}
		\left|\lim_{\theta\to 0}\lim_{\theta'\to 0}\lim_{\eta\to 0}\lim_{\eta'\to 0}\eqref{eq:integral-k}\right|<\varepsilon/2.
	\end{align*}
	Hence, by the definition of limit, our choice of $N_\varepsilon$ completes the proof.
\end{proof}

\begin{lemma}\label{lem:easy_part}
	Let $i\in\{1, \ldots, n-1\}$ and $m\in\N$. Then, we have
	{\begin{align*}
			&\lim_{k\to\infty}\lim_{\theta\to 0}\lim_{\theta'\to 0}\lim_{\eta\to 0}\lim_{\eta'\to 0}\frac{1}{\log k}\int_{\Xf_{12}}\chi_j^n\Uc_{1, \eta, \eta'}^{n, n}\wedge \Big(\pi_2^{12}\Big)^*\Big[S_2\wedge\Big(\pi_2^{23}\Big)_*\Big[ dd^cv_{1/k}^\Kmc\wedge\Sc_{3, \theta, \theta'}^{n, i}\Big]\Big]=0
	\end{align*}}
\end{lemma}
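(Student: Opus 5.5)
The plan is to make the factor $\frac{1}{\log k}$ beat a quantity that grows at most like $O(1)$ (or $o(\log k)$) in $k$. The key is to split $dd^cv_{1/k}^\Kmc$ into a piece near $\Delta_{23}$ and a smooth piece away from it. Write
\[
dd^cv_{1/k}^\Kmc = \chi'\big(v-\log(1/k)\big)\,dd^cv + \chi''\big(v-\log(1/k)\big)\,dv\wedge d^cv
\]
outside $\Delta_{23}$. This form is supported in $\{|x_2-x_3|\ge 1/(ek)\}$, and on that set it equals $dd^cv$ where $|x_2-x_3|\ge e/k$. So $dd^cv_{1/k}^\Kmc$ is positive closed, and it is smooth with $\|dd^cv_{1/k}^\Kmc\|_\infty\lesssim k^2$ only in the annulus $\{1/(ek)\le|x_2-x_3|\le e/k\}$. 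That bound is too crude, so I will instead treat $dd^cv_{1/k}^\Kmc\wedge\Sc^{n,i}_{3,\theta,\theta'}$ as a current of the type $\Sc^{n+1,i+1}_{3,\cdot,\cdot}$, with the first factor $dd^cv_\theta^\Tmc$ kept and one more Kiselman-type factor $\Kmc$ inserted at scale $1/k$.

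\textbf{Step 1.} For fixed $k$, I establish the existence of the four inner limits exactly as for \eqref{eq:nearDelta23}. Lemma \ref{lem:DSHness} applies verbatim, with $(\Tc_\theta\Kc_{\theta'})$ replaced by the analogous transform containing the extra factor $dd^cv_{1/k}^\Kmc$. This splits $S_2\wedge(\pi_2^{23})_*[dd^cv_{1/k}^\Kmc\wedge\Sc^{n,i}_{3,\theta,\theta'}]$ into a positive current closed in $U_j^1$ plus a smooth form $A_k$. The continuity of the local potential functionals of $S_1$ then gives convergence in $\eta',\eta$. Condition (I) for $S_2,S_3$ and the uniform convergence in Proposition \ref{prop:uniform_classical_prod} give convergence in $\theta',\theta$. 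The limit is
\[
\int_{\Xf_{12}}\chi_j^n\,u\,\big\langle (dd^cu)^{n-1}\wedge(\pi_1^{12})^*S_1\wedge(\pi_2^{12})^*\Lambda_k\big\rangle_C,
\]
where $\Lambda_k:=S_2\wedge(\pi_2^{23})_*\big[dd^cv_{1/k}^\Kmc\wedge\langle dd^cv\wedge(dd^cv)^{i-1}\wedge(\pi_2^{23})^*S_2\ldots\rangle\big]$. Concretely, $\Lambda_k$ is the pushforward of $dd^cv^\Kmc_{1/k}\wedge\langle (dd^cv)^i\wedge(\pi_2^{23})^*S_2\wedge(\pi_3^{23})^*S_3\rangle_C\wedge\omega_{23}^{n-i}$, cut off by $\chi_j^n$.

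\textbf{Step 2.} I bound this limit uniformly in $k$. The value equals $\Fc^n_{S_1,j}$ evaluated at a (rescaled) current of bounded $*$-norm. Since $\Fc^n_{S_1,j}$ is continuous on the compact set $K_j$, Proposition \ref{prop:char_conti} shows it is bounded there, with $|\Fc^n_{S_1,j}(R)|\le C\|R\|_{*,j}$ by homogeneity. It therefore suffices to show that the $*$-norm of $\Lambda_k$ (after the correction of Lemma \ref{lem:DSHness}) is bounded independently of $k$. The mass of $dd^cv_{1/k}^\Kmc\wedge\langle(dd^cv)^i\wedge\cdots\rangle_C\wedge\omega_{23}^{n-i}$ on a compact set equals, by Stokes and closedness, the mass of $\langle (dd^cv)^{i+1}\wedge\cdots\rangle_C\wedge\omega_{23}^{n-i}$ plus boundary terms. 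This quantity is finite and independent of $k$ by Condition (I) and Lemma \ref{lem:induction}. The boundary terms are controlled because $v_{1/k}^\Kmc-v$ is supported near $\Delta_{23}$ and is bounded by $1+$const. Hence the limit is $O(1)$, and dividing by $\log k$ gives $0$.

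\textbf{Main obstacle.} The delicate point is the correction term in Step 2. The smooth form $A$ of Lemma \ref{lem:DSHness} has sup norm like $|\theta_0|^{-2i}$, and $\theta_0$ must be chosen below the scale $1/k$ for the closedness in $U_j^1$. This could produce growth of order $k^{2i}$, which $\log k$ cannot absorb. To avoid it, I would first try not to use $A$ at all. Instead I would regularize only in the $x_3$-direction at a fixed scale, and use that $dd^cv_{1/k}^\Kmc$ is itself closed and positive, so that $\Lambda_k$ is genuinely positive and closed in $U_j^1$ up to a cut-off error supported away from $\Delta_{23}$, where all forms are $k$-independent. If that fails, the fallback is a lower bound via the $\nu^n_{S_1,j}$ estimate as in Lemma \ref{lem:vanish}. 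That estimate splits near and far from $\Delta_{12}$: the near part is controlled by $\nu^n_{S_1,j}(\delta)$ times a $k$-uniform mass, and the far part is a smooth pairing of size $O(1)$.
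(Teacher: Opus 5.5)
Your route is essentially the paper's. The paper observes that for $|\theta'|\ll 1/k$ one has $dd^cv^\Kmc_{1/k}\wedge\Kmc^{i-1}_{v,\theta'}=\Kmc^{i}_{v,1/k}$, so the $\theta'$-dependence disappears. It then reruns the argument of Lemma \ref{lem:existence} and declares that dividing by $\log k$ gives $0$. Your Step 2 supplies the $k$-uniform bound the paper leaves implicit. It combines the uniform-in-$k$ mass of $\Kmc^{i+1}_{v,1/k}\wedge(\pi_2^{23})^*S_2\wedge(\pi_3^{23})^*S_3$, which comes from Condition (I) at level $i+1\le n$, with $|\Fc^n_{S_1,j}(R)|\le C\|R\|_{*,j}$, which follows from continuity on the compact unit ball $K_j$ and linearity. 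A small simplification: for fixed $k$ the $\theta'$ and $\theta$ limits are trivial. Indeed, $dd^cv^\Kmc_{1/k}$ vanishes on $\{|x_2-x_3|<1/(ek)\}$, so the integrand is $(\pi_3^{23})^*S_3$ wedged with a smooth form that converges in $C^\infty$.

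The ``main obstacle'' you describe does not exist, so the fallbacks are unnecessary; you have the constraint of Lemma \ref{lem:DSHness} backwards. There the Kiselman parameter must be small compared with $\theta_0$ ($|\theta'|<|\theta_0|/(\sqrt2 e^2)$). With your extra factor at scale $1/k$, the requirement is therefore $1/k\lesssim|\theta_0|$, not $|\theta_0|\lesssim 1/k$. Closedness in $U_j^1$ only needs $|\theta_0|$ small relative to the gap between $U_j^1$ and $\partial U_j^0$, so you can fix $\theta_0$ once and for all. For $k\ge \sqrt2e^2/|\theta_0|$ and $|\theta|<|\theta_0|/(\sqrt2e)$ one has $dd^cv^\Kmc_{1/k}=dd^cv$ on $\supp dd^c\chi(v^\Tmc_\theta-\log|\theta_0|)\subset\{|x_2-x_3|\ge|\theta_0|/(\sqrt2 e)\}$. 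Hence the correction equals $(\pi_2^{23})_*[(\pi_3^{23})^*S_3\wedge dd^c\chi(v^\Tmc_\theta-\log|\theta_0|)\wedge(dd^cv)^{i}\wedge\omega_{23}^{n-i}]$ plus a fixed multiple of $\omega_\euc^{s_3+1}$. This is independent of $k$, with sup norm $O(\|S_3\||\theta_0|^{-2(i+1)})$. With this, your Step 2 bounds the iterated limit by a constant $C(\theta_0)$ for all large $k$, and the lemma follows.
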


\begin{proof}
	Given $k\in \N$, when $|\theta|, |\theta'|\ll 1/k^2$, we have
	{\begin{align*}
			dd^cv_{1/k}^\Kmc\wedge\Sc_{3, \theta, \theta'}^{n, i}=(dd^cv_\theta^\Tmc)\wedge (dd^cv_{1/k}^\Kmc)\wedge (dd^cu)^{i-1}\wedge \Big(\pi_3^{23}\Big)^*S_3\wedge\omega_{23}^{n-i}.
	\end{align*}}
	Hence, we show the following limit.
	\begin{align*}
		&\lim_{k\to\infty}\lim_{\theta\to 0}\lim_{\eta\to 0}\lim_{\eta'\to 0}\frac{1}{\log k}\int_{\Xf_{12}}\chi_j^n\Uc_{1, \eta, \eta'}^{n, n}\wedge \Big(\pi_2^{12}\Big)^*\Big[S_2\wedge\Big(\pi_2^{23}\Big)_*\Big[(dd^cv_\theta^\Tmc)\wedge \Kmc_{v, 1/k}^i\wedge\Big(\pi_3^{23}\Big)^*S_3\wedge\omega_{23}^{n-i}\Big]\Big]=0.
	\end{align*}
	Compared with Lemma \ref{lem:existence}, this case has different orders in limits, but the same argument proves the limit exists as $\eta'\to 0$, $\eta\to 0$, and $\theta\to 0$ in this order. Then, it is clear that the desired limit converges to $0$ as $k\to\infty$.
\end{proof}

Since $dd^c\chi^v_k:=\frac{1}{\log k}\left(dd^cv_{1/k^2}^\Kmc-dd^cv_{1/k}^\Kmc\right)$, Lemma \ref{lem:easy_part} proves the following corollary:
\begin{corollary}\label{cor:easy_part}
	Let $i\in\{1, \ldots, n-1\}$ and $m\in\N$. Then, we have
	{ \begin{align*}
			&\lim_{k\to\infty}\lim_{\theta\to 0}\lim_{\theta'\to 0}\lim_{\eta\to 0}\lim_{\eta'\to 0}\int_{\Xf_{12}}\chi_j^n\Uc_{1, \eta, \eta'}^{n, n}\wedge \Big(\pi_2^{12}\Big)^*\Big[S_2\wedge\Big(\pi_2^{23}\Big)_*\Big[ dd^c\chi_k^v\wedge\Sc_{3, \theta, \theta'}^{n, i}\Big]\Big]=0
	\end{align*}}
\end{corollary}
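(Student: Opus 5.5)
The corollary follows from Lemma \ref{lem:easy_part} by linearity, applied once to $v_{1/k^2}^\Kmc$ and once to $v_{1/k}^\Kmc$. By definition,
\begin{align*}
dd^c\chi_k^v=\frac{1}{\log k}\,dd^cv_{1/k^2}^\Kmc-\frac{1}{\log k}\,dd^cv_{1/k}^\Kmc .
\end{align*}
So for fixed $k$ and fixed parameters $\eta',\eta,\theta',\theta$, the integral in the statement splits as the difference of two integrals:
\begin{align*}
J_k^{(2)}:=\frac{1}{\log k}\int_{\Xf_{12}}\chi_j^n\Uc_{1, \eta, \eta'}^{n, n}\wedge \Big(\pi_2^{12}\Big)^*\Big[S_2\wedge\Big(\pi_2^{23}\Big)_*\Big[ dd^cv_{1/k^2}^\Kmc\wedge\Sc_{3, \theta, \theta'}^{n, i}\Big]\Big],
\end{align*}
and $J_k^{(1)}$, defined the same way with $v_{1/k}^\Kmc$ in place of $v_{1/k^2}^\Kmc$.

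The first step is to check that each piece has a limit as $\eta'\to0$, $\eta\to0$, $\theta'\to0$, $\theta\to0$, in that order. For $J^{(1)}_k$ this is exactly what the proof of Lemma \ref{lem:easy_part} gives. For $J^{(2)}_k$ I would run the same argument with $1/k^2$ in place of $1/k$. For $|\theta|,|\theta'|\ll 1/k^2$, the form $dd^cv_{1/k^2}^\Kmc\wedge\Sc_{3,\theta,\theta'}^{n,i}$ becomes $(dd^cv^\Tmc_\theta)\wedge\Kmc^{i}_{v,1/k^2}\wedge(\pi_3^{23})^*S_3\wedge\omega_{23}^{n-i}$. One then uses the decomposition of Lemma \ref{lem:DSHness}, a positive current closed in $U_j^1$ plus a controlled smooth form, together with the continuity of the local potential functionals of $S_1$, as in Lemma \ref{lem:existence}. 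Once both limits exist, the limit of the difference equals the difference of the limits.

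The second step is the limit $k\to\infty$. For $J^{(1)}_k$ it vanishes by Lemma \ref{lem:easy_part}. For $J^{(2)}_k$, write
\[
\frac{1}{\log k}=\frac{2}{\log (k^2)},
\]
so $J^{(2)}_k$ is twice the quantity of Lemma \ref{lem:easy_part} evaluated along the subsequence $k'=k^2$. Strictly, $k^2$ need not be the index from the lemma. The lemma's proof only uses that the regularization parameter $1/k'\to0$ while the normalization is $1/\log k'$, so it applies verbatim with parameter $1/k^2$. Hence $J^{(2)}_k\to 0$ as well, and the difference tends to $0$.

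The main obstacle is making sure the estimate in Lemma \ref{lem:easy_part} depends only on the size of the regularization parameter and not on $k$ being an integer. This amounts to checking that the limit obtained in the first step is bounded by a constant depending on $S_1,S_2,S_3$ times $\log(1/\text{parameter})$ raised to a power strictly less than one, or is simply bounded uniformly. The bound comes from the uniform mass bounds of Lemma \ref{lem:DSHness} and the uniform convergence in Proposition \ref{prop:unif_conv_poten_sup}. Dividing by $\log k$ then kills it.
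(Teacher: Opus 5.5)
Your proposal is correct and matches the paper, which derives the corollary in one line from the identity $dd^c\chi_k^v=\frac{1}{\log k}\big(dd^cv_{1/k^2}^\Kmc-dd^cv_{1/k}^\Kmc\big)$ together with Lemma \ref{lem:easy_part}. Your worry about the $1/k^2$ term is unnecessary: since $k^2\in\N$, your $J_k^{(2)}$ is exactly twice the quantity of Lemma \ref{lem:easy_part} at index $k^2$, which is a subsequence, so it tends to $0$ without re-running the lemma's proof or proving any extra uniform bound.
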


\begin{lemma}\label{lem:hard_part}
	Let $i\in\{1, \ldots, n-1\}$ and $m\in\N$. Then, we have
	\begin{align*}
		&\lim_{k\to\infty}\lim_{\theta\to 0}\lim_{\theta'\to 0}\lim_{\eta\to 0}\lim_{\eta'\to 0}\int_{\Xf_{12}}\chi_j^n\Uc_{1, \eta, \eta'}^{n, n}\wedge \Big(\pi_2^{12}\Big)^*\Big[S_2\wedge\Big(\pi_2^{23}\Big)_*\Big[(d\chi_k^v\wedge d^c\chi_k^v)\wedge\Sc_{3, \theta, \theta'}^{n, i}\Big]\Big]=0
	\end{align*}
\end{lemma}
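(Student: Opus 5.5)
The plan is to reduce the quadratic term $d\chi_k^v\wedge d^c\chi_k^v$ to terms already controlled by Lemma \ref{lem:vanish} and Lemma \ref{lem:easy_part}/Corollary \ref{cor:easy_part}, using the elementary identity
\begin{align*}
	d\chi_k^v\wedge d^c\chi_k^v=\tfrac12 dd^c\big(\left(\chi_k^v\right)^2\big)-\chi_k^v\, dd^c\chi_k^v .
\end{align*}
First I would fix the sign structure. The current $d\chi_k^v\wedge d^c\chi_k^v\wedge\Sc_{3,\theta,\theta'}^{n,i}$ is positive. After push-forward by $\pi_2^{23}$ and wedging with $S_2$, it is paired against $\chi_j^n\Uc^{n,n}_{1,\eta,\eta'}$, which is $\le 0$ because $u_\eta^\Kmc\le 0$ near the diagonal. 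So the integral in question is $\le 0$, and it suffices to bound it from below by a quantity that tends to $0$ in the iterated limit.

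For the term $-\chi_k^v\,dd^c\chi_k^v$ I use $0\le\chi_k^v\le 1$ and
\begin{align*}
	-\chi_k^v\,dd^c\chi_k^v\le \tfrac{1}{\log k}\,\chi_k^v\, dd^c v^\Kmc_{1/k}\le\tfrac{1}{\log k}\, dd^c v^\Kmc_{1/k},
\end{align*}
since $dd^cv^\Kmc_{1/k^2}\ge 0$. Pairing with the nonpositive factor $\chi_j^n\Uc^{n,n}_{1,\eta,\eta'}\wedge(\pi_2^{12})^*S_2$ then gives a lower bound by exactly the quantity in Lemma \ref{lem:easy_part}, which tends to $0$. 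Before passing to the limits, I would check that every intermediate current is of the form handled in Lemma \ref{lem:existence}, so that the iterated limits $\eta'\to0$, $\eta\to0$, $\theta'\to0$, $\theta\to0$ exist. This uses the decomposition of Lemma \ref{lem:DSHness}: a current positive and closed in $U_j^1$ plus a bounded smooth form, to which the continuity of $\Fc^n_{S_1,j}$ applies.

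For the term $\tfrac12 dd^c((\chi_k^v)^2)$ I would lift everything to $\Xf=X_1\times X_2\times X_3$, writing the integral as
\begin{align*}
	\int_\Xf dd^c\big((\chi_k^v)^2\big)\wedge\Theta,\qquad \Theta:=\chi_j^n u_\eta^\Kmc\,\Kmc^{n-1}_{u,\eta'}\wedge S_1\wedge S_2\wedge\varphi\,(dd^cv_\theta^\Tmc)\wedge\Kmc^{i-1}_{v,\theta'}\wedge S_3\wedge\omega^{\ast},
\end{align*}
with the appropriate pullbacks, and then integrate by parts (Stokes, since $\chi_j^n$ and $\varphi$ have compact support) to move $dd^c$ onto $\Theta$. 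Since all factors except $\chi_j^n u_\eta^\Kmc$ and $\varphi$ are closed, $dd^c\Theta$ splits into three kinds of terms:
\begin{itemize}
	\item[(a)] $\chi_j^n\,dd^cu^\Kmc_\eta\wedge\Kmc^{n-1}_{u,\eta'}\wedge\cdots$, which is positive;
	\item[(b)] terms involving $d\chi_j^n$, $dd^c\chi_j^n$, $d\varphi$, or $dd^c\varphi$, multiplied by $u_\eta^\Kmc$ or $du^\Kmc_\eta$;
	\item[(c)] mixed first-order terms such as $d\chi_j^n\wedge d^cu^\Kmc_\eta$.
\end{itemize}

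For (a), since $0\le(\chi_k^v)^2\le\mathbf 1_{\{|x_2-x_3|\le e/k\}}$, the contribution is bounded by the $\Kmc^n_{u,\eta}$-analogue of the quantity in Lemma \ref{lem:vanish} with $m=2$. That quantity vanishes as $k\to\infty$ by the same splitting into a $\delta$-neighbourhood of $\Delta_{12}$, controlled by $\nu^n_{S_1,j}(\delta)$, and its complement, controlled by Condition (I) for $S_2,S_3$ and the absence of mass of $\langle(dd^cv)^i\wedge\cdots\rangle_C$ on $\Delta_{23}$. For (b), the term is $(\chi_k^v)^2$ times a current that, near $\Delta_{12}$, involves the continuous local potential functional of $S_1$ tested against currents with bounded $*$-norm. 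Hence it is again dominated by a Lemma \ref{lem:vanish}-type quantity and tends to $0$.

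The main obstacle is (c). These terms are not signed, and the derivatives of $\chi_j^n$ destroy the closedness used in the definition of $\Fc^n_{S_1,j}$. I would handle them with Cauchy--Schwarz for positive currents, bounding $|d\chi_j^n\wedge d^cu^\Kmc_\eta\wedge\cdots|$ by a combination of $du^\Kmc_\eta\wedge d^cu^\Kmc_\eta\le\tfrac12 dd^c(u^\Kmc_\eta)^2$-type terms and smooth terms. The key observation is that $d\chi_j^n$ is supported where $\chi_j^{n-1}\equiv1$ but $\chi_j^n$ is not, that is, in $\Uf_j^{n-1}\setminus\overline{\Uf_j^n}$. On that region I would use the lower-order functional $\Fc^{n-1}_{S_1,j}$, whose continuity is also assumed and whose cut-off $\chi_j^{n-1}$ covers the region. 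This reduces (c) to a Lemma \ref{lem:vanish}-type estimate at level $n-1$. Combining (a), (b), (c) with the bound for the $\chi_k^v\,dd^c\chi_k^v$ term gives the stated limit $0$.
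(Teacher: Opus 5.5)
\textbf{There is a genuine gap: the integration by parts onto $\chi_j^n u_\eta^\Kmc$ is where the argument breaks.} It puts derivatives on $u$. The continuity of $\Fc^n_{S_1,j}$ (equivalently $\nu^n_{S_1,j}(\delta)\to 0$) only controls the log-weighted measure $-u\,(dd^cu)^{n-1}\wedge\cdots$ near $\Delta_{12}$, not its derivatives.

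\emph{Term (a) is circular.} After $\eta'\to0$, $\eta\to0$, the measure $\chi_j^n\,dd^cu_\eta^\Kmc\wedge\Kmc^{n-1}_{u,\eta'}\wedge(\pi_1^{12})^*S_1\wedge\cdots$ becomes $(S_1\wedge S_2)_{DS}$ carried by $\Delta_{12}$. Its mass in $(\Delta_{12})_\delta$ does not become small, so the splitting of Lemma \ref{lem:vanish} gives nothing. In fact (a) tends to
\[
\int_{\Xf_{23}}(\chi_k^v)^2\,(\pi_2^{23})^*\big((\xi_j^n)^2(S_1\wedge S_2)_{DS}\big)\wedge\Sc^{n,i}_{3,\theta,\theta'},
\]
which is essentially $I'_i$ of Claim 1, the statement this lemma is used to prove. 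Your integration by parts is the inverse of the first step in the proof of Claim 1. You could sidestep this: the integral is $\le0$ and (a)$\ge0$, so (a) can be dropped from a lower bound. But then everything rests on (b) and (c).

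\emph{Term (c) fails as proposed.}
\begin{itemize}
\item Since $\chi_j^n=(\pi_1^{12})^*\xi_j^n\,(\pi_2^{12})^*\xi_j^n$, $\supp d\chi_j^n$ meets $\Delta_{12}$, so $u$ gains no smoothness there.
\item The inequality you invoke is reversed: $du^\Kmc_\eta\wedge d^cu^\Kmc_\eta=\frac12dd^c(u^\Kmc_\eta)^2-u^\Kmc_\eta\,dd^cu^\Kmc_\eta\ge\frac12dd^c(u^\Kmc_\eta)^2$.
\item The extra term $-u^\Kmc_\eta\,dd^cu^\Kmc_\eta\wedge\Kmc^{n-1}_{u,\eta'}\wedge\cdots$ lives where $|x_1-x_2|\approx|\eta|$. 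It therefore has size about $|\log|\eta||$ times the mass of $(S_1\wedge S_2)_{DS}$ on $\supp d\chi_j^n$. So your Cauchy--Schwarz factor diverges as $\eta\to0$, and that limit is taken before $k\to\infty$.
\item Appealing to $\Fc^{n-1}_{S_1,j}$ does not help. It involves $u\,(dd^cu)^{n-2}\wedge\omega$, which cannot dominate $d^cu\wedge(dd^cu)^{n-1}$.
\end{itemize}

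\textbf{The missing idea is never to let a derivative fall on $u$.} The paper completes $d\chi_k^v\wedge d^c\chi_k^v$ to the positive closed form $\Xi_k=\frac12dd^c(\chi_k^v)^2+\frac{1}{\log k}dd^cv^\Kmc_{1/k}$ on $\Xf_{23}$. Using Lemma \ref{lem:DSHness}, it builds
\[
Z_{\theta,\theta'}=S_2\wedge(\pi_2^{23})_*\big[\Xi_k\wedge\big(dd^cv_\theta^\Tmc-dd^c\chi(v_\theta^\Tmc-\log|\theta_0|)\big)\wedge\Kmc^{i-1}_{v,\theta'}\wedge(\pi_3^{23})^*S_3\wedge\omega_{23}^{n-i}\big]+M_{\theta_0}S_2\wedge\omega_\euc^{s_3+1}.
\]
This current is positive, closed in $U_j^1$ and of bounded mass, so it is an admissible argument for $\Fc^n_{S_1,j}$.

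Condition (I) for $(S_2,S_3)$ then does the work. The limit currents have no mass on $\Delta_{23}$, and $v^\Kmc_{1/k}/\log k\to0$ uniformly. Hence $Z_{\theta,\theta'}-M_{\theta_0}S_2\wedge\omega_\euc^{s_3+1}\to0$ in the $*$-topology, and continuity of $\Fc^n_{S_1,j}$ kills that term.

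The remaining pieces are handled as follows:
\begin{itemize}
\item The $\theta_0$-cutoff piece only sees $\frac{1}{\log k}dd^cv^\Kmc_{1/k}$.
\item The correction $\chi_k^v dd^c\chi_k^v+\frac1{\log k}dd^cv^\Kmc_{1/k}$ is handled by Lemma \ref{lem:easy_part} and Corollary \ref{cor:easy_part}.
\end{itemize}
Your lower bound for $-\chi_k^v\,dd^c\chi_k^v$ via Lemma \ref{lem:easy_part} is correct and matches this last step.
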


\begin{proof}
	We may assume that $k\gg 1$. Note that $d\chi_k^v\wedge d^c\chi_k^v$ is positive but not closed. However, we have
	\begin{align*}
		d\chi_k^v\wedge d^c\chi_k^v + \chi_k^vdd^c\chi_k^v=\frac{1}{2}dd^c(\chi_k^v)^2,\quad\chi_k^vdd^c\chi_k^v=\frac{\chi_k^v}{\log k}\left(dd^cv_{1/k^2}^\Kmc-dd^cv_{1/k}^\Kmc\right).
	\end{align*}
	So, adding $\chi_k^vdd^c\chi_k^v+\frac{dd^cv^\Kmc_{1/k}}{\log k}$ makes it positive. Namely, 
	\begin{align*}
		\Xi_k:=d\chi_k^v\wedge d^c\chi_k^v + \chi_k^vdd^c\chi_k^v+\frac{dd^cv^\Kmc_{1/k}}{\log k}=\frac{1}{2}dd^c(\chi^v_k)^2+\frac{dd^cv_{1/k}^\Kmc}{\log k}
	\end{align*}
	is a smooth positive closed $(1, 1)$-current on $\C^n\times \C^n$. The following smooth form is closed in $U_j^1$ when $|\theta_0|\ll 1$:	
	$$\Big(\pi_2^{23}\Big)_*\Big[\Xi_k\wedge\left(dd^cv_\theta^\Tmc-dd^c \chi\left(v_\theta^\Tmc-\log|\theta_0|\right)\right)\wedge \Kmc_{v, \theta'}^{i-1}\wedge\Big(\pi_3^{23}\Big)^*S_3\wedge\omega_{23}^{n-i}\Big].$$
	By the same argument as in Lemma \ref{lem:DSHness}, for a constant $M_{\theta_0}>0$ depending on $\theta_0$, the current
	\begin{align*}
		Z_{\theta, \theta'}:=&S_2\wedge\Big(\pi_2^{23}\Big)_*\Big[\Xi_k\wedge\left(dd^cv_\theta^\Tmc-dd^c \chi\left(v_\theta^\Tmc-\log|\theta_0|\right)\right)\wedge \Kmc_{v, \theta'}^{i-1}\wedge\Big(\pi_3^{23}\Big)^*S_3\wedge\omega_{23}^{n-i}\Big] + M_{\theta_0}S_2\wedge\omega_\euc^{s_3+1}
	\end{align*}
	is a positive closed current in $U_j^1$ when $|\theta_0|\ll 1$. We estimate the mass of $Z_{\theta, \theta'}$ over $U_j^n$.
	\begin{align*}
		\|Z_{\theta, \theta'}\|_{U_j^n}&\le \int \chi_j^{n}\omega_\euc^{s_1-1}\wedge S_2\wedge\Big(\pi_2^{23}\Big)_*\Big[\Xi_k\wedge\left(dd^cv_\theta^\Tmc-dd^c \chi\left(v_\theta^\Tmc-\log|\theta_0|\right)\right)\\
		&\quad\quad\quad\quad\quad\quad\quad\quad\quad\quad\quad\wedge \Kmc_{v, \theta'}^{i-1}\wedge\Big(\pi_3^{23}\Big)^*S_3\wedge\omega_{23}^{n-i}\Big]+M_{\theta_0}\int \chi_j^{n}\omega_\euc^{n-s_2}\wedge S_2 \\
		&=\int \Big(\pi_2^{23}\Big)_*\left(\chi_j^{n}\omega_\euc^{s_1-1}\wedge S_2\right)\wedge\Big[\Xi_k\wedge\left(dd^cv_\theta^\Tmc-dd^c \chi\left(v_\theta^\Tmc-\log|\theta_0|\right)\right)\\
		&\quad\quad\quad\quad\quad\quad\quad\quad\quad\quad\quad\wedge \Kmc_{v, \theta'}^{i-1}\wedge\Big(\pi_3^{23}\Big)^*S_3\wedge\omega_{23}^{n-i}\Big]+M_{\theta_0}\int \chi_j^{n}\omega_\euc^{n-s_2}\wedge S_2\\
		&=\int \Big(\pi_2^{23}\Big)_*\left(dd^c\chi_j^{n}\wedge \omega_\euc^{s_1-1}\wedge S_2\right)\wedge\Big[\left(\frac{1}{2}(\chi^v_k)^2+\frac{v^\Kmc_{1/k}}{\log k}\right)\left(dd^cv_\theta^\Tmc-dd^c \chi\left(v_\theta^\Tmc-\log|\theta_0|\right)\right)\\
		&\quad\quad\quad\quad\quad\quad\quad\quad\quad\quad\quad\quad\quad\quad\quad\quad\quad\wedge \Kmc_{v, \theta'}^{i-1}\wedge\Big(\pi_3^{23}\Big)^*S_3\wedge\omega_{23}^{n-i}\Big]+M_{\theta_0}\int \chi_j^{n}\omega_\euc^{n-s_2}\wedge S_2.
	\end{align*}
	Since $\left(\frac{1}{2}(\chi^v_k)^2+\frac{v^\Kmc_{1/k}}{\log k}\right)$ is a bounded function, Condition (I) for $S_2$ and $S_3$ implies that the mass $\|Z_{\theta, \theta'}\|$ is uniformly bounded.\medskip
	
	In place of $\chi_j^{n}\omega_\euc^{s_1-1}$, we put a smooth test form $\varphi$ of bidegree $(s_1-1, s_1-1)$, Lemma \ref{lem:induction} and Condition (I) for $S_2$ and $S_3$ imply the convergence of $Z_{\theta, \theta'}-M_{\theta_0}S_2\wedge\omega_\euc^{s_3+1}$ as $\theta'\to 0$ and then $\theta\to 0$ in the sense of currents. Furthermore, as $k\to\infty$, we see that the support $\frac{1}{2}(\chi^v_k)^2$ shrinks to $\Delta_{23}$ and $\frac{v^\Kmc_{1/k}}{\log k}$ converges uniformly to $0$. Condition (I) for $S_2$ and $S_3$ implies that the current $\left\langle (dd^cu)^i\wedge \Big(\pi_2^{23}\Big)^*S_2\wedge\Big(\pi_3^{23}\Big)^*S_3 \right\rangle_C$ has no mass on $\Delta_{23}$. Therefore, we have $\displaystyle \lim_{k\to\infty}\lim_{\theta\to 0}\lim_{\theta'\to 0}\left(Z_{\theta, \theta'}-M_{\theta_0}S_2\wedge\omega_\euc^{s_3+1}\right)=0$ in the sense of $*$-topology on $U_j^n$.\medskip
	
	Concerning the limit to be estimated, we have	
	{\begin{align}
			\notag&\int_{\Xf_{12}}\chi_j^n\Uc_{1, \eta, \eta'}^{n, n}\wedge \Big(\pi_2^{12}\Big)^*\Big[S_2\wedge\Big(\pi_2^{23}\Big)_*\Big[(d\chi_k^v\wedge d^c\chi_k^v)\wedge\Sc_{3, \theta, \theta'}^{n, i}\Big]\Big]\\
			\label{eq:4.7-1}&=\int_{\Xf_{12}}\chi_j^n\Uc_{1, \eta, \eta'}^{n, n}\wedge \Big(\pi_2^{12}\Big)^*Z_{\theta, \theta'}-M_{\theta_0}\int_{\Xf_{12}}\chi_j^n\Uc_{1, \eta, \eta'}^{n, n}\wedge \Big(\pi_2^{12}\Big)^*(S_2\wedge\omega_\euc^{s_3+1})\\
			\label{eq:4.7-2}&\quad+\int_{\Xf_{12}}\chi_j^n\Uc_{1, \eta, \eta'}^{n, n}\wedge \Big(\pi_2^{12}\Big)^*\Big[S_2\wedge\Big(\pi_2^{23}\Big)_*\Big[\Xi_k\wedge dd^c \chi\left(v_\theta^\Tmc-\log|\theta_0|\right)\wedge\Kmc_{v, \theta'}^{i-1}\wedge\Big(\pi_3^{23}\Big)^*S_3\wedge\omega_{23}^{n-i}\Big]\Big]\\
			\label{eq:4.7-3}&\quad-\int_{\Xf_{12}}\chi_j^n\Uc_{1, \eta, \eta'}^{n, n}\wedge \Big(\pi_2^{12}\Big)^*\left( S_2\wedge\Big(\pi_2^{23}\Big)_*\left[\left(\chi_k^vdd^c\chi_k^v+\frac{dd^cv_{1/k}}{\log k}\right)\wedge \Sc_{3, \theta, \theta'}^{n,i}\right]\right)
	\end{align}}
	The continuity of local potential functionals of $S_1$ implies Condition (I) with every positive closed current. Together with Lemma \ref{lem:induction}, Condition (I) induces the convergence of \eqref{eq:4.7-1} as $\eta'\to 0$, $\eta\to 0$ in this order. Since $\displaystyle\lim_{k\to\infty}\lim_{\theta\to 0}\lim_{\theta'\to 0}\left(Z_{\theta, \theta'}-M_{\theta_0}S_2\wedge\omega_\euc^{s_3+1}\right)=0$ in the sense of $*$-topology on $U_j^n$, the continuity of local potential functionals of $S_1$ says \eqref{eq:4.7-1} converges to $0$ as we take all the limits in the right order.\smallskip
	
	For \eqref{eq:4.7-2}, the support of $d\chi_k^v\wedge d^c\chi_k^v + \chi_k^v dd^c\chi_k^v$ uniformly converges to $\Delta_{23}$ but the support of $dd^c \chi\left(v_\theta^\Tmc-\log|\theta_0|\right)$ is away from $\Delta_{23}$. Hence, for all sufficiently large $k\in\N$ and for $\theta'\in\C^*$ with $|\theta'|\ll 1/k$, we have
	{\small \begin{align*}
		\eqref{eq:4.7-2}&=\int_{\Xf_{12}}\chi_j^n\Uc_{1, \eta, \eta'}^{n, n}\wedge \Big(\pi_2^{12}\Big)^*\bigg[S_2\wedge\Big(\pi_2^{23}\Big)_*\bigg[\frac{dd^cv_{1/k}}{\log k}\wedge dd^c \chi\left(v_\theta^\Tmc-\log|\theta_0|\right)\wedge\Kmc_{v, \theta'}^{i-1}\wedge\Big(\pi_3^{23}\Big)^*S_3\wedge\omega_{23}^{n-i}\bigg]\bigg]\\
		&=\frac{1}{\log k}\int_{\Xf_{12}}\chi_j^n\Uc_{1, \eta, \eta'}^{n, n}\wedge \Big(\pi_2^{12}\Big)^*\Big[S_2\wedge\Big(\pi_2^{23}\Big)_*\Big[dd^c \chi\left(v_\theta^\Tmc-\log|\theta_0|\right)\wedge\Kmc_{v, 1/k}^{i}\wedge\Big(\pi_3^{23}\Big)^*S_3\wedge\omega_{23}^{n-i}\Big]\Big].
	\end{align*}}
	Since $\Big(\pi_2^{23}\Big)_*\Big[dd^c \chi\left(v_\theta^\Tmc-\log|\theta_0|\right)\wedge\Kmc_{v, 1/k}^{i}\wedge\Big(\pi_3^{23}\Big)^*S_3\wedge\omega_{23}^{n-i}\Big]$ is a smooth form uniformly bounded with respect to $\theta$, $\theta'$ and $k$. Hence, the continuity of local potential functionals of $S_1$ yields the convergence of \eqref{eq:4.7-2} to $0$.\smallskip
	
	Lemma \ref{lem:easy_part} and Corollary \ref{cor:easy_part} imply the convergence of \eqref{eq:4.7-3} to $0$. Hence, summing up, we obtain the desired convergence to $0$.
\end{proof}

Now, we are ready to prove Claim 1.
\begin{proof}[Proof of Claim 1]
	For a smooth test function $\varphi$ on $U_j^n\times U_j^n\subset \Xf_{23}$, we estimate
	{\begin{align*}
			\lim_{k\to\infty}\lim_{\theta\to 0}\int_{\Xf_{23}}	\varphi\chi_k^v(dd^cv_\theta^\Tmc)\wedge \Big\langle (dd^cv)^{i-1}\wedge \Big(\pi_2^{23}\Big)^*\Big((S_1\wedge S_2)_{DS}\Big)\wedge\Big(\pi_3^{23}\Big)^*S_3\Big\rangle_C \wedge \omega_{23}^{n-i}
	\end{align*}}
	
	By Definition \ref{def:prod_C} and \cite[Theorem 6.10]{Ahn25}, we have
	{\begin{align*}
			&\int_{\Xf_{23}}\varphi\chi_k^v(dd^cv_\theta^\Tmc)\wedge \Big\langle (dd^cv)^{i-1}\wedge \Big(\pi_2^{23}\Big)^*\Big((S_1\wedge S_2)_{DS}\Big)\wedge\Big(\pi_3^{23}\Big)^*S_3\Big\rangle_C \wedge\omega_{23}^{n-i}\\
			&=\lim_{\theta'\to 0}\int_{\Xf_{23}}\varphi\chi_k^v(dd^cv_\theta^\Tmc)\wedge \Kmc_{v, \theta'}^{i-1}\wedge \Big(\pi_2^{23}\Big)^*\Big((S_1\wedge S_2)_{DS}\Big)\wedge\Big(\pi_3^{23}\Big)^*S_3\wedge\omega_{23}^{n-i}\\
			&=\lim_{\theta'\to 0}\lim_{\eta\to 0}\int_{X_2} \Big(\pi_2^{12}\Big)_*\Big[\Kmc_{u, \eta}^n\wedge \Big(\pi_1^{12}\Big)^*S_1\Big]\wedge S_2\wedge\Big(\pi_2^{23}\Big)_*\left[\varphi\chi_k^v(dd^cv_\theta^\Tmc)\wedge \Kmc_{v, \theta'}^{i-1}\wedge\Big(\pi_3^{23}\Big)^*S_3\wedge\omega_{23}^{n-i}\right]\\
			&=\lim_{\theta'\to 0}\lim_{\eta\to 0}\lim_{\eta'\to 0}\int_{\Xf_{12}} \Uc_{1, \eta, \eta'}^{n, n}\wedge \Big(\pi_2^{12}\Big)^*\bigg[S_2\wedge\Big(\pi_2^{23}\Big)_*\left[dd^c(\varphi\chi_k^v)\wedge\Sc_{3, \theta, \theta'}^{n, i}\right]\bigg].
	\end{align*}}
	We can write
	\begin{align}
		\label{eq:Prop_easy_part}&\Uc_{1, \eta, \eta'}^{n, n}\wedge \Big(\pi_2^{12}\Big)^*\bigg[S_2\wedge\Big(\pi_2^{23}\Big)_*\left[dd^c(\varphi\chi_k^v)\wedge\Sc_{3, \theta, \theta'}^{n, i}\right]\bigg]\\
		\label{eq:est1}&= \Uc^{n, n}_{1, \eta, \eta'}\wedge \Big(\pi_2^{12}\Big)^*\left(S_2\wedge\Big(\pi_2^{23}\Big)_*\left(\varphi dd^c\chi_k^v\wedge\Sc^{n, i}_{3, \theta, \theta'}\right)\right)\\
		\label{eq:est2}&+\Uc^{n, n}_{1, \eta, \eta'}\wedge \Big(\pi_2^{12}\Big)^*\left(S_2\wedge\Big(\pi_2^{23}\Big)_*\left(\chi_k^vdd^c\varphi\wedge\Sc^{n, i}_{3, \theta, \theta'}\right)\right)\\
		\label{eq:est3}&+\Uc^{n, n}_{1, \eta, \eta'}\wedge \Big(\pi_2^{12}\Big)^*\left(S_2\wedge\Big(\pi_2^{23}\Big)_*\left(\left(d\varphi\wedge d^c\chi_k^v+d\chi_k^v\wedge d^c\varphi\right)\wedge\Sc^{n, i}_{3, \theta, \theta'}\right)\right).
	\end{align}
	
	Lemma \ref{lem:easy_part} proves the convergence of \eqref{eq:est1} to $0$. Lemma \ref{lem:vanish} proves the convergence of \eqref{eq:est2} to $0$. For \eqref{eq:est3}, the positivity or negativity of each current in the integrand except $\left(d\varphi\wedge d^c\chi_k^v+d\chi_k^v\wedge d^c\varphi\right)$, the Cauchy-Schwarz inequality and Lemmas \ref{lem:existence} and \ref{lem:hard_part} complete the proof.
\end{proof}

\subsection{Proof of Claim 2} Let $\phi$ be a smooth function with compact support in $\xi_j^n$ and $\Phi=\chi_j^n\big(\pi_2^{12}\big)^*\phi$, which is another smooth function with compact support in $\Uf_j^n$ such that $\Phi=\big(\pi_2^{12}\big)^*\phi$ near $\Delta_{12}$. We have $$\supp \Big(dd^c \Big(\chi_j^n\big(\pi_2^{12}\big)^*\phi\Big)- \Big(\chi_j^n\big(\pi_2^{12}\big)^*(dd^c\phi)\Big)\Big)\cap \Delta_{12}=\emptyset.$$ Since $S_1$ admits continuous local potential functionals, by \cite[Theorem 1.1]{Ahn25}, we have
\begin{align*}
	&\big\langle\big(S_1\wedge \big(S_2\wedge S_3\big)_{DS}\big)_{DS}, \phi\big\rangle\\
	&=\int_{\Xf_{12}} u(dd^c u)^{n-1}\wedge \big(\pi_1^{12}\big)^*S_1\wedge \big(\pi_2^{12}\big)^*(S_2\wedge S_3)_{DS}\wedge dd^c\Phi\\
	&=\int_{\Xf_{12}} \chi_j^nu(dd^c u)^{n-1}\wedge  \big(\pi_1^{12}\big)^*S_1\wedge \big(\pi_2^{12}\big)^*\left((S_2\wedge S_3)_{DS}\wedge dd^c\phi\right)\\
	&\quad+\int_{\Xf_{12}} u(dd^c u)^{n-1}\wedge \left(dd^c \left(\chi_j^n\big(\pi_2^{12}\big)^*\phi\right)- \left(\chi_j^n\big(\pi_2^{12}\big)^*(dd^c\phi)\right)\right)\wedge\big(\pi_1^{12}\big)^*S_1\wedge \big(\pi_2^{12}\big)^*(S_2\wedge S_3)_{DS}\\
	&=\lim_{\theta\to 0}\int_{\Xf_{12}} \chi_j^nu(dd^c u)^{n-1}\wedge  \big(\pi_1^{12}\big)^*S_1\wedge \big(\pi_2^{12}\big)^*\Big[(dd^c\phi\wedge S_2) \wedge \big(\pi_2^{23}\big)_*\Big[\big(\pi_3^{23}\big)^*S_3\wedge \Kmc_{v, \theta}^n\Big] \Big]\\
	&\quad+\lim_{\theta\to 0}\int_{\Xf_{12}} u(dd^c u)^{n-1}\wedge \left(dd^c \left(\chi_j^n\big(\pi_2^{12}\big)^*\phi\right)- \left(\chi_j^n\big(\pi_2^{12}\big)^*(dd^c\phi)\right)\right)\\
	&\quad\quad\quad\quad\quad\quad\quad\quad\quad\quad\wedge\big(\pi_1^{12}\big)^*S_1\wedge \big(\pi_2^{12}\big)^*\Big[S_2 \wedge \big(\pi_2^{23}\big)_*\Big[\big(\pi_3^{23}\big)^*S_3\wedge\Kmc_{v, \theta}^{n}\Big] \Big]\\
	&=\lim_{\theta\to 0}\lim_{\eta\to 0}\int_{\Xf_{12}}u_\eta^\Kmc(dd^cu)^{n-1}\wedge dd^c\Phi\wedge \big(\pi_1^{12}\big)^*S_1\wedge \big(\pi_2^{12}\big)^*S_2\wedge \big(\pi_2^{12}\big)^*\big(\pi_2^{23}\big)_*\Big[\big(\pi_3^{23}\big)^*S_3\wedge\Kmc_{v, \theta}^{n}\Big]\\
	&=\lim_{\theta\to 0}\int_{X_2} \phi\big(S_1\wedge S_2\big)_{DS}\wedge \big(\pi_2^{23}\big)_*\Big[\big(\pi_3^{23}\big)^*S_3\wedge \Kmc_{v, \theta}^{n}\Big]=\big\langle\big(\big(S_1\wedge S_2\big)_{DS}\wedge S_3\big)_{DS}, \varphi\big\rangle.
\end{align*}
Note that by the last object, we mean the shadow of the tangent currents of $\big(\pi_2^{23}\big)^*\big(S_1\wedge S_2\big)_{DS}\wedge \big(\pi_3^{23}\big)^*S_3$. The third to last equality comes from the continuity of local potential functionals of $S_1$. The second to last equality is from Proposition \ref{prop:uniform_classical_prod}.
The claim is proved. In particular, as discussed previously, $\big(\big(S_1\wedge S_2\big)_{DS}\wedge S_3\big)_{DS}$ exists and $\big(S_1\wedge \big(S_2\wedge S_3\big)_{DS}\big)_{DS}=\big(\big(S_1\wedge S_2\big)_{DS}\wedge S_3\big)_{DS}$.\hspace*{\fill} $\qed$
\medskip

\begin{proposition}\label{prop:2nd_part}
	Under the assumptions of Theorem \ref{thm:associativity}, for each $j\in J$, the Dinh-Sibony product $\big(S_1\wedge S_2\wedge S_3\big)_{DS}$ of $S_1$, $S_2$ and $S_3$ is well defined on $U_j^n$ and we have
	\begin{align*}
		\left(S_1\wedge (S_2\wedge S_3)_{DS}\right)_{DS}=\left(S_1\wedge S_2\wedge S_3\right)_{DS}\textrm{ on }U_j^n.
	\end{align*}
\end{proposition}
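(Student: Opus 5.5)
We prove Proposition \ref{prop:2nd_part} by computing the shadow of the tangent current of $\big(\pi_1\big)^*S_1\wedge\big(\pi_2\big)^*S_2\wedge\big(\pi_3\big)^*S_3$ along the small diagonal $\Delta\subset\Xf$ in two stages. Near $\Delta$ we use the functions $u=\log|x_1-x_2|$ and $v=\log|x_2-x_3|$; their joint level sets $\{|x_1-x_2|\lesssim\theta,\ |x_2-x_3|\lesssim\theta'\}$ are comparable to tubular neighborhoods of $\Delta$. We compute the relevant $h$-dimension integrals and the shadow as iterated limits: first in $v$ (the $\Delta_{23}$ direction), then in $u$ (the $\Delta_{12}$ direction). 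Concretely, we first verify, via \cite[Lemma 2.16 and Theorem 2.15]{Ahn25} adapted to $\Delta$, that tangent currents along $\Delta$ exist and have minimal $h$-dimension. This amounts to the existence of the limits of
\begin{align*}
\int_{\Xf}\varphi\,\chi\cdot\Kmc_{u,\eta}^{a}\wedge\Kmc_{v,\theta}^{b}\wedge\pi_1^*S_1\wedge\pi_2^*S_2\wedge\pi_3^*S_3\wedge\omega^{2n-a-b}
\end{align*}
and to their vanishing, when cut off near $\Delta$, for $a+b<2n$. Here $\chi$ is a product cut-off of the type $\chi_k^u\chi_k^v$ from Lemmas \ref{lem:cut_off_12} and \ref{lem:cut_off_23}.

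Next we push forward along $\pi_{12}$ after integrating in $x_3$. The $v$-part then turns into $(\pi_2^{12})^*\big(S_2\wedge(\pi_2^{23})_*[\pi_3^*S_3\wedge\Kmc^{b}_{v,\theta}\wedge\cdots]\big)$, which is exactly the object handled in Subsection on Claim 1. Using Lemma \ref{lem:DSHness}, we decompose it into a positive current closed in $U_j^1$ with uniformly bounded mass, plus a smooth form bounded by $c''_{TK}\|S_3\||\theta_0|^{-2i}$. The continuity of the local potential functionals of $S_1$ then gives convergence uniformly in $\theta$, via Proposition \ref{prop:unif_conv_poten_sup} and Proposition \ref{prop:uniform_classical_prod}. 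This uniformity is what lets us interchange the $\eta$- and $\theta$-limits. The mixed terms, in which $\chi_k^v$ or $d\chi_k^v\wedge d^c\chi_k^v$ appear, are shown to vanish as $k\to\infty$. We reuse Lemmas \ref{lem:vanish}, \ref{lem:easy_part}, \ref{lem:hard_part} and Corollary \ref{cor:easy_part} verbatim, now with $\Kmc_{u,\eta}^a$ in place of $\Uc^{n,n}_{1,\eta,\eta'}$. Condition (I) for $S_2,S_3$ guarantees that $\langle(dd^cv)^i\wedge\pi_2^*S_2\wedge\pi_3^*S_3\rangle_C$ has no mass on $\Delta_{23}$. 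Symmetrically, the $\nu_{S_1,j}^i$ estimates of Theorem \ref{thm:main_char} control the mass near $\Delta_{12}$. This yields both the existence of the limit and minimality of the $h$-dimension on $U_j^n$.

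For the identification, we test the shadow against $\phi$ supported in $U_j^n$ with top degree in $u$ and $v$. The integral then becomes
\begin{align*}
\lim_{\eta\to0}\lim_{\theta\to0}\int_{\Xf}\Phi\,\Kmc^n_{u,\eta}\wedge\Kmc^n_{v,\theta}\wedge\pi_1^*S_1\wedge\pi_2^*S_2\wedge\pi_3^*S_3 .
\end{align*}
Performing the $\theta$-limit first gives $\int_{\Xf_{12}}\Phi\Kmc^n_{u,\eta}\wedge(\pi_1^{12})^*S_1\wedge(\pi_2^{12})^*(S_2\wedge S_3)_{DS}$, by the definition of $(S_2\wedge S_3)_{DS}$ under Condition (I) \cite[Theorem 1.1]{Ahn25}. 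The $\eta$-limit then gives $\langle(S_1\wedge(S_2\wedge S_3)_{DS})_{DS},\phi\rangle$, as in the proof of Claim 2. Uniqueness of the tangent current follows from \cite[Proposition 3.10]{Ahn25} once the shadow is shown independent of the approximating sequence. The reduction $s_1+s_2+s_3=n$ is justified by \cite[Proposition 3.5 and Remark 4.9]{DS18}.

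The main obstacle is justifying the interchange of the $\eta$- and $\theta$-limits. Equivalently, we must show that the joint regularization along $\Delta$ agrees with the iterated one. The weights $\Kmc_{u,\eta}^a\wedge\Kmc_{v,\theta}^b$ are not obviously comparable to the Dinh–Sibony regularizations along $\Delta$. To handle this, we would first invoke \cite[Remark \ref{rmk:independence_of_admissible_map}]{KV}-type independence to reduce to the product coordinates $(x_1-x_2,x_2-x_3)$ on the normal bundle of $\Delta$. We would then use the uniform convergence in Proposition \ref{prop:unif_conv_poten_sup}, over the bounded family of currents produced by Lemma \ref{lem:DSHness}, to obtain the interchange. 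If this fails, we would fall back on a diagonal argument along sequences $\eta_l,\theta_l\to0$, using the monotonicity of $\Fc^i_{S_1,j,\eta}$ in $\eta$.
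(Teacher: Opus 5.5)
Your proposal follows essentially the paper's route, and the outline is correct. The paper also obtains existence and minimality of the $h$-dimension of the tangent current of $T$ along $\Delta$ from mixed $u$/$v$ regularizations with cut-offs $\chi_m^u\chi_m^v$, reducing them (after pushing forward to $\Xf_{12}$ and integrating by parts in $u$) to the Claim~1 estimates and Lemma~\ref{lem:DSHness}, and it identifies the shadow by showing via Proposition~\ref{prop:unif_conv_poten_sup} that the diagonal limit equals the iterated one ($v$ first, then $u$). The ``comparability'' you flag is in fact exact in the paper: take $\Omega=\Kmc_{u,1}^n\wedge\Kmc_{v,1}^n$ (closed, compactly supported in each fiber and cohomologous there to a point, so an allowed choice in Definition~\ref{def:shadow}) and the fiber K\"ahler form $\frac12dd^c\log(1+|z|^2)+\frac12dd^c\log(1+|w|^2)$; their $A_\lambda$-pullbacks are precisely $\Kmc_{u,1/\lambda}^n\wedge\Kmc_{v,1/\lambda}^n$ and $dd^cu^{\Tmc}_{1/\lambda}$, $dd^cv^{\Tmc}_{1/\lambda}$, so no change of admissible map and no fallback diagonal argument is needed.
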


As previously, we consider $U_j^0\times U_j^0 \times U_j^0\subset \Xf$ and $(x_1, x_2, x_3)$ the coordinates for $U_j^0\times U_j^0 \times U_j^0$. We use the coordinates $(x_2, x_2, x_2)$ for the diagonal submanifold $\Delta$ of $U_j^0\times U_j^0 \times U_j^0$. The normal bundle $E$ of $\Delta$ in $U_j^0\times U_j^0 \times U_j^0$ can be written as $E:=\C^n\times U_j^0\times \C^n$ with the projection map $\pi: E \to \Delta$ defined by $\pi(z, x, w)=(x, x, x)$. We may consider $U_j^0\times U_j^0 \times U_j^0$ a subset of $E$ with the embedding $\tau(x_1, x_2, x_3)=(z :=x_1-x_2, x:=x_2, w:=x_3-x_2)$, which is the holomorphic admissible map. With respect to the coordinates $(z, x, w)$, we can write $\pi_1(z, x, w)=z+x$, $\pi_2(z, x, w)=x$, $\pi_3(z, x, w)=w+x$. We compactify $E$ by adding the hyperplane at infinity and denote it by $\overline{E}:=\P(E\oplus \C)$. Then, on each fiber of $\overline{E}$, we have the natural homogeneous coordinates $[z:w:t]\in\P^{2n}$ and each point in $\overline{E}$ can be written as $(x, [z:w:t])$ and $U_j^0\times U_j^0 \times U_j^0$ may be identified with $(x_2, [x_1-x_2: x_3-x_2: 1])$. The action $A_\lambda$ of multiplication by $\lambda$ on fibers of $E$ extends to $\overline{E}$ and can be written as $A_\lambda(x, [z: w: t])=(x, [\lambda z: \lambda w: t])$.\medskip

We use the following K\"ahler forms $\omega$ on $E$ and $\omega_F$ on the fiber space $\C^{2n}$ of $E$: $\omega = dd^c|x|^2 + \frac{1}{2}dd^c\log(1+|z|^2)+\frac{1}{2}dd^c\log(1+|w|^2)$ and $\omega_F=\frac{1}{2}dd^c\log(1+|z|^2)+\frac{1}{2}dd^c\log(1+|w|^2)$, which are different from the one in \cite{Ahn25}. Recall the definitions of $u$, $v$ and their related functions and forms were introduced right after Theorem \ref{thm:associativity}.\medskip

For notational convenience, we write $\pi_1=\pi_1^{12}\circ\pi_{12}$, $\pi_2=\pi_2^{12}\circ\pi_{12}=\pi_2^{23}\circ\pi_{23}$, $\pi_3=\pi_3^{23}\circ\pi_{23}$ and 
$$T:=\pi_1^* S_1\wedge \pi_2^*S_2\wedge \pi_3^*S_3.$$
\begin{proposition}
	\label{prop:existence_tan_cur_S123} Let $j\in J$. Under the assumptions of Theorem \ref{thm:associativity}, tangent currents of $T$ along $\Delta$ exist and its $h$-dimension is minimal in $U_j^n$.
\end{proposition}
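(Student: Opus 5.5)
We plan to follow the scheme already used for Claim 1, now on $\Xf=X_1\times X_2\times X_3$ along $\Delta$. As in \cite[Lemma 2.16 and Theorem 2.15]{Ahn25}, existence of tangent currents of $T$ along $\Delta$ in $U_j^n$ and minimality of their $h$-dimension reduce to estimates for integrals of the form
\begin{align*}
	\int_{\Xf}\varphi\,\chi_k\,\left(dd^c w_\theta^\Tmc\right)\wedge \Kmc_{w, \theta'}^{i-1}\wedge T\wedge \omega^{2n-i},
\end{align*}
where $w=\tfrac12\log(|x_1-x_2|^2+|x_2-x_3|^2)$ is the log-distance to $\Delta$ and $\chi_k$ is a cut-off near $\Delta$ as in Lemmas \ref{lem:cut_off_12} and \ref{lem:cut_off_23}. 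We need two things. First, the iterated limits in $\theta'$ and $\theta$ should exist for $k\gg1$. Second, their $k\to\infty$ limit should vanish for $i\le 2n-1$, which is minimality since $s_1+s_2+s_3=n$ and $\Delta$ has codimension $2n$.

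The key step is to dominate the singularity along $\Delta$ by singularities along $\Delta_{12}$ and $\Delta_{23}$. Since $w\ge\max(u,v)$ and $\Delta=\Delta_{12}\cap\Delta_{23}$, we will write $(dd^cw)^{i}$ near $\Delta$ as a combination of terms $(dd^cu)^a\wedge(dd^cv)^b\wedge(\text{smooth})$ with $a,b\le n$ and $a+b=i$. This follows from comparison in the fiber coordinates $(z,w)$ of $E=\C^n\times U_j^0\times\C^n$, using that $\P^{2n}$-classes split into products of classes from the two $\P^{n-1}$ factors up to cohomologous smooth forms. We then push forward by $\pi_{23}$ and $\pi_{12}$. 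The $v$-part produces, on $X_2$, exactly the transforms $(\Tc_\theta\Kc_{\theta'})^{m,i}(S_3)$. After adding the correction $A(\theta_0,\theta,\theta')$ of Lemma \ref{lem:DSHness}, $S_2\wedge(\Tc_\theta\Kc_{\theta'})^{m,i}(S_3)-S_2\wedge A$ becomes an element of $\Cc(U_j^1;U_j^0)$ with uniformly bounded mass, hence a multiple of an element of $K_j$. The $u$-part is then tested by the local potential functionals of $S_1$. By Theorem \ref{thm:main_char} and Proposition \ref{prop:uniform_classical_prod}, the $\eta,\eta'$ limits are uniform in these currents, so the limits can be interchanged. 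The existence argument then follows Lemma \ref{lem:existence} and the splitting \eqref{eq:nearDelta23}/\eqref{eq:test}.

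For vanishing, we split the region near $\Delta$ with $\chi^{\Delta_{12}}_\delta$. Near $\Delta_{12}$ the contribution is controlled by $\nu^n_{S_1,j}(\delta)$ times the uniform mass bound, as in \eqref{eq:integral_delta}. Away from $\Delta_{12}$ the $u$-factor is smooth. The remaining integral then lives near $\Delta_{23}$, where Condition (I) for $S_2,S_3$ gives no mass of $\langle (dd^cv)^b\wedge\pi_2^*S_2\wedge\pi_3^*S_3\rangle_C$ on $\Delta_{23}$, exactly as in Lemma \ref{lem:vanish}. The terms containing $d\chi_k\wedge d^c\chi_k$ will be handled by the positivization $\Xi_k$ and the Cauchy–Schwarz argument of Lemma \ref{lem:hard_part}, with Lemma \ref{lem:easy_part} for the $dd^c\chi_k$ term.

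The main obstacle will be the first step: rigorously comparing $\Kmc_{w,\theta}$-type forms with products of $u$- and $v$-regularizations uniformly in $\theta$. The difficulty is controlling the cross terms where both $a$ and $b$ are large, i.e. $a+b>n$ relative to one factor. We plan to use the mixed terms $dd^cw\ge$ convex combinations together with the fact that $(dd^cu)^{a}=0$ for $a>n$ off $\Delta_{12}$. If this direct comparison fails, we will instead use \cite[Proposition 3.10]{Ahn25} and prove minimality by computing the shadow through the iterated procedure. In that fallback, Claim 1 already supplies the vanishing of intermediate $h$-components.
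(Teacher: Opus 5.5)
There is a genuine gap, and it sits at the step you flag yourself as the main obstacle. The kernels built from the log-distance to $\Delta$ cannot be dominated by products of $\Delta_{12}$- and $\Delta_{23}$-kernels. Write your function as $\tfrac12\log(e^{2u}+e^{2v})$ with $u=\log|x_1-x_2|$ and $v=\log|x_2-x_3|$. Off $\Delta_{12}\cup\Delta_{23}$ one has
\begin{align*}
	dd^c\Big(\tfrac12\log\big(e^{2u}+e^{2v}\big)\Big)=\frac{e^{2u}\,dd^cu+e^{2v}\,dd^cv}{e^{2u}+e^{2v}}+\frac{2e^{2u+2v}}{\left(e^{2u}+e^{2v}\right)^2}\,d(u-v)\wedge d^c(u-v).
\end{align*}
Take points with $|x_2-x_3|=\epsilon|x_1-x_2|=\epsilon r$ and $\epsilon\ll 1$. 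There the last term, which is positive and not closed, has size $\simeq r^{-2}$ on the complex line of $x_3$-directions spanned by $x_3-x_2$. On that line $dd^cu$ and $dd^cv$ both vanish, and smooth forms stay bounded. So already for $i=1$ there is no bound by $dd^cu+dd^cv+(\text{smooth})$ near $\Delta$. Your inequality $w\ge\max(u,v)$ compares potentials, not the forms you need to integrate against the cut-offs $\varphi\chi_k$. The cohomological remark does not help either. The map $[z:w]\mapsto([z],[w])$ from $\P^{2n-1}$ to $\P^{n-1}\times\P^{n-1}$ is only rational, and fiberwise cohomology does not control localized integrals.

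The paper never uses the log-distance to $\Delta$. For existence it only needs local mass bounds of $(A_\lambda)_*T$ against some Kähler form on $E$. It takes the product-type form $dd^c|x|^2+\tfrac12dd^c\log(1+|z|^2)+\tfrac12dd^c\log(1+|w|^2)$, where $(z,w)$ are the paper's fiber coordinates, not your $w$. This form is Kähler on $E$, though not on $\overline E$. Its pull-back by $A_\lambda$ is exactly $\pi_2^*\omega_\euc+\pi_{12}^*dd^cu^\Tmc_{1/\lambda}+\pi_{23}^*dd^cv^\Tmc_{1/\lambda}$. The binomial expansion then directly produces the mixed kernels $\pi_{12}^*(dd^cu^\Tmc_{1/\lambda})^k\wedge\pi_{23}^*(dd^cv^\Tmc_{1/\lambda})^l\wedge\pi_2^*\omega_\euc^{2n-k-l}$ with $k+l\ge n$. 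Minimality then amounts to the limits having no mass on $\Delta$ for $k+l\le 2n-1$, and the shadow is computed with $\Kmc_{u,1}^n\wedge\Kmc_{v,1}^n$. With that choice, the rest of your toolkit (Lemma \ref{lem:DSHness}, testing against the local potential functionals of $S_1$, uniformity) is what the paper uses.

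Your vanishing sketch has a second, independent hole, at full $v$-degree ($b=n$ in your notation, $l=n$ in the paper's). There Condition (I) does not give absence of mass on $\Delta_{23}$: the current $\langle (dd^cv)^{n}\wedge\big(\pi_2^{23}\big)^*S_2\wedge\big(\pi_3^{23}\big)^*S_3\rangle_C$ is concentrated on $\Delta_{23}$. So your step ``near $\Delta_{23}$, Condition (I) gives no mass, as in Lemma \ref{lem:vanish}'' fails. For $l=n$ and $k\le n-1$ the vanishing must come entirely from the $S_1$-side.

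The paper treats this case separately. It bounds the $\chi^u_m$-term by $\frac{1}{\log m}$ times integrals involving $u^\Kmc_{1/m}$ and $u^\Kmc_{1/m^2}$. It shows these are uniformly bounded, using the decomposition of Lemma \ref{lem:DSHness} and the uniform convergence of Propositions \ref{prop:unif_conv_poten_sup} and \ref{prop:uniform_classical_prod}. For $l<n$ it dominates $\varphi$ by $\pi_2^*\phi$ and $\chi^u_m$ by $\chi_j^n$, and integrates $dd^cu^\Tmc_{1/\lambda}$ by parts. This reduces matters to the estimates behind \eqref{eq:Prop_easy_part}, whose uniformity lets the diagonal limit be replaced by the iterated one. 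The cases $k=0$ or $l=0$ are immediate.

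Finally, your fallback does not close the gap. Claim 1 concerns $\big(\pi_2^{23}\big)^*(S_1\wedge S_2)_{DS}\wedge\big(\pi_3^{23}\big)^*S_3$ along $\Delta_{23}\subset\Xf_{23}$, not $T$ along $\Delta\subset\Xf$. And \cite[Proposition 3.10]{Ahn25} yields uniqueness once minimality is known; it does not produce minimality itself.
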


\begin{proof} Recall that we assumed that $s_1+s_2+s_3=n$. As previously, it suffices to prove the following claims, which will be proved in the following subsections.\medskip
	
	\noindent{\bf Claim 3.} Let $\varphi$ be a positive smooth test function on $U_j^n\times U_j^n\times U_j^n$. For the function
	\begin{align*}
		I''_{k, l}(m, \lambda, \eta)=&\int \varphi T\wedge\pi_{12}^*\left(\chi_m^udd^cu_{1/\lambda}^\Tmc\wedge \Kmc_{u,\eta}^{k-1}\right)\wedge\pi_{23}^*\left(\chi^v_mdd^cv_{1/\lambda}^\Tmc\wedge \Kmc_{v,\eta}^{l-1}\right)\wedge \pi_2^*\omega_\euc^{2n-k-l},
	\end{align*}
	the limit $\displaystyle \lim_{\lambda\to\infty}\lim_{\eta\to 0}I''_{k,l}(m, \lambda, \eta)$ exists for $1\ll m$, $0<k, l$ and $k+l=n, \ldots, 2n$, and we have $\displaystyle \lim_{m\to\infty}\lim_{\lambda\to\infty}\lim_{\eta\to 0}I''_{k,l}(m, \lambda, \eta)=0$ for $0<k, l$ and $k+l=n, \ldots, 2n-1$.\medskip
	
	\noindent{\bf Claim 4. }The shadow of the tangent currents of $T$ equals $\big(S_1\wedge \big(S_2\wedge S_3\big)_{DS}\big)_{DS}$.\medskip
	
	Assuming the assumptions, the proof works as follows:
	\medskip
	
	First, for the existence of the tangent currents, it suffices to show that $\left((A_\lambda)_*T\right)_{|\lambda|\gg 1}$ has locally uniformly bounded mass. Since $\omega$ is K\"ahler on $E$ (but not on $\overline{E}$), it suffices to show that $\left((A_\lambda)_*T\wedge \omega^{2n}\right)_{|\lambda|\gg 1}$ is locally uniformly bounded on $\pi^{-1}(U_j^n)$.\medskip
	
	For a positive smooth test function $\psi$ on $E$, by the change of coordinates in the fiber direction, we have
	{\begin{align*}
			&\int_E\psi(A_\lambda)_*T\wedge \omega^{2n}=\int_{U_j^0\times U_j^0\times U_j^0}(\psi\circ A_\lambda) T\\
			&\quad\quad\quad\wedge\left(dd^c|x|^2 + \frac{1}{2}dd^c\log(1/|\lambda|^2+|z|^2)+\frac{1}{2}dd^c\log(1/|\lambda|^2+|w|^2)\right)^{2n}\\
			&=\int_{U_j^n\times U_j^n\times U_j^n}(\psi\circ A_\lambda)T\wedge\left(\pi_2^*\omega_\euc + \pi_{12}^*dd^cu_{1/\lambda}^\Tmc+\pi_{23}^*dd^cv_{1/\lambda}^\Tmc\right)^{2n}\\
			&=\int_{U_j^n\times U_j^n\times U_j^n}\sum_{\substack{0\le k\le n\\0\le l\le n}}c_{kl}\Big[(\psi\circ A_\lambda)T\wedge\pi_{12}^*\left(dd^cu_{1/\lambda}^\Tmc\right)^k\wedge\pi_{23}^*\left(dd^cv_{1/\lambda}^\Tmc\right)^l\wedge \pi_2^*\omega_\euc^{2n-k-l}\Big].
	\end{align*}}For a bidegree reason, we only need to check the case $n\le k+l$. By \cite[Lemma 2.16 and Theorem 2.15]{Ahn25}, it suffices to show that the mass of the current
	\begin{align*}
		\varphi T\wedge\pi_{12}^*\left(dd^cu_{1/\lambda}^\Tmc\wedge \Kmc_{u,\eta}^{k-1}\right)\wedge\pi_{23}^*\left(dd^cv_{1/\lambda}^\Tmc\wedge \Kmc_{v,\eta}^{l-1}\right)\wedge \pi_2^*\omega_\euc^{2n-k-l}
	\end{align*}
	is uniformly bounded, where $|\eta|\ll 1$, $k+l=n, \ldots, 2n$, and $\varphi$ is a positive smooth test function on $U_j^n \times U_j^n \times U_j^n$. Then, for the minimality of the $h$-dimension, we show that the limit currents of the family has no mass on $\Delta$ when $k+l=n, \ldots, 2n-1$.\medskip
	
	When $k=0$ or $l=0$, the local uniform boundedness of mass is obvious as implied by the continuity assumption on $S_1$ and Condition (I) for $S_2$ and $S_3$. So, we assume that both $k, l >0$. Together with Claims 3 and 4, the proof goes in the same way as in the proof of Proposition \ref{prop:1st_part}.
\end{proof}

\subsection{Proof of Claim 3} We consider two cases: $l<n$ and $l=n$.
\subsubsection{$l<n$} Let $\phi$ be a positive smooth test function on $U_j^n$ such that $\pi_2^*\phi\ge \varphi$. Then, for all sufficiently large $m\in \N$, we have
{\begin{align}
		\notag	&\int_{\Xf} \varphi T\wedge\pi_{12}^*\left(\chi_m^udd^cu_{1/\lambda}^\Tmc\wedge \Kmc_{u,\eta}^{k-1}\right)\wedge\pi_{23}^*\left(\chi^v_mdd^cv_{1/\lambda}^\Tmc\wedge \Kmc_{v,\eta}^{l-1}\right)\wedge \pi_2^*\omega_\euc^{2n-k-l}\\
		\notag	&\le \int_\Xf \pi_1^*S_1\wedge \pi_2^*\left(\phi S_2\wedge \omega_\euc^{2n-k-l}\right)\wedge \pi_3^*S_3\wedge\pi_{12}^*\left(\chi_m^udd^cu_{1/\lambda}^\Tmc\wedge \Kmc_{u,\eta}^{k-1}\right)\wedge\pi_{23}^*\left(\chi^v_mdd^cv_{1/\lambda}^\Tmc\wedge \Kmc_{v,\eta}^{l-1}\right)\\
		\notag	&=\int_{X_2}  \left(\phi S_2\wedge \omega_\euc^{2n-k-l}\right)\wedge (\pi_2)_*\left(\pi_1^*S_1\wedge\pi_{12}^*\left(\chi_m^udd^cu_{1/\lambda}^\Tmc\wedge \Kmc_{u,\eta}^{k-1}\right)\right)\\
		\notag	&\quad\quad\quad\quad\quad\quad\quad\quad\quad\quad\quad\quad\quad\quad\quad\wedge(\pi_2)_*\left(\pi_3^*S_3\wedge\pi_{23}^*\left(\chi^v_mdd^cv_{1/\lambda}^\Tmc\wedge \Kmc_{v,\eta}^{l-1}\right)\right)\\
		\notag	&=\int_{X_2}  \left(\phi S_2\wedge \omega_\euc^{2n-k-l}\right)\wedge (\pi^{12}_2)_*\left(\left(\pi_1^{12}\right)^*S_1\wedge\left(\chi_m^udd^cu_{1/\lambda}^\Tmc\wedge \Kmc_{u,\eta}^{k-1}\right)\right)\\
		\notag	&\quad\quad\quad\quad\quad\quad\quad\quad\quad\quad\quad\quad\quad\wedge\left(\pi_2^{23}\right)_*\left(\left(\pi_3^{23}\right)^*S_3\wedge\left(\chi^v_mdd^cv_{1/\lambda}^\Tmc\wedge \Kmc_{v,\eta}^{l-1}\right)\right)\\
		\notag	&=\int_{\Xf_{12}} \chi_m^udd^cu_{1/\lambda}^\Tmc\wedge \Kmc_{u,\eta}^{k-1}\wedge \left(\pi_1^{12}\right)^*S_1\wedge (\pi^{12}_2)^* \left(\pi_2^{23}\right)_*\Big[\left(\chi^v_m\left(\pi_2^{23}\right)^*\phi\right)\\
		\notag	&\quad\quad\quad\quad\quad\quad\quad\quad\quad\quad\left(\pi_2^{23}\right)^*\left( S_2\wedge \omega_\euc^{2n-k-l}\right)\wedge\left(\pi_3^{23}\right)^*S_3\wedge\left(dd^cv_{1/\lambda}^\Tmc\wedge \Kmc_{v,\eta}^{l-1}\right)\Big]\\
		\notag	&\le\int_{\Xf_{12}} \chi_j^n dd^cu_{1/\lambda}^\Tmc\wedge \Kmc_{u,\eta}^{k-1}\wedge \left(\pi_1^{12}\right)^*S_1\wedge (\pi^{12}_2)^* \left(\pi_2^{23}\right)_*\Big[\left(\chi^v_m\left(\pi_2^{23}\right)^*\phi\right)\\
		\notag	&\quad\quad\quad\quad\quad\quad\quad\quad\quad\quad\left(\pi_2^{23}\right)^*\left( S_2\wedge \omega_\euc^{2n-k-l}\right)\wedge\left(\pi_3^{23}\right)^*S_3\wedge\left(dd^cv_{1/\lambda}^\Tmc\wedge \Kmc_{v,\eta}^{l-1}\right)\Big]	\\
		\label{eq:claim4}	&=\int_{\Xf_{12}}u_{1/\lambda}^\Tmc dd^c \Big[\chi_j^n  \Kmc_{u,\eta}^{k-1}\wedge \left(\pi_1^{12}\right)^*S_1\wedge (\pi^{12}_2)^* \left(\pi_2^{23}\right)_*\Big[\left(\chi^v_m\left(\pi_2^{23}\right)^*\phi\right)\\
		\notag	&\quad\quad\quad\quad\quad\quad\quad\quad\quad\quad\left(\pi_2^{23}\right)^*\left( S_2\wedge \omega_\euc^{2n-k-l}\right)\wedge\left(\pi_3^{23}\right)^*S_3\wedge\left(dd^cv_{1/\lambda}^\Tmc\wedge \Kmc_{v,\eta}^{l-1}\right)\Big]\Big]
\end{align}}
All the integrals in \eqref{eq:claim4} other than
\begin{align}
	\label{eq:417}&\int_{\Xf_{12}}\chi_j^n u_{1/\lambda}^\Tmc  \wedge \Kmc_{u,\eta}^{k-1}\wedge \left(\pi_1^{12}\right)^*S_1\wedge (\pi^{12}_2)^* \left(\pi_2^{23}\right)_*\Big[dd^c\left(\chi^v_m\left(\pi_2^{23}\right)^*\phi\right)\\
	\notag	&\quad\quad\quad\quad\quad\quad\left(\pi_2^{23}\right)^*\left( S_2\wedge \omega_\euc^{2n-k-l}\right)\wedge\left(\pi_3^{23}\right)^*S_3\wedge\left(dd^cv_{1/\lambda}^\Tmc\wedge \Kmc_{v,\eta}^{l-1}\right)\Big]
\end{align}
contain a derivative of $\chi_j^n$ and so the integral can be expressed as an integral of the form $ \left(\pi_2^{23}\right)^* S_2 \wedge \left(\pi_3^{23}\right)^* S_3 \wedge \left(dd^cv_{1/\lambda}^\Tmc\wedge \Kmc_{v,\eta}^{l-1}\right) \wedge$ (smooth form) on $\Xf_{23}$. The existence of its limit comes from Condition (I) for $S_2$ and $S_3$. Also, Condition (I) for $S_2$ and $S_3$ implies no mass concentration of mass on $\Delta_{23}$ for its limit currents. For \eqref{eq:417}, we apply the same idea as \eqref{eq:Prop_easy_part} in the proof of Proposition \ref{prop:1st_part}. The estimates in the proof of Proposition \ref{prop:1st_part} are uniform and so, the diagonal type limit, corresponding to the limit of \eqref{eq:417}, exists and is the same as iterated limit, which corresponds to \eqref{eq:Prop_easy_part}. Namely, the above limit equals $0$. The existence part also comes in the same way.

\subsubsection{$l=n$} First, we consider the uniform boundedness of mass. We assume $k\le n$. Let $\phi$ be a positive smooth test function on $U_j^n$ such that $\pi_2^*\phi\ge \varphi$. Then, for all sufficiently large $m\in \N$, we have
{\begin{align}
		\notag	&\int_{\Xf} \varphi T\wedge\pi_{12}^*\left(\chi_m^udd^cu_{1/\lambda}^\Tmc\wedge \Kmc_{u,\eta}^{k-1}\right)\wedge\pi_{23}^*\left(\chi^v_mdd^cv_{1/\lambda}^\Tmc\wedge \Kmc_{v,\eta}^{n-1}\right)\wedge \pi_2^*\omega_\euc^{n-k}\\
		\notag	&\le \int_\Xf \pi_1^*S_1\wedge \pi_2^*\left(\phi S_2\wedge \omega_\euc^{n-k}\right)\wedge \pi_3^*S_3\wedge\pi_{12}^*\left(\chi_m^udd^cu_{1/\lambda}^\Tmc\wedge \Kmc_{u,\eta}^{k-1}\right)\wedge\pi_{23}^*\left(\chi^v_mdd^cv_{1/\lambda}^\Tmc\wedge \Kmc_{v,\eta}^{n-1}\right)\\
		\notag	&=\int_{X_2}  \left(\phi S_2\wedge \omega_\euc^{n-k}\right)\wedge (\pi_2)_*\left(\pi_1^*S_1\wedge\pi_{12}^*\left(\chi_m^udd^cu_{1/\lambda}^\Tmc\wedge \Kmc_{u,\eta}^{k-1}\right)\right)\\
		\notag	&\quad\quad\quad\quad\quad\quad\quad\quad\quad\quad\quad\quad\quad\quad\quad\wedge(\pi_2)_*\left(\pi_3^*S_3\wedge\pi_{23}^*\left(\chi^v_mdd^cv_{1/\lambda}^\Tmc\wedge \Kmc_{v,\eta}^{n-1}\right)\right)\\
		\notag	&=\int_{X_2}  \left(\phi S_2\wedge \omega_\euc^{n-k}\right)\wedge (\pi^{12}_2)_*\left(\left(\pi_1^{12}\right)^*S_1\wedge\left(\chi_m^udd^cu_{1/\lambda}^\Tmc\wedge \Kmc_{u,\eta}^{k-1}\right)\right)\\
		\notag	&\quad\quad\quad\quad\quad\quad\quad\quad\quad\quad\quad\quad\quad\wedge\left(\pi_2^{23}\right)_*\left(\left(\pi_3^{23}\right)^*S_3\wedge\left(\chi^v_mdd^cv_{1/\lambda}^\Tmc\wedge \Kmc_{v,\eta}^{n-1}\right)\right)\\
		\notag	&=\int_{\Xf_{12}} \chi_m^udd^cu_{1/\lambda}^\Tmc\wedge \Kmc_{u,\eta}^{k-1}\wedge \left(\pi_1^{12}\right)^*S_1\wedge (\pi^{12}_2)^* \left(\pi_2^{23}\right)_*\Big[\left(\chi^v_m\left(\pi_2^{23}\right)^*\phi\right)\\
		\notag	&\quad\quad\quad\quad\quad\quad\quad\quad\quad\quad\left(\pi_2^{23}\right)^*\left( S_2\wedge \omega_\euc^{n-k}\right)\wedge\left(\pi_3^{23}\right)^*S_3\wedge\left(dd^cv_{1/\lambda}^\Tmc\wedge \Kmc_{v,\eta}^{n-1}\right)\Big]\\
		\notag	&\le\int_{\Xf_{12}} \chi_m^u dd^cu_{1/\lambda}^\Tmc\wedge \Kmc_{u,\eta}^{k-1}\wedge \left(\pi_1^{12}\right)^*S_1\wedge (\pi^{12}_2)^* \left(\pi_2^{23}\right)_*\Big[\left(\chi^j_n\left(\pi_2^{23}\right)^*\phi\right)\\
		\notag	&\quad\quad\quad\quad\quad\quad\quad\quad\quad\quad\left(\pi_2^{23}\right)^*\left( S_2\wedge \omega_\euc^{n-k}\right)\wedge\left(\pi_3^{23}\right)^*S_3\wedge\left(dd^cv_{1/\lambda}^\Tmc\wedge \Kmc_{v,\eta}^{n-1}\right)\Big]	\\
		\label{eq:claim5}	&=\int_{\Xf_{12}}u_{1/\lambda}^\Tmc dd^c \Big[\chi_m^u \Kmc_{u,\eta}^{k-1}\wedge \left(\pi_1^{12}\right)^*S_1\wedge (\pi^{12}_2)^* \left(\pi_2^{23}\right)_*\Big[\left(\chi^n_j\left(\pi_2^{23}\right)^*\phi\right)\\
		\notag	&\quad\quad\quad\quad\quad\quad\quad\quad\quad\quad\left(\pi_2^{23}\right)^*\left( S_2\wedge \omega_\euc^{n-k}\right)\wedge\left(\pi_3^{23}\right)^*S_3\wedge\left(dd^cv_{1/\lambda}^\Tmc\wedge \Kmc_{v,\eta}^{n-1}\right)\Big]\Big]
\end{align}}
All the integrals in \eqref{eq:claim5} other than
\begin{align}
	&\int_{\Xf_{12}}\chi_m^u u_{1/\lambda}^\Tmc \Kmc_{u,\eta}^{k-1}\wedge \left(\pi_1^{12}\right)^*S_1\wedge (\pi^{12}_2)^* \left(\pi_2^{23}\right)_*\Big[\left(\chi^n_j\left(\pi_2^{23}\right)^*(dd^c\phi)\right)\\
	\notag	&\quad\quad\quad\quad\quad\quad\quad\left(\pi_2^{23}\right)^*\left( S_2\wedge \omega_\euc^{n-k}\right)\wedge\left(\pi_3^{23}\right)^*S_3\wedge\left(dd^cv_{1/\lambda}^\Tmc\wedge \Kmc_{v,\eta}^{n-1}\right)\Big]
\end{align} 
contain either a derivative of $\chi_m^v$ or that of $\chi_j^n$. Then, as previously, Condition (I) for $S_1$ and $S_2$ (induced from the continuity assumption on $S_1$) or Condition (I) for $S_2$ and $S_3$ shows the uniform boundedness of the mass independently of $\eta$ and $\lambda$. The integral can be bounded by
\begin{align}
	&\int_{\Xf_{12}}\chi_j^1u_{1/\lambda}^\Tmc \Kmc_{u,\eta}^{k-1}\wedge \left(\pi_1^{12}\right)^*S_1\wedge (\pi^{12}_2)^*\Big[\left( S_2\wedge \omega_\euc^{n-k}\right)\\
	\notag	&\quad\quad\quad\quad\quad\quad\quad\quad\quad\quad\wedge\left(\pi_2^{23}\right)_*\Big[\left(\pi_3^{23}\right)^*S_3\wedge\left(dd^cv_{1/\lambda}^\Tmc\wedge \Kmc_{v,\eta}^{n-1}\right)\wedge \omega_{23}\Big]\Big]\\
	\notag	&=\int_{\Xf_{12}}\chi_j^1u_{1/\lambda}^\Tmc \Kmc_{u,\eta}^{k-1}\wedge \left(\pi_1^{12}\right)^*S_1\wedge (\pi^{12}_2)^*\Big[\left( S_2\wedge \omega_\euc^{n-k}\right)\wedge\left(\Tc_{1/\lambda}\Kmc_{\theta'}\right)^{n+1, n}(S_3)\Big]
\end{align} 
up to a multiplicative constant. By Lemma \ref{lem:DSHness}, it is possible to write $(\Tc_{1/\lambda}\Kc_{\theta'})^{n+1, n}(S_3)$ as the sum of a positive closed current and a smooth form in $U_j^1$ as follows:
\begin{align*}
	[(\Tc_{1/\lambda}\Kc_{\theta'})^{n+1, n}(S_3)-A(\theta_0, 1/\lambda, \theta')] + A(\theta_0, 1/\lambda, \theta'),
\end{align*}
where $A(\theta_0, 1/\lambda, \theta'):=\left(\Tc_{1/\lambda}\Kc_{\theta'}\right)_{\theta_0}^{n+1,n}(S_3)+c'_{TK}\|S_3\||\theta_0|^{-2n}\omega_\euc^{s_3+1}$.

The positive closed current $S_2\wedge \omega_{euc}^{n-k}\wedge [(\Tc_{1/\lambda}\Kc_{\theta'})^{n+1, n}(S_3)-A(\theta_0, 1/\lambda, \theta')]$ in $U_j^1$ has uniformly bounded mass as it converges according to Condition (I) for $S_2$ and $S_3$. The uniform convergence as in Propositions \ref{prop:unif_conv_poten_sup} and \ref{prop:uniform_classical_prod} implies the uniform boundedness of the integral
{\begin{align*}
		&\int_{\Xf_{12}}\chi_j^1u_{1/\lambda}^\Tmc \Kmc_{u,\eta}^{k-1}\wedge \left(\pi_1^{12}\right)^*S_1\wedge (\pi^{12}_2)^*\Big[S_2\wedge \omega_{euc}^{n-k}\wedge [(\Tc_{1/\lambda}\Kc_{\theta'})^{n+1, n}(S_3)-A(\theta_0, 1/\lambda, \theta')]\Big].
\end{align*}}
Since $A(\theta_0, 1/\lambda, \theta')$ is smooth, for the other integral
\begin{align*}
	\int_{\Xf_{12}}\chi_j^1u_{1/\lambda}^\Tmc \Kmc_{u,\eta}^{k-1}\wedge \left(\pi_1^{12}\right)^*S_1\wedge (\pi^{12}_2)^*\Big[S_2\wedge \omega_{euc}^{n-k}\wedge A(\theta_0, 1/\lambda, \theta')\Big],
\end{align*}
the uniform boundedness is clear from Condition (I) for $S_1$ and $S_2$ induced from the continuity assumption on $S_1$.\medskip

For limit currents having no mass on $\Delta$, we assume that $k\le n-1$. By passing to a subsequence, we may assume the sequence converges as $\eta\to 0$ and $\lambda\to\infty$. We consider
\begin{align*}
	&\int_{\Xf_{12}} \chi_m^u dd^cu_{1/\lambda}^\Tmc\wedge \Kmc_{u,\eta}^{k-1}\wedge \left(\pi_1^{12}\right)^*S_1\wedge (\pi^{12}_2)^* \left(\pi_2^{23}\right)_*\Big[\left(\chi^n_j\left(\pi_2^{23}\right)^*\phi\right)\\
	\notag	&\quad\quad\quad\quad\quad\quad\quad\left(\pi_2^{23}\right)^*\left( S_2\wedge \omega_\euc^{n-k}\right)\wedge\left(\pi_3^{23}\right)^*S_3\wedge\left(dd^cv_{1/\lambda}^\Tmc\wedge \Kmc_{v,\eta}^{n-1}\right)\Big].
\end{align*}
Notice that it can be bounded by a linear combination of
\begin{align*}
	&\frac{1}{\log m}\int_{\Xf_{12}} u^\Kmc_{1/m}\wedge dd^cu_{1/\lambda}^\Tmc\wedge \Kmc_{u,\eta}^{k-1}\wedge \left(\pi_1^{12}\right)^*S_1\wedge (\pi^{12}_2)^* \left(\pi_2^{23}\right)_*\Big[\chi_j^1\\
	\notag	&\quad\quad\quad\quad\quad\quad\quad\left(\pi_2^{23}\right)^*\left( S_2\wedge \omega_\euc^{n-k}\right)\wedge\left(\pi_3^{23}\right)^*S_3\wedge\left(dd^cv_{1/\lambda}^\Tmc\wedge \Kmc_{v,\eta}^{n-1}\right)\Big].
\end{align*}
and another integral with $1/m$ replaced by $1/m^2$. Since $k\le n-1$, the same argument as previously implies the integral is uniformly bounded. Then, as $\log m\to\infty$ as $m\to\infty$, we get the desired convergence to $0$.
%

\subsection{Proof of Claim 4}
The shadow of the tangent currents of $T$ equals $\big(S_1\wedge \big(S_2\wedge S_3\big)_{DS}\big)_{DS}$.\medskip

Observe that on $E$, the form $\Kmc_{u, 1}^n$ and $\Kmc_{v, 1}^n$ are smooth $(n, n)$-forms with supports near the analytic subsets $z=0$ and $w=0$, respectively. Hence, $\Kmc_{u, 1}^n\wedge\Kmc_{v, 1}^n$ is a smooth form that has compact support in every fiber of $E$ and therefore, extends to $\overline{E}$. In particular, it is cohomologous to the linear space of $\P^{2n}$ on each fiber. Hence, for a smooth test form $\phi$ on $U_j^n$, the shadow of the tangent current of $T$ is computed as
\begin{align*}
	&\lim_{\lambda\to\infty}(A_\lambda)_*(\pi_1^*S_1\wedge\pi_2^*S_2\wedge\pi_3^*S_3)\wedge \pi_{12}^*\Kmc_{u, 1}^n\wedge\pi_{23}^*\Kmc_{v, 1}^n\wedge \pi_2^*\phi\\
	&=\lim_{\lambda\to\infty}(\pi_1^*S_1\wedge\pi_2^*S_2\wedge\pi_3^*S_3)\wedge \pi_{12}^*\Kmc_{u, 1/\lambda}^n\wedge\pi_{23}^*\Kmc_{v, 1/\lambda}^n\wedge \pi_2^*\phi\\
	&=\lim_{\lambda\to\infty}(\pi_2^*(\phi S_2))\wedge \pi_1^*S_1\wedge\pi_{12}^*\Kmc_{u, 1/\lambda}^n\wedge\pi_3^*S_3\wedge\pi_{23}^*\Kmc_{v, 1/\lambda}^n\\
	&=\lim_{\lambda\to\infty}\phi S_2\wedge (\pi_2)_*\left(\pi_1^*S_1\wedge\pi_{12}^*\Kmc_{u, 1/\lambda}^n\right)\wedge(\pi_2)_*\left(\pi_3^*S_3\wedge\pi_{23}^*\Kmc_{v, 1/\lambda}^n\right)\\
	&=\lim_{\lambda\to\infty}\phi S_2\wedge \left(\pi_2^{12}\right)_*\left(\left(\pi_1^{12}\right)^*S_1\wedge\Kmc_{u, 1/\lambda}^n\right)\wedge\left(\pi_2^{23}\right)_*\left(\left(\pi_3^{23}\right)^*S_3\wedge\Kmc_{v, 1/\lambda}^n\right)\\
	&=\lim_{\lambda\to\infty}\phi S_2\wedge \left(\pi_2^{12}\right)_*\left(\left(\pi_1^{12}\right)^*S_1\wedge\Kmc_{u, 1/\lambda}^n\right)\wedge\left(\pi_2^{23}\right)_*\left(\left(\pi_3^{23}\right)^*S_3\wedge\Kmc_{v, 1/\lambda}^n\right)\\
	&=\lim_{\lambda\to\infty}\chi_j^n\left(\pi_1^{12}\right)^*S_1\wedge\Kmc_{u, 1/\lambda}^n\wedge \left(\pi_2^{12}\right)^*\Big[\phi S_2\wedge \left(\pi_2^{23}\right)_*\left(\left(\pi_3^{23}\right)^*S_3\wedge\Kmc_{v, 1/\lambda}^n\right)\Big]\\
	&=\lim_{\lambda\to\infty}u^\Kmc_{1/\lambda}dd^c\Big[\chi_j^n(dd^cu)^{n-1}\wedge\left(\pi_1^{12}\right)^*S_1\wedge \left(\pi_2^{12}\right)^*\Big[\phi S_2\wedge \left(\pi_2^{23}\right)_*\left(\left(\pi_3^{23}\right)^*S_3\wedge\Kmc_{v, 1/\lambda}^n\right)\Big]\Big].
\end{align*} 

We consider the following double limit:
\begin{align*}
	&\lim_{\substack{\lambda\to\infty\\t\to\infty}}u^\Kmc_{1/\lambda}dd^c\Big[\chi_j^n(dd^cu)^{n-1}\wedge\left(\pi_1^{12}\right)^*S_1\wedge \left(\pi_2^{12}\right)^*\Big[\phi S_2\wedge \left(\pi_2^{23}\right)_*\left(\left(\pi_3^{23}\right)^*S_3\wedge\Kmc_{v, 1/t}^n\right)\Big]\Big]
\end{align*}
All the integrals in the above other than
{\begin{align*}
		\chi_j^nu^\Kmc_{1/\lambda}(dd^cu)^{n-1}\wedge\left(\pi_1^{12}\right)^*S_1\wedge \left(\pi_2^{12}\right)^*\Big[dd^c\phi\wedge S_2\wedge \left(\pi_2^{23}\right)_*\left(\left(\pi_3^{23}\right)^*S_3\wedge\Kmc_{v, 1/t}^n\right)\Big]
\end{align*}} 
contain either a derivative of $\chi_j^n$. Then, as previously, Condition (I) for $S_2$ and $S_3$ shows the uniform convergence as $\lambda\to\infty$ with respect to $t$. In the above integral, the current $dd^c\phi\wedge S_2\wedge \left(\pi_2^{23}\right)_*\left(\left(\pi_3^{23}\right)^*S_3\wedge\Kmc_{v, 1/\lambda}^n\right)$ is of uniformly bounded $*$-norm on $U_j^0$. Proposition \ref{prop:unif_conv_poten_sup} implies the uniform convergence with respect to $t$. Hence, the diagonal limit exists and is the same as iterated limit. \hfill $\qed$ 

\section{Positive closed currents with continuous superpotentials on compact K\"ahler manifolds}\label{sec:cptK}
We consider a compact K\"ahler manifold $(X, \omega_X)$ of dimension $n$. In this section, $\Cc_p(X)$ denotes the set of positive closed $(p, p)$-currents of unit mass and $\widetilde{\Cc_p(X)}$ denotes its subset of smooth ones. We take $\left(\left(U_j^i, \xi_j^i\right)\right)_{i=0, 1, \ldots, n, j\in J}$ as localizing data for $X$. We may assume that $J$ is finite. We use the following class of currents: $\widetilde{K}:=\widetilde{\Cc_{n-p+1}(X)}$ and $K:=\overline{\widetilde{\Cc_{n-p+1}(X)}}$.\medskip

The key point is that the continuity of superpotentials can be described in terms of integration as in \cite{DNV} and localization works well with integration.\medskip

The next theorem proves that continuous superpotentials are a special case of continuous local potential functionals.
\begin{theorem}
	Let $(X, \omega_X)$ be a compact K\"ahler manifold of dimension $n$.  Let $S\in\Cc_p(X)$. Then, $S$ admits continuous superpotentials if and only if $S$ admits continuous local potential functionals on $K$, where $K$ is the closure of the set of smooth positive closed $(n-p+1, n-p+1)$-currents of unit mass.
\end{theorem}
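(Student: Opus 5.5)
Both notions will be reduced to the vanishing of integrals near the diagonal. On the superpotential side I use the characterization of Dinh--Nguy\^en--Vu \cite{DNV}. On the local side I use Theorem \ref{thm:main_char}. Concretely, \cite{DNV} shows that $S$ has continuous superpotentials if and only if
$$\sup_{R\in\widetilde K}\int_{\{{\rm dist}(x,y)<\delta\}}-\log{\rm dist}(x,y)\,(dd^c\log{\rm dist})^{i-1}\wedge\pi_1^*S\wedge\pi_2^*R\wedge\omega^{n-i}\to 0\quad(\delta\to0)$$
for $i=1,\ldots,n$, suitably interpreted through a blow-up of $\Delta$ in $X\times X$ with a quasi-potential of the exceptional divisor. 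By Theorem \ref{thm:main_char}, continuity of the local potential functionals on $K$ is equivalent to $\nu_{S,j}^i(\delta)\to0$ for every $i$ and $j$. The task is therefore to compare the global function of \cite{DNV} with the finitely many local functions $\nu_{S,j}^i$, $j\in J$.

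\emph{Direction 1 (superpotentials $\Rightarrow$ local).} Fix $j$ and $R\in\widetilde K$ with $\|R\|\le1$. In the chart $\Uf_j^0$, the global quasi-psh function $\log{\rm dist}$ (or the exceptional-divisor potential $\varphi$ from \cite{DNV}) differs from $u=\log|x-y|$ by a bounded smooth function near $\Delta\cap{\rm supp}\,\chi_j^i$. Consequently $dd^cu$ and $dd^c\varphi$ differ by a smooth form bounded by $C\omega$. I expand $(dd^cu)^{i-1}=(dd^c\varphi+\beta)^{i-1}$ binomially and bound each cross term: it is a lower-order term of the form $-u\,(dd^c\varphi)^{l}\wedge\pi_1^*S\wedge\pi_2^*R\wedge\omega^{n-l-1}$ with $l<i-1$. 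Positivity of the integrand allows $0\le\chi_j^i\le1$ to be dropped. This gives $\nu^i_{S,j}(\delta)\le C\sum_{l\le i}\nu^{l}_{\rm glob}(C\delta)$, which tends to $0$. Since $K$ consists of globally closed currents, every $R\in\widetilde K$ lies in $\widetilde\Cc_{n-p+1}(U_j^1;U_j^0)$ with $*$-norm controlled by its global mass. Hence $K$ is an admissible domain for each $j$, the suprema match, and Theorem \ref{thm:main_char} applies.

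\emph{Direction 2 (local $\Rightarrow$ superpotentials).} I cover a neighborhood of $\Delta$ by the finitely many $\Uf_j^n$; this is possible because $J$ is finite and $\{\chi_j^i\equiv1\}\supset\overline{\Uf_j^i}$. For $\delta$ small, $\Delta_\delta\subset\bigcup_j\Uf^n_j$. The global integral over $\Delta_\delta$ is then bounded by $\sum_j$ of integrals with weight $\chi_j^i$, using a partition of unity subordinate to $(\Uf_j^n)$ and positivity. I replace $\log{\rm dist}$ by $u$ through the same binomial comparison, which yields $\nu_{\rm glob}^i(\delta)\le C\sum_{j}\sum_{l\le i}\nu_{S,j}^l(C\delta)\to0$. \cite{DNV} then gives continuity of the superpotentials.

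\emph{Main obstacle.} I expect the hardest step to be the comparison of $(dd^cu)^{i-1}$ with its global counterpart inside these singular products. The cross terms contain wedge products $\langle(dd^cu)^l\wedge\pi_1^*S\wedge\pi_2^*R\rangle_C$, and these must be shown to be well defined. They must also carry no mass on $\Delta$, uniformly in $R$, so that the estimate passes through the inductive structure. My first approach is to work only with smooth $R\in\widetilde K$, where $\pi_1^*S\wedge\pi_2^*R$ is defined classically off $\Delta$. I would then induct on $i$, using the already-established vanishing for lower $l$ to control the cross terms, exactly as the collective induction in Proposition \ref{prop:char_conti} does. The precise normalization of the \cite{DNV} criterion — blow-up versus $\log{\rm dist}$, and whether $\omega_X$ or $\omega_{\rm euc}$ is used — only changes constants, by the comparability of K\"ahler forms on the compact set $\bigcup_j\overline{\Uf_j^0}$.
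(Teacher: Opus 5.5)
There is a genuine gap at the starting point. \cite[Proposition 2.7]{DNV} is not a family of conditions indexed by $i$ involving powers $(dd^c\log{\rm dist})^{i-1}$. It is the single condition $\lim_{\delta\to0}\nu^X_S(\delta)=0$, where
\begin{align*}
\nu^X_S(\delta)=\sup_{R\in\widetilde{K}}\int_{\widehat{\Delta}_\delta}-\widehat{u}\,\widehat{\Omega}^{n-1}\wedge\Pi_1^*S\wedge\Pi_2^*R .
\end{align*}
This is computed on the blow-up $\pi:\Xfh\to X\times X$ along $\Delta$, with $\widehat{\Omega}$ K\"ahler on $\Xfh$ and $dd^c\widehat{u}=[\widehat{\Delta}]-\widehat{\beta}$. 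No powers of a singular $dd^c$ appear there. The $i$-indexed global criterion you attribute to \cite{DNV} is essentially a globalized form of what has to be proved, and your remaining steps only localize it.

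The missing idea is to produce the powers $(dd^cu)^{i-1}$ out of $\widehat{\Omega}^{n-1}$. In blow-up charts $(x,v)$ over a sector near $\Delta$, off $\widehat{\Delta}$ one has $\pi^*dd^cu=\frac12 dd^c\log\big(1+\sum_{l\ge2}|v_l|^2\big)$. This is a Fubini--Study form along the fibres of $\widehat{\Delta}\to\Delta$, which are exactly the directions where $\pi^*\omega$ degenerates. Moreover, $\widehat{\Omega}$ is comparable to $\pi^*\omega+\pi^*dd^cu$ near $\widehat{\Delta}$, hence
\begin{align*}
c^{-1}\,\pi^*\Big(\sum_{k=0}^{n-1}(dd^cu)^k\wedge\omega^{n-1-k}\Big)\le\widehat{\Omega}^{n-1}\le c\,\pi^*\Big(\sum_{k=0}^{n-1}(dd^cu)^k\wedge\omega^{n-1-k}\Big).
\end{align*}
Add to this $\dist(\cdot,\widehat{\Delta})\simeq|x-y|$, the boundedness of $\widehat{u}-\pi^*u$, positivity, and the finite cover by the $\Uf^n_j$. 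Then, up to constants and a rescaling of $\delta$, the DNV integral over $\widehat{\Delta}_\delta$ dominates each local integral in $\nu^i_{S,j}$ and is dominated by their sum over $i$ and $j$. That is the whole argument, and it is also why the equivalence holds only collectively in $i$.

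Your binomial comparison cannot substitute for this, because its premise fails. The global potential ($\widehat{u}$, or its push-forward) differs from $u$ by a bounded function that is smooth on $\Xfh$ but in general not on $X\times X$, since it depends on the direction $[x-y]$ as one approaches $\Delta$. Its $dd^c$ is therefore controlled only by $C\widehat{\Omega}\simeq C(\pi^*\omega+\pi^*dd^cu)$, not by $C\omega$, so the cross terms are not of lower order in $dd^cu$.

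Conversely, the obstacle you anticipate does not arise. You worried about well-definedness of $\langle(dd^cu)^l\wedge\pi_1^*S\wedge\pi_2^*R\rangle_C$ and planned an induction on $i$. But both $\nu^X_S$ and $\nu^i_{S,j}$ are suprema over smooth $R$. The integrals can be computed on $\Xfh\setminus\widehat{\Delta}\cong X\times X\setminus\Delta$, where all forms multiplying $\pi_1^*S\wedge\pi_2^*R$ are smooth. The comparison is then just a pointwise inequality between positive forms.
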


\begin{proof}
	We recall the context for \cite[Proposition 2.7]{DNV}. Let $\Xf=X^2$ and $\pi_k:\Xf\to X$ the canonical projection onto the $k$-th factor for $k=1, 2$. We denote by $\Delta$ the diagonal submanifold of $\Xf$. Let $\pi:\Xfh\to \Xf$ be the blow-up of $\Xf$ along $\Delta$. Let $\Pi_k: \Xfh\to X_k$ be defined by $\Pi_k:=\pi_k\circ \pi$ for $k=1, 2$. Note that $\Pi_1$ and $\Pi_2$ are submersions. Let $\widehat{\Delta}:=\pi^{-1}(\Delta)$ denote the exceptional divisor. Choose a real smooth closed $(1, 1)$-form $\widehat{\beta}$ on $\Xfh$ which is cohomologous to $[\widehat{\Delta}]$. Then, we can write $[\widehat{\Delta}]-\widehat{\beta}=dd^c\widehat{u}$. This equation implies that $\widehat{u}$ is smooth outside $\widehat{\Delta}$ and $\widehat{u}-\log \dist(\cdot, \widehat{\Delta})$ is a bounded function near $\widehat{\Delta}$. After subtraction of a proper constant, we may assume that $\widehat{u}$ is negative. By Blanchard's theorem, $\Xfh$ is a K\"ahler manifold and we fix a K\"ahler form $\widehat{\Omega}$. Recall the following function in \cite{DNV}.
	\begin{align*}
		\nu^X_S(\delta):=\sup_{R\in\widetilde{K}}\int_{\widehat{\Delta}_\delta}-\widehat{u}\widehat{\Omega}^{n-1}\wedge\Pi_1^*(S)\wedge \Pi_2^*(R). 
	\end{align*}
	By \cite[Proposition 2.7]{DNV}, $S$ admits continuous superpotentials if and only if $\lim_\delta\nu^X_S(\delta)=0$. As the continuity of superpotentials is described in terms of integrals, we can locally analyze it. Namely, we compare the integral $\int_{\widehat{\Delta}_\delta}-\widehat{u}\widehat{\Omega}^{n-1}\wedge\Pi_1^*(S)\wedge \Pi_2^*(R)$ with $\Fc_{S, j}^i(R)$ for $i=1, \ldots, n$ on $U_j^0$ for $j\in J$.\medskip
	
	Note that since $R$ is assumed to be smooth, the integrand in $-\widehat{u}\widehat{\Omega}^{n-1}\wedge\Pi_1^*(S)\wedge \Pi_2^*(R)$ and in $\Fc_{S, j}^i(R)$ for $i=1, \ldots, n$ and $j\in J$ has no mass on the exceptional divisor and on the diagonal submanifold, respectively.\medskip
	
	We compute in local coordinates. Let $(x, y)\in \C^n\times \C^n$. Let $(\C^n\times \C^n)_\bl$ be the blow-up of $\C^n\times \C^n$ along the diagonal submanifold $\Delta:=\{(x, x)\in \C^n\times \C^n\}$. The blow-up is a $n$-dimensional submanifold of $\C^{2n}\times \P^{n-1}$ and we let $(x, y, [w])\in \C^n\times \C^n\times \P^{n-1}$ denote the coordinates. Here, for $w\in\C^n\setminus \{0\}$, $[w]$ denotes the line $[w_1: \cdots: w_n]$ passing through the origin and $w$. The form $dd^c|x-y|^2 + dd^c\log|w|^2$ defines a natural K\"ahler form on $\C^n\times \C^n\times \P^{n-1}$.\medskip
	
	The blow-up $(\C^n\times \C^n)_\bl$ is the closure of the set $\{(x, y, [x-y])\in \C^n\times \C^n\times \P^{n-1}: (x, y)\in \C^n\times \C^n\setminus \Delta\}$. So, the restriction of $dd^c\left(|x|^2 +|x-y|^2\right)+dd^c\log|w|$ to $(\C^n\times\C^n)_\bl$ defines a natural K\"ahler form on $(\C^n\times\C^n)_\bl$, which is equivalent to $\widehat{\Omega}$. Outside the exceptional set $\widehat{\Omega}$, as $(\C^n\times \C^n)_\bl$ is biholomorphic to $\C^n\times\C^n$, it can be written as $\omega_\Xfh:=dd^c\left(|x|^2 +|x-y|^2\right)+dd^c\log|x-y|$ in terms of the coordiantes of $(x, y)\in \C^n\times \C^n$.\medskip
	
	We are actually interested in a deleted neighborhood of the diagonal submanifold as each $U_j^0$ is bounded. Without loss of generality, we may consider a sector $\{(x, y)\in \C^n\times\C^n, 0<|x_1-y_1|<1, |x_i-y_i|<2|x_1-y_1| \textrm{ for }i=2, \ldots, n\}$. For other sectors, we apply the same. We change coordinates
	\begin{align*}
		(x, v)\to (x, x_1+v_1, x_2+v_1v_2, \ldots, x_n+v_1v_n, [v])
	\end{align*}
	and use $(x, v)$, where $|v_1|<1$ and $|v_i|<2$ for $i=2, \ldots, n$. With respect to $(x, v)$, we have
	{\begin{align*}
			\omega_\Xfh=dd^c|x|^2+\left(1+\sum_{j=2}^n\left|v_j\right|^2\right)dd^c|v_1|^2+\left|v_1\right|^2\left(\sum_{j=2}^n dd^c\left|v_j\right|^2\right)+dd^c\log\left(1+\sum_{j=2}^n|v_j|^2\right)
	\end{align*}}
	On the other hand, $\pi$ defines a biholomorphism outside $\widehat{\Delta}$, with respect to the coordinates $(x, v)$, $\omega:=\pi_1^*\omega_\euc+\pi_1^*\omega_\euc$ becomes
	\begin{align*}
		\pi^*\omega=dd^c|x|^2+\left(1+\sum_{j=2}^n\left|v_j\right|^2\right)dd^c|v_1|^2+\left|v_1\right|^2\left(\sum_{j=2}^n dd^c\left|v_j\right|^2\right)
	\end{align*}
	and the form $dd^cu$ becomes
	\begin{align*}
		\pi^*\left(dd^cu\right)=dd^c\log\left(\left|v_1\right|^2\left(1+\sum_{j=2}^n\left|v_j\right|^2\right)\right)=dd^c\log\left(1+\sum_{j=2}^n\left|v_j\right|^2\right).
	\end{align*}
	Thanks to the positivity, we see that there exists a constant $c_1, c_2>1$ such that over the sector, we have
	\begin{align}\label{eq:equivalence_between_norms}
		\widehat{\Omega}^{n-1}\le c_1\omega_\Xfh^{n-1}\le c_2\pi^*\left(\sum_{k=0}^{n-1}\left(dd^cu\right)^k\wedge\omega^{n-k-1}\right).
	\end{align}
	Also, it is not difficult to see that in this region, there exists a constant $c>1$ such that
	\begin{align*}
		c^{-1}\dist(\pi(\cdot), \Delta)\le \dist(\cdot, \widehat{\Delta})\le c\dist(\pi(\cdot), \Delta).
	\end{align*}
	Indeed, we have
	\begin{align*}
		|v_1|^2\le |x-y|^2 = |v_1|^2\left(1+\sum_{i=2}^n|v_i|^2\right)\le 4n|v_1|^2 
	\end{align*}
	Due to the negativity and positivity of each current in the integrand of the integral and $\Fc_{S, j}^i(R)$ for $i=1, \ldots, n$ and $j\in J$, we see that the integral $\int_{\widehat{\Delta}_\delta}-\widehat{u}\widehat{\Omega}^{n-1}\wedge\Pi_1^*(S)\wedge \Pi_2^*(R)$ is equivalent to the sum of the functions $\Fc_{S, j}^i(R)$ for $i=1, \ldots, n$ on $U_j^0$ for each $j\in J$ and the integral dominates every $\Fc_{S, j}^i(R)$. This completes the proof.
\end{proof}


We consider the associativity of the Dinh-Sibony product on compact K\"ahler manifolds. 

\begin{theorem}
	Let $(X, \omega_X)$ be a compact K\"ahler manifold of dimension $n$. Let $S_1$, $S_2$ and $S_3$ be positive closed currents of bidegree $(s_1, s_1)$, $(s_2, s_2)$ and $(s_3, s_3)$, respectively. Suppose that $S_1$ admits continuous superpotential and $S_2$ and $S_3$ satisfy Condition (I). Then, the Dinh-Sibony product of $\big(S_1\wedge S_2\big)_{DS}$ and $S_3$, and the Dinh-Sibony product of $S_1$, $S_2$ and $S_3$ are well defined for each $j\in J$, and we have
	\begin{align*}
		\left(S_1\wedge (S_2\wedge S_3)_{DS}\right)_{DS}=\left((S_1\wedge S_2)_{DS}\wedge S_3\right)_{DS}=(S_1\wedge S_2\wedge S_3)_{DS}\textrm{ on }X.
	\end{align*} 
\end{theorem}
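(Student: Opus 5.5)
The plan is to reduce Theorem \ref{thm:associativity_cptK} to Theorem \ref{thm:associativity} by using Theorem \ref{thm:cptK} to turn the global hypothesis into the local one. Fix localizing data $\left(\left(U_j^i, \xi_j^i\right)\right)_{i=0,\ldots,n,\,j\in J}$ with $J$ finite, as at the start of this section. Since $S_1$ admits continuous superpotentials, Theorem \ref{thm:cptK} says that $S_1$ admits continuous local potential functionals on $K=\overline{\widetilde{\Cc_{n-s_1+1}(X)}}$. After rescaling $S_1$, which changes none of the statements, we may assume $S_1$ has unit mass.

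The difficulty is that Theorem \ref{thm:associativity} asks for continuity on the larger sets $K_j=\{R\in\Cc_{n-s_1+1}(U_j^1;U_j^0):\|R\|_{*,j}\le 1\}$. Still, in its proof the local potential functionals of $S_1$ are only evaluated at particular currents, and these all have a global form. They are $S_2\wedge\big[(\Tc_\theta\Kc_{\theta'})^{n+1,i}(S_3)-A(\theta_0,\theta,\theta')\big]$, and $S_2\wedge A(\theta_0,\theta,\theta')$ with $A$ smooth, and, in Lemma \ref{lem:hard_part}, the current $Z_{\theta,\theta'}$. I would therefore re-run the proofs of Propositions \ref{prop:1st_part} and \ref{prop:2nd_part} with one change: each local test current is replaced by a global positive closed current on $X$ that agrees with it on $U_j^n$ up to a smooth error.

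On a compact K\"ahler manifold this replacement is available without the cut-off of Lemma \ref{lem:DSHness}. The current $(\pi_2)_*\big(\pi_3^*S_3\wedge dd^c v_\theta^\Tmc\wedge\cdots\big)$ can be defined globally by replacing $v$ with the global quasi-potential $\widehat u$ of the diagonal from the proof of Theorem \ref{thm:cptK}. The result is closed modulo a smooth form cohomologous to $\widehat\beta$, so adding a fixed multiple of a power of $\omega_X$ makes it positive, closed and of bounded mass. Its wedge with $S_2$ is then, after normalization, a limit of smooth elements of $\Cc_{n-s_1+1}(X)$, hence lies in $K$. On $U_j^n$ it differs from the local current only by smooth forms, and those contribute convergent terms by Condition (I) for $S_1,S_2$. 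That condition holds by Proposition \ref{prop:ConditionI_under_conti} applied with $K$, since $S_2\wedge\omega_X^{n-s_1-s_2+1}\in K$ after scaling. With these substitutions, the uniform convergence of Proposition \ref{prop:unif_conv_poten_sup} and the estimate $\nu^n_{S_1,j}(\delta)\to0$ from Theorem \ref{thm:main_char} are used only on $K$, and this is exactly what continuity on $K$ provides.

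The steps are then the same as for Theorem \ref{thm:associativity}. Claims 1 and 2 give the existence of $\big((S_1\wedge S_2)_{DS}\wedge S_3\big)_{DS}$ on each $U_j^n$ together with its equality with $\big(S_1\wedge(S_2\wedge S_3)_{DS}\big)_{DS}$. Claims 3 and 4 give the existence of $(S_1\wedge S_2\wedge S_3)_{DS}$ and its equality with the same current. Because the $U_j^n$ cover $X$ and tangent currents and shadows are local (Remark \ref{rmk:independence_of_admissible_map} and Remark \ref{rmk:indep_shadow}), these local equalities combine to the equality on $X$. I expect the main obstacle to be checking that every auxiliary current really lies in $K$ with uniformly bounded mass. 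The hardest instance is $Z_{\theta,\theta'}$ in Lemma \ref{lem:hard_part}, where the added form $\Xi_k$ has to be globalized using $\widehat u$, and its mass has to be bounded uniformly in $k$ by means of cohomology instead of the boundary terms involving $dd^c\chi_j^n$.
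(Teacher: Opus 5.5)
\textbf{There is a genuine gap: the auxiliary test currents need not lie in $K$.}

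Your reduction rests on one claim. Every auxiliary test current (namely $S_2\wedge\omega_X^{n-s_1-s_2+1}$, and $S_2$ wedged with your globalized transform of $S_3$) is supposed to lie, after normalization, in $K=\overline{\widetilde{\Cc_{n-s_1+1}(X)}}$, so that the continuity on $K$ given by Theorem \ref{thm:cptK} applies. For a general $S_2$ this is false. On a compact K\"ahler manifold smooth positive closed forms are not dense in $\Cc_q(X)$, and wedging a non-approximable $S_2$ with a smooth positive closed form does not make it approximable.

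Here is a concrete counterexample. Let $X$ be the blow-up of $\P^4$ at a point, with exceptional divisor $D\simeq\P^3$. Take $s_1=2$, $s_2=s_3=1$, $S_2=[D]$ and $S_3=\omega_X$ smooth. Fix a smooth closed $(1,1)$-form $\beta_D$ cohomologous to $[D]$.
\begin{itemize}
\item For a smooth positive closed $(3,3)$-form $\alpha$, $\int_X\alpha\wedge\beta_D=\int_D\alpha\ge 0$. This pairing is cohomological and passes to weak limits of closed currents, so $\int_X R\wedge\beta_D\ge 0$ for every $R\in K$.
\item On the other hand, $\int_X[D]\wedge\omega_X^2\wedge\beta_D=\int_D\omega_X^2\wedge\beta_D<0$, because $\beta_D|_D$ represents $c_1(\mathcal{O}_{\P^3}(-1))$ and $\omega_X|_D$ is K\"ahler.
\end{itemize}
Hence $[D]\wedge\omega_X^2\notin K$. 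So continuity on $K$ justifies neither your derivation of Condition (I) for $S_1,S_2$ via Proposition \ref{prop:ConditionI_under_conti}, nor the evaluation of $\Fc^i_{S_1,j}$ at the currents $S_2\wedge[\cdots]$.

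\textbf{How the paper handles this.} The paper's sketch is organized around exactly this issue.
\begin{itemize}
\item It first takes $S_2,S_3$ smooth in fixed cohomology classes. Then the relevant global currents are positive closed and can be regularized by smooth positive closed forms as in \cite{DS04}.
\item It applies \cite[Proposition 2.7]{DNV} to the global integrals \eqref{eq:cpt_compare1}--\eqref{eq:cpt_compare3} against $\widehat u$, with a modulus $M(\varepsilon)$ depending only on the classes.
\item It converts these bounds into local estimates, which give continuity of the local potential functionals of $S_1$ on the enlarged class $K_{\loc,j}$.
\item Only at the end does it pass to general $S_2,S_3$, through the decomposition $S=S^+-S^-$ of \cite{DS04} into positive closed currents that are limits of smooth positive closed forms. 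One way to carry this out is the domination $S_2\le S_2^+$, the other factors in the near-diagonal integrals having fixed sign.
\end{itemize}
Your plan needs this approximation-and-domination step, or an equivalent one.

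\textbf{A smaller inaccuracy.} Your globalized currents do not differ from the local ones on $U_j^n$ by forms that stay smooth and bounded uniformly in $\theta,\theta'$. The kernels built from $\widehat u$ and $\widehat\Omega$ and the local kernels $dd^cv_\theta^\Tmc\wedge\Kmc^{i-1}_{v,\theta'}\wedge\omega_{23}^{m-i}$ concentrate near $\Delta_{23}$ differently. The comparison has to go through two-sided domination such as \eqref{eq:equivalence_between_norms} and \eqref{eq:comparison_poten_hypersurf}, combined with the signs of the integrands. This is again how the paper proceeds.
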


For the proof, we introduce some more notations.
\begin{center}
	\begin{tikzpicture}
		\node		(buX12)				at (-3, 0) {$\widehat{\Xf_{12}}$};
		\node		(buX23)				at (3, 0) {$\widehat{\Xf_{23}}$};
		\node 		(X123) 				at (0, 0) {$X^3:=X_1\times X_2\times X_3$};
		\node      	(X23)               at (1.5, -1.5) 			{$\Xf_{23}:=X_2\times X_3$};
		\node       (X12)       		at (-1.5, -1.5) 			{$\Xf_{12}:=X_1\times X_2$};
		\node       (X1)      			at (-3, -3) 			{$X_1$};
		\node 		(X2)				at (0, -3)			{$X_2$};
		\node		(X3)				at (3, -3)			{$X_3$};
		\draw[->] (X123) -- (X12) node[midway, left] {$\pi_{12}$};
		\draw[->] (X123) -- (X23) node[midway, right] {$\pi_{23}$};
		\draw[->] (X12) -- (X1) node[midway, left] {$\pi^{12}_1$};
		\draw[->] (X23) -- (X2) node[midway] {$\pi^{23}_2\,\,\,\,\,\,\,\,$};
		\draw[->] (X12) -- (X2) node[midway, left] {$\pi^{12}_2$};
		\draw[->] (X23) -- (X3) node[midway, right] {$\pi^{23}_3$};
		\draw[->] (buX12) -- (X12) node[midway, left] {$\pi_{12}^\Delta$};
		\draw[->] (buX23) -- (X23) node[midway, right] {$\pi_{23}^\Delta$};
	\end{tikzpicture}
\end{center}
Let $\Delta_{12}$, $\Delta_{23}$ denote the diagonal submanifolds of $\Xf_{12}$, $\Xf_{23}$, respectively. Let $\pi_{12}^\Delta:\widehat{\Xf_{12}}\to \Xf_{12}$, $\pi_{23}^\Delta:\widehat{\Xf_{23}}\to \Xf_{23}$ denote the blow-ups of $\Xf_{12}$, $\Xf_{23}$ along $\Delta_{12}$, $\Delta_{23}$, respectively. We denote by $\widehat{\Delta_{12}}:=\left(\pi_{12}^\Delta\right)^{-1}(\Delta)$, $\widehat{\Delta_{23}}:=\left(\pi_{23}^\Delta\right)^{-1}(\Delta)$. We let $\alpha_{12}$, $\alpha_{23}$ be smooth closed $(1, 1)$-currents cohomologous to $[\widehat{\Delta_{12}}]$, $[\widehat{\Delta_{23}}]$, and $\hat u$, $\hat v$ q-psh functions such that $[\widehat{\Delta_{12}}]-\alpha_{12}=dd^c \hat u$, $[\widehat{\Delta_{23}}]-\alpha_{23}=dd^c \hat v$, respectively. Let $\widehat{\omega_{12}}$ and $\widehat{\omega_{23}}$ be K\"ahler form like $\widehat{\Omega}$ in the above on $\Xf_{12}$ and $\Xf_{23}$, respectively. For notational convenience, we write $\Pi_1^{12}:=\pi_1^{12}\circ \pi_{12}^\Delta$,  $\Pi_1^{12}:=\pi_2^{12}\circ \pi_{12}^\Delta$, $\Pi_2^{23}:=\pi_2^{23}\circ \pi_{23}^\Delta$,  $\Pi_3^{23}:=\pi_2^{23}\circ \pi_{23}^\Delta$. For $\theta\in \C^*$ with $|\theta|\ll 1$, we let $\hat u_\theta:=\chi({\hat u}-\log|\theta|)+\log|\theta|$ and $\hat v_\theta:=\chi({\hat v}-\log|\theta|)+\log|\theta|$.\medskip

As the proof goes in the same way as the proof of Theorem \ref{thm:associativity}, we rather give its sketch for a concise presentation.
\begin{proof}[Sketch of Proof]
	We explain the modification of the proof of Theorem \ref{thm:associativity}. On compact K\"ahler manifolds, the existence of tangent currents are guaranteed. Hence, different from the case of Theorem \ref{thm:associativity}, we only care about the $h$-dimension and the unique shadow of tangent currents. So, instead of the kernels of the form $u_\theta^\Tmc\wedge \Kmc_{\theta'}^{i-1}$, we only need to use $\Kmc_\theta^i$, which possibly makes computations simper. For example, Claim 1 changes to
	\begin{align*}
		J_i(k, \theta):=\int_{\Xf_{23}}\varphi \chi_k^\nu K_\theta^i\wedge \left(\pi_2^{23}\right)^*(S_1\wedge S_2)_{DS}\wedge \left(\pi_3^{23}\right)^*S_3\wedge \omega_{23}^{n-i}\to 0
	\end{align*}
	as $\theta\to 0$ and $k\to\infty$ for $i=0, \ldots, n-1$.\medskip
	
	We describe how the lemmas and propositions are to be modified. We replace $u^\Kmc_{(\cdot)}$, $v^\Kmc_{(\cdot)}$ in the definition of $\chi_k^u$, $\chi_k^v$ in Lemmas \ref{lem:cut_off_12} and \ref{lem:cut_off_23} by $\left(\pi_{12}^\Delta\right)_*{\hat u}_{(\cdot)}$, $\left(\pi_{12}^\Delta\right)_*{\hat v}_{(\cdot)}$, respectively. Note that $u=\log|x-y|$, $\left(\pi_{12}^\Delta\right)^*u-{\hat u}$ is a smooth function in a neighborhood of $\widehat{\Delta_{12}}\cap \left(\pi_{12}^\Delta\right)^{-1}\left(U_j^0\times U_j^0\right)$. Hence, for some constant $c_\pi>0$, we have
	\begin{align}
		\label{eq:comparison_poten_hypersurf}dd^c{\hat u}_\theta - c_\pi\widehat{\omega_{12}}\le dd^c\left(\pi_{23}^\Delta\right)^*u_\theta^\Kmc\le dd^c{\hat u}_\theta + c_\pi\widehat{\omega_{12}}
	\end{align}
	in a neighborhood of $\widehat{\Delta_{23}}$ on $\left(\pi_{23}^\Delta\right)^{-1}\left(U_j^0\times U_j^0\right)$. Note also that there exists a constant $c_\reg>0$ such that $dd^c{\hat u}_\theta \ge -c_\reg\widehat{\omega_{12}}$ for all $\theta$ with $|\theta|\ll 1$ as in \cite[Lemma 2.4.2]{DS10}. The same is true for $v$ and related functions. The main ingredients for the proof of Claim 1 through 4 in Theorem \ref{thm:associativity} are the limits of the following types:
	\begin{enumerate}
		\item for $i=0, \ldots, n-1$ and for $j=0, 1, \ldots, n$,
		\begin{align*}
			&\int_{\left(\Delta_{12}\right)_\varepsilon\setminus \Delta_{12}}u(dd^cu)^i\wedge\omega_{12}^{n-i-1}\wedge\left(\pi_1^{12}\right)^*S_1\wedge \left(\pi_2^{12}\right)^*\Big[S_2\\
			\notag&\quad\quad\quad\quad\quad\quad\quad\quad\quad\quad\quad\quad\quad\quad\quad\wedge \left(\pi_2^{23}\right)_*\Big[\left(\pi_3^{23}\right)^*S_3\wedge \Kmc_{v, \theta}^j\wedge \omega_{23}^{n-j+1}\Big]\Big]\to 0
		\end{align*}
		as $\theta \to 0$ and then $\varepsilon\to 0$. Here, by convention $\Kmc^0_{v, \theta}=1$.
		\item for $i=0, \ldots, n-1$ and for $j=0, 1, \ldots, n$,
		\begin{align*}
			&\int_{\Xf_{12}}u(dd^cu)^i\wedge\omega_{12}^{n-i-1}\wedge\left(\pi_1^{12}\right)^*S_1\wedge \left(\pi_2^{12}\right)^*\Big[ S_2\\
			\notag&\quad\quad\quad\quad\quad\quad\quad\quad\quad\quad\quad\quad\quad\quad\wedge \left(\pi_2^{23}\right)_*\Big[dd^c\left(\chi_k^\nu\right)^2\wedge\left(\pi_3^{23}\right)^*S_3\wedge \Kmc_{v, \theta}^j\wedge \omega_{23}^{n-j}\Big]\Big]\to 0
		\end{align*}
		as $\theta \to 0$ and then $k\to \infty$.
	\end{enumerate}
	For these estimates, we use the continuity property for $S_1$. For now, we assume that $S_2\in\Cc_{s_2}(X)$ and $S_3\in \Cc_{s_3}(X)$ be smooth in fixed coholomogy classes. Then, for all $\theta\in\C^*$ with $|\theta|\ll 1$, the currents
	\begin{align*}
		&\Big[S_2\wedge \left(\Pi_2^{23}\right)_*\Big[\left(\Pi_3^{23}\right)^*S_3\wedge (dd^c{\hat v_\theta}+c_\reg\widehat{\Delta_{23}})\wedge\widehat{\omega_{23}}^{n}\Big]\Big],\\
		&\Big[S_2\wedge \left(\Pi_2^{23}\right)_*\Big[dd^c\left(\chi_k^\nu\right)^2\wedge\left(\Pi_3^{23}\right)^*S_3\wedge (dd^c{\hat v_\theta}+c_\reg\widehat{\Delta_{23}})\wedge\widehat{\omega_{23}}^{n-1}\Big]\Big]\quad\textrm{ and }\\
		&\Big[S_2\wedge \left(\Pi_2^{23}\right)_*\Big[\left(\Pi_3^{23}\right)^*S_3\wedge\widehat{\omega_{23}}^{n+1}\Big]\Big]
	\end{align*}
	can be represented by $C^1$-positive closed forms. As in \cite{DS04}, they can be regularized by smooth positive closed forms with respect to the uniform norm. Since $S_1$ admits continuous superpotenitals, \cite[Proposition 2.7]{DNV} implies that there exists a function $M(\varepsilon)>0$ such that $\displaystyle \lim_{\varepsilon\to 0}M(\varepsilon)=0$ and
	\begin{align}
		\label{eq:cpt_compare1}0>&\int_{(\widehat{\Delta_{12}})_\varepsilon}{\hat u}\widehat{\omega_{12}}^{n-1}\wedge\left(\Pi_1^{12}\right)^*S_1\wedge \left(\Pi_2^{12}\right)^*\Big[S_2\\
		\notag&\quad\quad\quad\wedge \left(\Pi_2^{23}\right)_*\Big[\left(\Pi_3^{23}\right)^*S_3\wedge (dd^c{\hat v}_\theta+c_\reg\widehat{\Delta_{23}})\wedge\widehat{\omega_{23}}^{n}\Big]\Big]>-M(\varepsilon)
	\end{align}
	\begin{align}
		\label{eq:cpt_compare2}0>&\int_{(\widehat{\Delta_{12}})_\varepsilon}{\hat u}\widehat{\omega_{12}}^{n-1}\wedge\left(\Pi_1^{12}\right)^*S_1\wedge \left(\Pi_2^{12}\right)^*\Big[S_2\wedge \left(\Pi_2^{23}\right)_*\Big[dd^c\left(\chi_k^\nu\right)^2\\
		\notag&\quad\quad\quad\quad\quad\wedge\left(\Pi_3^{23}\right)^*S_3\wedge (dd^c{\hat v_\theta}+c_\reg\widehat{\Delta_{23}})\wedge\widehat{\omega_{23}}^{n-1}\Big]\Big]>-M(\varepsilon)
	\end{align}
	\noindent and
	\begin{align}
		\label{eq:cpt_compare3}0>&\int_{(\widehat{\Delta_{12}})_\varepsilon}{\hat u}\widehat{\omega_{12}}^{n-1}\wedge\left(\Pi_1^{12}\right)^*S_1\wedge \left(\Pi_2^{12}\right)^*\Big[S_2\wedge \left(\Pi_2^{23}\right)_*\Big[\left(\Pi_3^{23}\right)^*S_3\wedge\widehat{\omega_{23}}^{n+1}\Big]\Big]>-M(\varepsilon)
	\end{align}
	hold for all sufficiently small $\varepsilon>0$. The function $M(\varepsilon)$ may depend on the cohomology classes of $S_2$ and $S_3$.\medskip
	
	We explain how (1) can be deduced from \eqref{eq:cpt_compare1} and \eqref{eq:cpt_compare3}. One can do the same to (2) together with \eqref{eq:cpt_compare2} and \eqref{eq:cpt_compare3}.\medskip
	
	To this end, we extend the class $K$ to a locally defined class $K_{\loc, j}$ on $U_j^0$ by adding some locally defined positive closed currents and we show that local continuous potential functions of $S_1$ are continuous on $K_{\loc, j}$.\smallskip
	
	\noindent Let $m\in\{n, n+1\}$, $i\in\{1, \ldots, n\}$ and $\theta_0\in \C^*$ with $|\theta_0|\ll 1$. For $\theta\in\Cc^*$, we define the transform $\Kc_\theta^{m, i}\left(S_3\right):=\left(\pi_2^{23}\right)_*\Big[\left(\pi_3^{23}\right)^*S_3\wedge \Kmc_{v, \theta}^i\wedge \omega_{23}^{m-i}\Big]$ for $S_3\in \Cc_{s_3}(X)$. As in Lemma \ref{lem:DSHness}, we can find a smooth form $B^{m, i}(\theta_0, S_3)$ such that $\Kc_\theta^{m, i}(S_3)+B^{m, i}(\theta_0, S_3)$ is a smooth current in $\Cc_{s_3}(U_j^1; U_j^0)$ for all $\theta$ with $|\theta|<|\theta_0|$ and that $\|B^{m, i}(\theta_0, S_3)\|_\infty\le \|S_3\||\theta_0|^{-2i}$. 
	We define $K_{\loc, j}\subset \Cc_{s_2+s_3+1}(U_j^1; U_j^0)$ to be the closure of $\widetilde{\Cc_{s_2+s_3+1}(X)}\cup \Big\{S_2\wedge \Big(\Kc_\theta^{n+1, i}(S_3)+B^{n+1, i}(\theta_0, S_3)\Big): S_2\in \widetilde{\Cc_{s_2}(X)}, S_3\in \widetilde{\Cc_{s_3}(X)}, i=0, 1, \ldots, n \textrm{ and }\theta\in \C^* \textrm{ with }|\theta|<|\theta_0|\Big\}$. If we consider $S_2$ and $S_3$ with uniformly bounded mass, then $K_{\loc, j}$ also has uniformly bounded mass.
	\medskip	
	
	Now, we deduce (1) out of \eqref{eq:cpt_compare1} and \eqref{eq:cpt_compare3}. Since $S_1$ admits continuous superpotentials and $S_2$ and $S_3$ are assumed to be smooth, the integrands have no mass on the sets $\widehat {\Delta_{12}}$ and $\widehat{\Delta_{23}}$. Hence, there exists a constant $c_\Delta>0$ independent of $\varepsilon>0$ such that
	\begin{align*}
		&\int_{(\widehat{\Delta_{12}})_{c_\Delta\varepsilon}}{\hat u}\widehat{\omega_{12}}^{n-1}\wedge\left(\Pi_1^{12}\right)^*S_1\wedge \left(\Pi_2^{12}\right)^*\Big[S_2\wedge \left(\Pi_2^{23}\right)_*\Big[\left(\Pi_3^{23}\right)^*S_3\wedge (dd^c{\hat v}_\theta+c_\reg\widehat{\Delta_{23}})\wedge\widehat{\omega_{23}}^{n}\Big]\Big]\\
		&\le\int_{\left(\Delta_{12}\right)_\varepsilon\setminus \Delta_{12}}\chi_j^n\left(\pi_{12}^\Delta\right)_*{\hat u}\left(\left(\pi_{12}^\Delta\right)_*\widehat{\omega_{12}}\right)^{n-1}\wedge\left(\pi_1^{12}\right)^*S_1\wedge \left(\pi_2^{12}\right)^*\Big[S_2\\
		&\quad\quad\quad\quad\quad\quad\quad\quad\quad\quad\wedge \left(\pi_2^{23}\right)_*\Big[\chi_j^n\left(\pi_3^{23}\right)^*S_3\wedge \left(\pi_{23}^\Delta\right)_*(dd^c{\hat v}_\theta+c_\reg\widehat{\Delta_{23}})\wedge\left(\left(\pi_{23}^\Delta\right)_*\widehat{\omega_{23}}\right)^{n}\Big]\Big].
	\end{align*}
	The same is true for \eqref{eq:cpt_compare3}.  Together with \eqref{eq:comparison_poten_hypersurf} and the estimates for \eqref{eq:cpt_compare3}, we utilize the relationship near \eqref{eq:equivalence_between_norms} in the proof of Theorem \ref{thm:cptK}. Note that we need the other direction but it can be done in a similar way. Then, we obtain estimates for
	\begin{align}
		\label{eq:first_local_estimate}&\int_{\left(\Delta_{12}\right)_\varepsilon\setminus \Delta_{12}}\chi_j^nu(dd^cu)^i\wedge\omega_{12}^{n-i-1}\wedge\left(\pi_1^{12}\right)^*S_1\wedge \left(\pi_2^{12}\right)^*\Big[S_2\\
		\notag&\quad\quad\quad\quad\quad\quad\quad\quad\quad\quad\quad\quad\wedge \left(\pi_2^{23}\right)_*\Big[\left(\pi_3^{23}\right)^*S_3\wedge (dd^cv_\theta^\Kmc)\wedge(dd^c v)^j\wedge \omega_{23}^{n-j}\Big]\Big]
	\end{align}
	when $0\le i, j\le n-1$. When, the mass of smooth currents $S_2$ and $S_3$ are uniformly bounded and $0<|\theta|\ll 1$, the integral uniformly converges to $0$ with respect to $S_2$, $S_3$, $\theta$ as $\varepsilon\to 0$ as desired. In particular, we can deduce that the local potential functions are continuous on $K_{\loc, j}$, which is also used in the proof of Claim 1 through Claim 4.\medskip
	
	For general positive closed currents $S_2\in \Cc_{s_2}(X)$ and $S_3\in \Cc_{s_3}(X)$. According to \cite{DS04}, we can write $S_2=S_2^+-S_2^-$ and $S_3=S_3^+-S_3^-$, where all $S_2^\pm\in \Cc_{s_2}(X)$, $S_3^\pm\in \Cc_{s_3}(X)$ can be approximated by currents in $\widetilde{\Cc_{s_2}(X)}$, $\widetilde{\Cc_{s_3}(X)}$, respectively. The aforementioned continuity for $S_2^\pm\in \Cc_{s_2}(X)$, $S_3^\pm\in \Cc_{s_3}(X)$, we obtain the estimates for general $S_2\in \Cc_{s_2}(X)$ and $S_3\in \Cc_{s_3}(X)$ and related continuity properties.
	\medskip
	
	In a similar way, we can apply the above arguments to (2) with \eqref{eq:cpt_compare2} and \eqref{eq:cpt_compare3}. With (1), (2) and related continuity properties, we can modify all the proofs of Claim 1 through Claim 4.
\end{proof}

\section{The continuity of the Dinh-Sibony product}\label{sec:conti_DS}
Applying the same method as in the proof of Propositions \ref{prop:char_conti}, \ref{prop:unif_conv_poten_sup} and \ref{prop:uniform_classical_prod}, we obtain a sufficient condition on the continuity of the Dinh-Sibony product as follows:                   

\begin{theorem}\label{thm:coni_DS_prod}
	Let $X$ be a complex manifold of dimension $n$. Let $S$ be a positive closed $(p, p)$-current on $X$. Let $\left(\left(U_j^i, \xi_j^i\right)\right)_{i=0, 1, \ldots, n, j\in J}$ be localizing data. Let $R$, $(R_k)_{k\in \N}$ be positive closed $(r, r)$-currents on $X$ satisfying Condition (I) with $S$ on $X$, where $1\le r\le n-s$. For each $j\in J$ and $0<\varepsilon_0\ll 1$, we define the function $\mu_j:(0, \varepsilon_0)\to \R_{\ge 0}$ by
	\begin{align*}
		\mu_j(\varepsilon)=\sup_{k\in\N}\int_{\Delta_\varepsilon\setminus \Delta}-\chi_j^nu(dd^cu)^{n-1}\pi_1^*S\wedge\pi_2^*R_k \wedge \omega^{n-s-r+1}
	\end{align*}
	Suppose that $\displaystyle \lim_{k\to\infty}R_k=R$ and for each $j\in J$, we have $\displaystyle \lim_{\varepsilon\to 0}\mu_j(\varepsilon)=0$. Then, we have $(S\wedge R_k)_{DS}\to (S\wedge R)_{DS}$ in the sense of currents as $k\to \infty$.
\end{theorem}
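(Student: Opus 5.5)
The plan is to compute the Dinh-Sibony product through the local representation used in Claim 2, namely \cite[Theorem 1.1]{Ahn25}. Under Condition (I), for a test form $\phi$ with compact support in $U_j^n$ and $\Phi=\chi_j^n\big(\pi_2\big)^*\phi$ (or with the roles of the factors arranged as in Claim 2), we have
\begin{align*}
\big\langle (S\wedge R_k)_{DS},\phi\big\rangle=\int u(dd^cu)^{n-1}\wedge\pi_1^*S\wedge\pi_2^*R_k\wedge dd^c\Phi ,
\end{align*}
and the same formula holds for $R$. By a partition of unity subordinate to $(U_j^n)_{j\in J}$, it suffices to prove convergence of these integrals for each fixed $j$ and each such $\phi$. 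We write $dd^c\Phi$ as a difference of smooth positive forms, each dominated by a constant times $\omega^{n-s-r+1}$ on the support of $\chi_j^n$. This reduces the problem to controlling integrals of the form $\int \chi_j^n u(dd^cu)^{n-1}\wedge\pi_1^*S\wedge\pi_2^*R_k\wedge\beta$ with $\beta$ smooth and positive.

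Next we split each integral with the cut-off $\chi_{\Delta,\varepsilon}$ from the proof of Proposition \ref{prop:char_conti}. Away from $\Delta$, the integrand involves $u(dd^cu)^{n-1}$, which is smooth there. The part $\int\chi_{\Delta,\varepsilon}\chi_j^n u(dd^cu)^{n-1}\wedge\pi_1^*S\wedge\pi_2^*R_k\wedge\beta$ then converges to the corresponding integral for $R$ as $k\to\infty$. This follows from $R_k\to R$, which gives $\pi_1^*S\wedge\pi_2^*R_k\to\pi_1^*S\wedge\pi_2^*R$ off $\Delta$ since $S$ is fixed and tensor products are continuous. Near $\Delta$, positivity of $-u$ and the definition of $\mu_j$ bound the remainder by $C\mu_j(\varepsilon)$ uniformly in $k$. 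The same bound holds for $R$ by lower semicontinuity (Fatou along the weak limit, as in Proposition \ref{prop:usc_local}). Alternatively, we can bound the remainder for $R$ directly by $\mu_j(\varepsilon)$ after passing to the limit on $\{\varepsilon'<|x-y|<\varepsilon\}$ and letting $\varepsilon'\to0$, using that the measure for $R$ has no mass on $\Delta$ under Condition (I).

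We then let $k\to\infty$ and afterwards $\varepsilon\to0$. This gives
\begin{align*}
\limsup_k\big|\langle (S\wedge R_k)_{DS}-(S\wedge R)_{DS},\phi\rangle\big|\le 2C\mu_j(\varepsilon)\to 0 .
\end{align*}
Here we use that $\mu_j$ only involves bidegree $(n-1)$ powers of $dd^cu$. This suffices because the representation formula of \cite{Ahn25} for the shadow uses exactly the top-order term $u(dd^cu)^{n-1}$.

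The main obstacle is making sure the representation formula applies with the specific weight $\omega^{n-s-r+1}$ and cut-off $\chi_j^n$ appearing in $\mu_j$. In particular, we must check that the integrals are taken over $\Uf_j^{n-1}\setminus\Delta$ with no mass of $\langle(dd^cu)^{n-1}\wedge\pi_1^*S\wedge\pi_2^*R\rangle_C$ on $\Delta$, so that the pointwise expression off $\Delta$ can be used. If the $\mu_j$-control of the integrals for the limit $R$ near $\Delta$ fails via semicontinuity, the first alternative is to approximate using $u_\theta^\Kmc$ and the uniform-in-$k$ convergence argument of Lemma \ref{lem:conti_conti}, exchanging the limits in $\theta$ and $k$.
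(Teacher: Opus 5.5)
Your argument is correct and is essentially the route the paper indicates. It uses the splitting by the cut-off $\chi_{\Delta,\varepsilon}$ from the proof of Proposition \ref{prop:char_conti}: weak convergence of $\pi_1^*S\wedge\pi_2^*R_k$ away from $\Delta$, the uniform bound $C\mu_j(\varepsilon)$ near $\Delta$, and lower semicontinuity of mass on the open set $\Delta_\varepsilon\setminus\Delta$ to control the limit $R$. The only difference is cosmetic: you work directly with the representation formula of \cite{Ahn25} (as in Claim 2) instead of passing through the uniform-in-$k$ convergence of the $\Kmc_\theta^n$-regularizations as $|\theta|\to 0$, so no exchange of limits is needed.
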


\bibliographystyle{plain}
\bibliography{intersection}

\end{document}